\documentclass[12pt]{article}
\usepackage{graphicx} 
\usepackage{amsmath}
\usepackage{amssymb}
\usepackage{amsthm}
\usepackage{mathtools}
\usepackage{hyperref}
\newcommand{\UkjT}{U(\boldsymbol{k},j,\mathbb T)}
\newcommand{\LinftyUkjT}{L^\infty(\mathbb B^n)^{\UkjT}}
\newcommand{\Tkqrph}{\mathcal T_{k\text{-qr,ph}}}
\newcommand{\Tkqr}{\mathcal T_{k\text{-qr}}}

\newcommand{\Tkqrcj}{\mathcal T_{k\text{-qr,}c_j}}
\newcommand{\Jmuj}{\mathcal{J}_{\mu,j}}
\newcommand{\LinftydT}{L^{\infty}(\mathbb{B}^d)^{\mathbb{T}}}
\newcommand{\contdT}{\mathcal{C}_{\bB^d}^{\bT}}
\newcommand{\Dkqrcj}{\mathcal{D}_{k\text{-qr},c_j}}
\newcommand{\Dkqrph}{\mathcal{D}_{k\text{-qr,ph}}}

\newcommand{\bB}{\mathbb{B}}
\newcommand{\bN}{\mathbb{N}}
\newcommand{\bT}{\mathbb{T}}
\newcommand{\bC}{\mathbb{C}}

\newcommand{\ka}{\kappa}

\newcommand{\cA}{\mathcal{A}}
\newcommand{\cT}{\mathcal{T}}
\newcommand{\cB}{\mathcal{B}}
\newcommand{\cJ}{\mathcal{J}}
\newcommand{\cP}{\mathcal{P}}

\newcommand{\cL}{\mathcal{L}}
\newcommand{\cK}{\mathcal{K}}
\newcommand{\cD}{\mathcal{D}}

\newcommand{\bdB}{\mathbf{B}}
\newcommand{\bdk}{\boldsymbol{k}}
\newcommand{\bdS}{\boldsymbol{S}}
\newcommand{\bdv}{\boldsymbol{v}}
\newcommand{\bdsig}{\boldsymbol{\sigma}}
\newcommand{\bdT}{\boldsymbol{T}}
\newcommand{\bde}{\boldsymbol{e}}
\newcommand{\bdH}{\boldsymbol{H}}
\newcommand{\bdr}{\boldsymbol{r}}
\newcommand{\bdxi}{\boldsymbol{\xi}}
\newcommand{\bdone}{\boldsymbol{1}}
\newcommand{\bdt}{\boldsymbol{t}}
\newcommand{\bdV}{\boldsymbol{V}}

\newcommand{\pconv}[1]{\operatorname{hull}_{\operatorname{pol}}(#1)}

\newcommand{\Tph}{\mathcal T_{\textup{ph}}}

\newcommand{\LinftyUk}{L^\infty(\bB^n)^{U(\bdk)}}
\newcommand{\LinftyTm}{L^\infty(\bB^n)^{\bT^m}}
\newcommand{\LinftyTmpol}{L^\infty(\bdB^{\bdk})^{\bT^m}}
\newcommand{\LinftyjT}{L^\infty(\bB^{k_j})^{\bT}}

\newcommand{\re}{\operatorname{Re}}

\newcommand{\contTm}{\mathcal{C}^{\bT^m}}

\usepackage[
  backend=biber,
  style=numeric-comp,
  sorting=nyt,
  giveninits=true
]{biblatex}
\AtBeginBibliography{\raggedright}

\newtheorem{thm}{Theorem}[section]
\newtheorem{prop}[thm]{Proposition}
\newtheorem{lem}[thm]{Lemma}
\newtheorem{cor}[thm]{Corollary}

\theoremstyle{definition}
\newtheorem{rem}[thm]{Remark}
\newtheorem{ex}[thm]{Example}

\title{A localization framework for $\bT^m$-Invariant Toeplitz Algebras:\\
Applications to Gelfand Theory}
\author{Miguel Angel Rodriguez Rodriguez}
\date{September 2026}

\begin{document}

\maketitle

\begin{center}
\emph{To the memory of Nikolai Vasilevski}
\end{center}

\begin{abstract}
We study $\mathbb T^m$-invariant Toeplitz operators on weighted Bergman spaces over the unit ball. A natural unitary transformation represents each such operator as a direct sum of restrictions of Toeplitz operators acting on a polyball. Under an $L^\infty$-valued angular continuity assumption, the corresponding symbols extend from $\mathbb N_0^m$ to a norm-continuous family indexed by the maximal ideal space of the $C^*$-algebra generated by $k$-quasi-radial Toeplitz operators. This extension yields a localization framework for Toeplitz operator algebras. As an application, we consider commutative Banach algebras obtained by adjoining Toeplitz operators whose symbols are invariant under mixed unitary and circle actions. We identify their local quotient algebras on finite-coordinate strata and at infinity and use these identifications to describe their maximal ideal spaces and Gelfand transforms explicitly.
\end{abstract}

\medskip
\noindent\textbf{2020 Mathematics Subject Classification.}
Primary 47B35; Secondary 47L80, 46J20, 32A36.

\smallskip
\noindent\textbf{Keywords.}
Toeplitz operators; weighted Bergman spaces; commutative Banach algebras; maximal ideal spaces; Gelfand transform; localization.

\section{Introduction}

Let $\lambda>-1$. The weighted Bergman space
$\cA^2_\lambda(\bB^n)$ consists of the holomorphic functions on the unit ball
$\bB^n\subset\bC^n$ that are square integrable with respect to the weighted
Lebesgue measure defined in \eqref{eq:def-vdlambda}. For
$\varphi\in L^\infty(\bB^n)$, the Toeplitz operator with symbol $\varphi$ is
\[
T_\varphi f=P_{\cA^2_\lambda(\bB^n)}(\varphi f),
\qquad f\in\cA^2_\lambda(\bB^n),
\]
where $P_{\cA^2_\lambda(\bB^n)}$ is the Bergman projection.

Symmetries of the underlying domain are often reflected in the structure of
the corresponding Toeplitz operator algebras, i.e., operator algebras generated by Toeplitz operators. In one complex variable,
Grudsky, Quiroga-Barranco and Vasilevski showed that, under suitable hypotheses, certain Toeplitz
$C^*$-algebras on the unit disk are commutative precisely when their symbols
are invariant under a maximal abelian group of M\"obius transformations; see
\cite{GrudskyQuirogaVasilveski2006}.

Higher-dimensional analogues of the one-dimensional result have produced
several families of commutative Toeplitz algebras on the unit ball, including
naturally occurring commutative Banach algebras for which the $C^*$-algebras generated by
the same operators are noncommutative.
A broad class of such Banach algebras is obtained by starting with the
commutative $C^*$-algebra $\Tkqr$ generated by Toeplitz operators with so-called
$k$-quasi-radial symbols (see Section \ref{sec:U(k)-and-Tm-invariant-operators}) and adjoining non-normal Toeplitz operators
$T_{c_j}$, $j=1,\ldots,m$, whose symbols are invariant under suitable mixed
unitary and circle actions. We denote these algebras by
\[
\Tkqrph=\cB(\Tkqr,T_{c_1},\ldots,T_{c_m}),
\]
where $\cB$ denotes the corresponding generated unital Banach algebra.

These Banach algebras have been extensively studied (see
\cite{BauerVasilevski2012,GarciaVasilevski2015,Quiroga_Sanchez_2021,RodriguezVasilevski2021,Bauer_Rodriguez2022}
and the references therein). The largest class of symbols $c_j$ known to
produce such algebras was identified in \cite{Quiroga2021}. Nevertheless,
despite partial results in \cite{RodriguezRodriguezPHD2024}, their maximal ideal
space and Gelfand transform
had not been characterized.

This paper has two main contributions. First, we introduce a localization
framework for $\bT^m$-invariant Toeplitz operators on
$\cA^2_\lambda(\bB^n)$. A natural unitary transformation represents each such
operator as a direct sum of restrictions of Toeplitz operators acting on a
Bergman space over a polyball. The resulting family is initially indexed by
$\bN_0^m$ and, under an $L^\infty$-valued angular continuity assumption,
extends continuously to the maximal ideal space $M(\Tkqr)$. Each
$\mu\in M(\Tkqr)$ then determines a local quotient algebra, so the global
operator algebra can be analyzed through finite-coordinate and asymptotic
localizations. This construction is related in spirit to the local principles
of Allan and Douglas--Varela; see
\cite{Rabinovich2004LimitOperators,Vasilevski2008}.

The framework is one of the principal novelties of the paper. Its
operator decomposition and continuity results do not require commutativity.
Hence, extensions to noncommutative $C^*$-algebras
and other group actions, including variants of the $\bT^m$-actions studied in
\cite{Vasilevski2018}, are natural directions for further work.

Our first main result, Theorem~\ref{thm:Tna=oplus-T-polyball}, gives the
polyball decomposition. Proposition~\ref{prop:continuity-wildeamu} and
Corollaries~\ref{cor:uniform-approx-aka-amu}--\ref{cor:Tmu-to-Tkappaalpha}
establish the continuity of the extended symbols
$\widetilde c_\mu$, $\mu\in M(\Tkqr)$, and their Toeplitz operators under the
stated angular continuity assumption. Theorems~\ref{thm:local-alg-finite}
and~\ref{thm:local-alg-infinite} then identify the local quotients of
\[
\Tkqrcj=\cB(\Tkqr,T_{c_j}).
\]
If the $j$th coordinate of the stratum containing $\mu$ is finite, the local
quotient is isometrically isomorphic to the Banach algebra generated by the
finite-dimensional restriction
\[
T_{k_j,\widehat c_{j,\mu}}\big|_{H_{k_j,\omega_j}}.
\]
If that coordinate is infinite, it is isometrically isomorphic to the algebra
generated in the Calkin algebra by
\[
[T_{k_j,\widehat c_{j,\mu}}]_{\cK}.
\]

As our second main contribution, we use these local identifications to obtain the previously missing
Gelfand theory of $\Tkqrph$. For compact sets $D_j(\mu,c_j)\subset\bC$
determined by finite-dimensional spectra or by the boundary values of
$\widehat c_{j,\mu}$, Theorem~\ref{thm:Gelfand theory Tkqrph} proves that
\[
M(\Tkqrph)
\cong
\bigcup_{\mu\in M(\Tkqr)}
\{\mu\}\times\pconv{D_1(\mu,c_1)}\times\cdots\times
\pconv{D_m(\mu,c_m)},
\]
and gives an explicit formula for the Gelfand transform. This characterization
settles, under the stated continuity assumptions, the problem that had
remained open for this class of algebras.

The paper is organized as follows. Section~\ref{sec:preliminaries} reviews
weighted Bergman spaces, polyballs and spherical-coordinate integration.
Section~\ref{sec:Tm-inv-Toep-op} develops the operator-family and localization
framework. Section~\ref{sec:Operator algebras} identifies the local quotient
algebras and applies them to the maximal ideal space and Gelfand transform of
$\Tkqrph$.

\section{Preliminaries}
\label{sec:preliminaries}

\subsection{Bergman spaces on the unit ball}

The main operators considered in this paper act on the Bergman space $\cA^2_\lambda(\bB^n)$ for a fixed $n\in\bN$. Since we also use Bergman spaces on balls of other dimensions, we begin with the general case $d\in\bN$.

For $d\in \bN$, we denote by $\cA^2_\lambda(\bB^d)$ the weighted Bergman space on the unit ball $\bB^d$ of $\bC^d$ with weight parameter $\lambda>-1$, defined as the space of all holomorphic functions on $\bB^d$ that are square integrable with respect to the measure
\begin{equation}
    \label{eq:def-vdlambda}
    dv_{d,\lambda}(z)=c_{d,\lambda}(1-|z|^2)^\lambda dV_d(z),
\end{equation}
where $V_d$ denotes Lebesgue measure restricted to $\bB^d$, and $c_{d,\lambda}$ is the normalization constant
\[
c_{d,\lambda}=\frac{\Gamma(d+\lambda+1)}{\pi^d \Gamma(\lambda+1)}.
\]
When $\lambda=0$, we simply write $\cA^2(\bB^d)\coloneqq\cA^2_0(\bB^d)$, $c_d\coloneqq c_{d,0}$ and $dv_d\coloneqq c_d dV_d$.
As is well known, $\cA_\lambda^2(\bB^d)$ is a reproducing kernel Hilbert space and a closed subspace of $L^2(\bB^d,dv_{d,\lambda})$. We denote by $P_{\cA^2_\lambda(\bB^d)}$ the orthogonal projection from $L^2(\bB^d,dv_{d,\lambda})$ onto $\cA^2_\lambda(\bB^d)$.

Let $S^{2d-1}$ denote the unit sphere in $\bC^d$ and let $\sigma_d$ denote the standard, unnormalized surface measure on $S^{2d-1}$. Thus, $\sigma_d(S^{2d-1})=\frac{2\pi^d}{(d-1)!}$.

We write $z\in \bB^d$ in spherical coordinates as $z=r\xi$ with $r\in [0,1)$ and $\xi\in S^{2d-1}$.
The spherical-coordinate formula (see, for example, \cite{Zhu2005SpacesHolomorphic}) states that, for $f\in L^1(\bB^d,V_d)$,
\begin{equation}
    \label{eq:measures-polar-coordinates}
    \int_{\bB^d}f(z)dV_d(z)=
\int_0^1 r^{2d-1}dr
\int_{S^{2d-1}}f(r\xi)d\sigma_d(\xi).
\end{equation}
For $\alpha,\beta\in \bN_0^d$, one also has
\begin{equation}
    \label{eq:int-z-alpha-conj-z-beta-dvlambda}
    \int_{\bB^d}z^\alpha \overline{z^\beta}dv_{d,\lambda}(z)
=
\delta_{\alpha,\beta}
\frac{\alpha!\Gamma(d+\lambda+1)}{\Gamma(d+|\alpha|+\lambda+1)},
\end{equation}
where we use the multi-index notation
\[
\alpha!=\alpha_1!\cdots\alpha_d!,\qquad
|\alpha|=\alpha_1+\cdots+\alpha_d,
\qquad
z^\alpha=z_1^{\alpha_1}\cdots z_d^{\alpha_d}.
\]
Indeed, the family $(e_{d,\alpha}^{(\lambda)})_{\alpha\in\bN_0^d}$ defined by
\[
e_{d,\alpha}^{(\lambda)}(z)=\sqrt{
\frac{\Gamma(d+|\alpha|+\lambda+1)}{\alpha!\Gamma(d+\lambda+1)}
}z^\alpha,\qquad z\in \bB^d,\ \alpha\in\bN_0^d,
\]
is an orthonormal basis of $\cA^2_\lambda(\bB^d)$. In particular, $e_{d,\alpha}^{(0)}$ denotes the corresponding normalized monomial in the unweighted space $\cA^2(\bB^d)$.

We will also need the analogous orthogonality relation on the sphere:
\begin{equation}
    \label{eq:int_S-zalpha-zbeta}
\int_{S^{2d-1}}\xi^\alpha\overline{\xi^\beta}d\sigma_d(\xi)=
\delta_{\alpha,\beta}\frac{2\pi^d\alpha!}{(d-1+|\alpha|)!}.
\end{equation}

For $\varphi\in L^\infty(\bB^d)$, the Toeplitz operator $T_{d,\varphi}$ with symbol $\varphi$ is defined by
\[
T_{d,\varphi}(f)=P_{\cA^2_\lambda(\bB^d)}(\varphi\cdot f),\qquad f\in \cA^2_\lambda(\bB^d).
\]
Whenever the dimension $d$ is clear from the context, we simply write $T_\varphi$ instead of $T_{d,\varphi}$.

Let $U(d)$ denote the group of $d\times d$ unitary matrices. This group acts naturally on functions $f\colon \bB^d\to\bC$ by
\[
A\cdot f(z)=f(A^{-1}z),\qquad z\in \bB^d,\ A\in U(d).
\]
Restricted to the Bergman space, this action yields a unitary representation
\[
\pi_{U(d)}\colon U(d) \to U(\cA^2_\lambda(\bB^d)),\quad
\pi_{U(d)}(A)(f)=A\cdot f,\ f\in \cA^2_\lambda(\bB^d),
\]
where $U(\cA^2_\lambda(\bB^d))$ denotes the group of unitary operators acting on $\cA^2_\lambda(\bB^d)$.
A direct computation shows that, for $\varphi\in L^\infty(\mathbb B^d)$,
\[
\pi_{U(d)}(A)T_\varphi \pi_{U(d)}(A)^*=T_{A\cdot \varphi}.
\]
Given a subgroup $G\subset U(d)$, let $\pi_G$ denote the restriction of $\pi_{U(d)}$ to $G$; thus,
$\pi_G(A)f=A\cdot f$ for $A\in G$ and $f\in \cA^2_\lambda(\bB^d)$.
A Toeplitz operator $T_\varphi$ commutes with $\pi_G$ if and only if its symbol $\varphi$ is $G$-invariant.

\subsection{Grouping of tuples and Bergman spaces on polyballs}

Let $n,m\in\bN$ with $m\leq n$, and let $k_1,\ldots,k_m\in \bN$ satisfy
$n=k_1+\cdots+k_m$. Throughout the paper, we partition each $n$-tuple
$u=(u_1,\ldots,u_n)$ into an $m$-tuple
$u=(u_{(1)},\ldots,u_{(m)})$, whose $j$-th component is a $k_j$-tuple. Thus,
\begin{align*}
u_{(1)}&=(u_1,u_2,\ldots,u_{k_1}),\\
u_{(2)}&=(u_{k_1+1},\ldots,u_{k_1+k_2}),\\
&\ \vdots\\
u_{(m)}&=(u_{k_1+\cdots+k_{m-1}+1},\ldots,u_{n}).
\end{align*}
For example, multi-indices $\alpha\in\bN_0^n$ will be written as
\[
\alpha=(\alpha_{(1)},\ldots,\alpha_{(m)}),
\]
where $\alpha_{(j)}\in \bN_0^{k_j}$, $j=1,\ldots,m$.

\medskip

For each $j\in\{1,\ldots,m\}$, consider the open ball $\bB^{k_j}$ in $\bC^{k_j}$.
We set
\begin{equation*}
    \bdB^{\bdk}=\bB^{k_1}\times\cdots\times\bB^{k_m},\quad
    \bdS^{\bdk}=
    S^{2k_1-1}\times\cdots\times S^{2k_m-1}.
\end{equation*}
We also consider the corresponding product measures:
\begin{align*}
    \bdV_{\bdk}&=V_{k_1}\times\cdots\times V_{k_m},\\
    \bdv_{\bdk}&=v_{k_1}\times\cdots\times v_{k_m},\\
    \bdsig_{\bdk}&=\sigma_{k_1}\times\cdots\times \sigma_{k_m}.
\end{align*}
Note that the measure $\bdv_{\bdk}$ is constructed from the unweighted measures. The construction readily extends to a product measure with different weight parameters on each factor, but we use the unweighted version for simplicity.

We denote by $\tau(\bB^m)$ the base of $\bB^m$ regarded as a Reinhardt domain. That is,
\[
\tau(\bB^m)=\big\{\bdr=(r_1,\ldots,r_m)\in [0,1]^m\colon r_1^2+\cdots+r_m^2<1\big\}.
\]
We will consider the usual Lebesgue measure $d\bdr=dr_1dr_2\cdots dr_m$ on $\tau(\bB^m)$.

Given $\bdr=(r_1,\ldots,r_m)\in [0,\infty)^m$ and $\bdxi=(\xi_1,\ldots,\xi_m)\in\bdS^{\bdk}$, we define
\[
\bdr\bdxi=(r_1\xi_1,\ldots,r_m\xi_m)\in \bC^n.
\]
For every $z=(z_{(1)},\ldots,z_{(m)})\in\bC^n$ with $|z_{(j)}|\neq 0$ for $j=1,\ldots,m$, we define $\bdr(z)\in [0,\infty)^m$ and $\bdxi(z)\in \bdS^{\bdk}$ by
\begin{equation}
    \label{eq:def-r(z)-xi(z)}
\bdr(z)\coloneqq(|z_{(1)}|,\ldots,|z_{(m)}|),\qquad \bdxi(z)\coloneqq\left(\frac{z_{(1)}}{|z_{(1)}|},\ldots,\frac{z_{(m)}}{|z_{(m)}|}\right),
\end{equation}
which yields $z=\bdr(z)\bdxi(z)$.
We will repeatedly use these coordinate identifications to represent functions. As is customary, in this paper, all identities involving $L^\infty$-symbols are understood almost everywhere. Whenever angular sections are used, we fix representatives on a common conull subset.

These coordinates yield the following natural identifications up to null sets:
\[
\bB^n\longleftrightarrow \tau(\bB^m)\times\bdS^{\bdk},\qquad
\bdB^{\bdk}\longleftrightarrow
[0,1)^m\times\bdS^{\bdk}.
\]
As above, we also use the shorthand notation
\[
\bdr^{p}=r_1^{p_1}\cdots r_m^{p_m},\qquad
\bdr\in\tau(\bB^m),\ p\in \bN_0^m.
\]
We also set
\[
|\bdr|_2^2=r_1^2+\cdots+r_m^2.
\]
Applying \eqref{eq:measures-polar-coordinates} to each component gives, for every $f\in L^1(\bB^n,dv_{n,\lambda})$,
\begin{equation}
    \label{eq:int-formula-m-polar-coord}
\int_{\bB^n}f(z)
dv_{n,\lambda}(z)
=
c_{n,\lambda}
\int_{\tau(\bB^m)\times\bdS^{\bdk}}
f(\bdr\bdxi)
(1-|\bdr|_2^2)^\lambda \bdr^{2\bdk-\bdone}
d\bdr\bdsig_{\bdk}(\bdxi),
\end{equation}
where $\bdone=(1,\ldots,1)\in\bN_0^m$.

Let $\cA^2(\bdB^{\bdk})$ be the Bergman space on the polyball $\bdB^{\bdk}$ defined as the space of all holomorphic functions that are square integrable with respect to the measure $\bdv_{\bdk}$.
Standard arguments show that $\cA^2(\bdB^{\bdk})$ is a reproducing kernel Hilbert space, embedded as a closed subspace in $L^2(\bdB^{\bdk},\bdv_{\bdk})$.

As before, let $P_{\cA^2(\bdB^{\bdk})}$ denote the orthogonal projection from $L^2(\bdB^{\bdk},\bdv_{\bdk})$ onto $\cA^2(\bdB^{\bdk})$. For $\varphi\in L^\infty(\bdB^{\bdk})$, we denote by $\bdT_{\varphi}$ the Toeplitz operator
\[
\bdT_{\varphi}f=P_{\cA^2(\bdB^{\bdk})}(\varphi f),
\qquad f\in \cA^2(\bdB^{\bdk}).
\]

The space $\cA^2(\bdB^{\bdk})$ has the orthonormal basis $(\bde_{\bdk,\alpha}^{(0)})_{\alpha\in \bN_0^n}$ defined by
\[
\bde_{\bdk,\alpha}^{(0)}\coloneqq e_{k_1,\alpha_{(1)}}^{(0)}\otimes\cdots\otimes e_{k_m,\alpha_{(m)}}^{(0)}.
\]
That is,
\[
\bde_{\bdk,\alpha}^{(0)}(z)
=
e_{k_1,\alpha_{(1)}}^{(0)}(z_{(1)})\cdots
e_{k_m,\alpha_{(m)}}^{(0)}(z_{(m)}),
\]
where $z=(z_{(1)},\ldots,z_{(m)})\in \bdB^{\bdk}$ and $\alpha=(\alpha_{(1)},\ldots,\alpha_{(m)})\in \bN_0^{k_1}\times\cdots\times\bN_0^{k_m}$.
Consequently, we obtain the natural tensor-product identification
\[
\cA^2(\bdB^{\bdk})\cong
\cA^2(\bB^{k_1})\otimes\cdots\otimes\cA^2(\bB^{k_m}).
\]
Throughout the polyball constructions and local models below,
$T_{k_j,\varphi_j}$ denotes a Toeplitz operator on the unweighted space
$\cA^2(\bB^{k_j})$. The operators on the original ball
$\cA^2_\lambda(\bB^n)$ retain the fixed weight $\lambda$.
If $\varphi\in L^\infty(\bdB^{\bdk})$ is of the form $\varphi = \varphi_1\otimes\cdots\otimes\varphi_m$, where $\varphi_j\in L^\infty(\bB^{k_j})$, then
\[
\bdT_{\varphi}=T_{k_1,\varphi_1}\otimes\cdots\otimes T_{k_m,\varphi_m}.
\]

\subsection{Spaces of homogeneous polynomials}

For $\ka\in \mathbb N_0^m$, define the subspace $H_{n,\ka}\subset\cA^2_\lambda(\bB^n)$ by
\begin{equation}
    \label{eq:def-Hnka}
    H_{n,\kappa} = \operatorname{span}\{e_{n,\alpha}^{(\lambda)}\colon \alpha\in\bN_0^n,\  |\alpha_{(j)}|=\kappa_j,\ j=1,\ldots,m\},
\end{equation}
and the subspace $\bdH_{\bdk,\ka}\subset\cA^2(\bdB^{\bdk})$ by
\begin{equation}
    \label{eq:def-bold-Hkka}
    \bdH_{\bdk,\ka} =
    \operatorname{span}\{\bde_{\bdk,\alpha}^{(0)}\colon
    \alpha\in \bN_0^n,\ |\alpha_{(j)}|=\kappa_j,\ j=1,\ldots,m\}.
\end{equation}
There is a natural identification
\[
\bdH_{\bdk,\ka}\cong H_{k_1,\ka_1}\otimes\cdots\otimes H_{k_m,\ka_m},
\]
where for each $j\in\{1,\ldots,m\}$ and $d\in\bN_0$, the space $H_{k_j,d}$ is the analogous subspace of $\cA^2(\bB^{k_j})$ defined by
\[
H_{k_j,d}=
\operatorname{span}
\{e_{k_j,\alpha_{(j)}}^{(0)}\colon
\alpha_{(j)}\in\bN_0^{k_j},\ |\alpha_{(j)}|=d\}.
\]

Because the corresponding normalized monomials form orthonormal bases of the two Bergman spaces, we obtain
\[
\cA^2_\lambda(\bB^n)=
\bigoplus_{\ka\in\bN_0^m}
H_{n,\ka},\qquad
\cA^2(\bdB^{\bdk})
=
\bigoplus_{\ka\in\bN_0^m}
\bdH_{\bdk,\ka}.
\]
Consequently, there is a unique unitary operator
\begin{equation}
    \label{eq:def-op-U}
    U\colon \cA^2_\lambda(\bB^n)\longrightarrow \cA^2(\bdB^{\bdk})
\end{equation}
such that
\[
U(e_{n,\alpha}^{(\lambda)})=
\bde_{\bdk,\alpha}^{(0)},\qquad \alpha\in \bN_0^n.
\]
For each $d\in \bN_0$ and $j\in\{1,\ldots,m\}$, the space $H_{k_j,d}$ is naturally isomorphic to the space $\cP_d(\bC^{k_j})$ of homogeneous polynomials of degree $d$.
Therefore, for every
$\ka\in\bN_0^m$,
both spaces $H_{n,\ka}$ and $\bdH_{\bdk,\ka}$ are naturally isomorphic to the space $\bigotimes_{j=1}^m \cP_{\ka_j}(\bC^{k_j})$ of ``quasi-homogeneous'' polynomials.

\subsection{Bounded operators and operator algebras}

Throughout this paper, we work with operators and operator algebras defined on several spaces. We conclude this section by fixing notation and recalling a few standard facts. For further details, see, for example, \cite{Kaniuth,RodriguezRodriguezPHD2024}.

For a Hilbert space $H$, let $\cL(H)$ denote the $C^*$-algebra of bounded operators on $H$ and $\cK(H)$ its closed $*$-ideal of compact operators.
If $\cT\subset\cL(H)$ is a unital Banach algebra and $\cJ$ is a closed ideal of $\cT$, then $\cT/\cJ$ is again a unital Banach algebra. If $\cT$ is a $C^*$-algebra and $\cJ$ is a closed $*$-ideal, then the quotient is a $C^*$-algebra.
In particular, we define the Calkin algebra of $H$ as the quotient algebra $\cL(H)/\cK(H)$. Its elements are denoted by $[T]_\cK\coloneqq T+\cK(H)$, for $T\in\cL(H)$.

Given an operator $T\in \cL(H)$, we denote by $\operatorname{sp}(T)$ its usual spectrum with respect to the $C^*$-algebra $\cL(H)$. The spectrum of an element $[T]_\cK$ in the Calkin algebra is called the essential spectrum of $T$ and is denoted by $\operatorname{ess-sp}(T)$.

If $S$ is a subset of a unital Banach algebra $\cA$, let $\cB(S)$ denote the unital Banach algebra generated by $S$ in $\cA$. When $\cA$ is a $C^*$-algebra, $C^*(S)$ denotes the $C^*$-algebra generated by $S$. For a singleton $S=\{T\}$, we simply write $\cB(T)$ and $C^*(T)$.

We finally recall polynomial convexity, which is closely related to the theory of commutative Banach algebras.
If $K\subset\bC^u$ is compact, we define its polynomially convex hull $\pconv{K}$ as the set of all $z\in\bC^u$ such that, for every holomorphic polynomial $p$ in $u$ variables,
\[
|p(z)|\leq \sup_{w\in K}|p(w)|.
\]
A compact set $K$ is called polynomially convex if $K=\pconv{K}$.
If $T$ is an element of some unital Banach algebra $\cA$, then the maximal ideal space $M(\cB(T))$ of $\cB(T)$ is homeomorphic to the polynomially convex hull of its spectrum with respect to $\cA$:
\[
M(\cB(T))\cong\pconv{\operatorname{sp}_{\cA}(T)}.
\]
If $A_j\subset\bC^{d_j}$ is compact for $j=1,\ldots,L$, then
\[
\pconv{A_1\times\cdots\times A_L}
=
\pconv{A_1}\times\cdots\times \pconv{A_L}.
\]

\section{\texorpdfstring{$\mathbb T^m$-invariant operators}{Tm invariant operators}}
\label{sec:Tm-inv-Toep-op}

\subsection{\texorpdfstring{$U(\bdk)$- and $\bT^m$-invariant operators}{U(k)- and Tm-invariant operators}}
\label{sec:U(k)-and-Tm-invariant-operators}

In this section, we recall several facts about $k$-quasi-radial symbols and their associated Toeplitz operators.

Let $n\in \bN$ and consider the Bergman space $\cA^2_\lambda(\bB^n)$. As before, we let $m$ and $k_1,\ldots,k_m$ be positive integers with $n=k_1+\cdots+k_m$. We define $\bdk=(k_1,\ldots,k_m)$.
As in the preceding section, we partition the coordinates of $\mathbb C^n$ into $m$ blocks:
\[
z=(z_{(1)},z_{(2)},\ldots,z_{(m)})\in \mathbb C^{k_1}\times \cdots \times \mathbb C^{k_m}=\bC^n.
\]
With respect to this decomposition, consider the direct product of unitary groups
\[
U(\bdk)\coloneqq U(k_1)\times\cdots\times U(k_m),
\]
where, as before, $U(d)$ denotes the unitary group of $d\times d$ matrices acting on $\mathbb C^d$. Via block-diagonal matrices, the group $U(\bdk)$ is naturally identified with a subgroup of $U(n)$ acting on $\mathbb C^n$.

As is well known, for each $j\in \{1,\ldots,m\}$ the space of homogeneous polynomials of any fixed degree in $\bC^{k_j}$ is an irreducible module for the unitary group $U(k_j)$. Therefore, the space $\bigotimes_{j=1}^m \cP_{\ka_j}(\bC^{k_j})$ is an irreducible module for $\pi_{U(\bdk)}$.

Next, identify $\bT^m$ with the subgroup of $U(\bdk)$ consisting of the block-diagonal matrices
\begin{equation}
\label{eq:explicit_action_Tm}
\begin{pmatrix}
t_1I_{k_1} &        & \\
           & \ddots & \\
           &        & t_mI_{k_m}
\end{pmatrix},
\end{equation}
where $I_{k_j}$ is the $k_j\times k_j$ identity matrix for $j=1,\ldots,m$.

The isotypic decompositions of these representations yield the following proposition (see \cite{Quiroga2021}).
\begin{prop}
\label{prop:T-Uk-Tm-as-direct-sum}
    Let $T\in \cL(\cA^2_\lambda(\bB^n))$.
    If $T$ commutes with $\pi_{U(\bdk)}$ or with $\pi_{\bT^m}$, then $T$ leaves the spaces $H_{n,\ka}$ invariant for every $\ka\in\bN_0^m$ and
    \[
    T=\bigoplus_{\ka\in\bN_0^m}T|_{H_{n,\ka}}.
    \]
    Moreover, if $T$ commutes with $\pi_{U(\bdk)}$, then $T|_{H_{n,\ka}}$ is a multiple of the identity $I_{H_{n,\ka}}$ for every $\ka\in\bN_0^m$.
\end{prop}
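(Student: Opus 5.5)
The plan is to reduce everything to the $\bT^m$ case, since $\bT^m\subset U(\bdk)$: if $T$ commutes with $\pi_{U(\bdk)}$, it commutes in particular with $\pi_{\bT^m}$. So the first step is to show that any $T$ commuting with $\pi_{\bT^m}$ leaves each $H_{n,\ka}$ invariant. For $t=(t_1,\ldots,t_m)\in\bT^m$, let $A_t$ denote the block-diagonal matrix \eqref{eq:explicit_action_Tm}. A direct computation on the orthonormal basis gives
\[
\pi_{\bT^m}(t)e^{(\lambda)}_{n,\alpha}(z)=e^{(\lambda)}_{n,\alpha}(A_t^{-1}z)=\overline{t}^{\,\ka}\,e^{(\lambda)}_{n,\alpha}(z),
\qquad \ka_j=|\alpha_{(j)}|,
\]
where $\overline t^{\,\ka}=\overline{t_1}^{\ka_1}\cdots\overline{t_m}^{\ka_m}$. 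Thus $H_{n,\ka}$ is precisely the eigenspace of the representation for the character $\chi_\ka(t)=\overline t^{\,\ka}$, and distinct $\ka$ give distinct characters.

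To see invariance, I would introduce the projection
\[
P_\ka=\int_{\bT^m}\chi_\ka(t)^{-1}\,\pi_{\bT^m}(t)\,dt ,
\]
taken with respect to normalized Haar measure as a strongly convergent integral. Evaluating it on the basis and using orthogonality of characters shows that $P_\ka$ is the orthogonal projection onto $H_{n,\ka}$. If $T$ commutes with every $\pi_{\bT^m}(t)$, then it commutes with every $P_\ka$. Hence $T H_{n,\ka}\subset H_{n,\ka}$, and since $\sum_\ka P_\ka=I$ strongly, we get $T=\bigoplus_\ka T|_{H_{n,\ka}}$. Alternatively, one can avoid integrals: if $f\in H_{n,\ka}$, then $\pi(t)Tf=T\pi(t)f=\chi_\ka(t)Tf$ for all $t$, so each component $P_{\ka'}Tf$ is multiplied by both $\chi_{\ka'}(t)$ and $\chi_\ka(t)$. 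This forces $P_{\ka'}Tf=0$ for $\ka'\neq\ka$.

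For the $U(\bdk)$ statement, $T|_{H_{n,\ka}}$ is an operator on the finite-dimensional space $H_{n,\ka}$ that commutes with the restriction of $\pi_{U(\bdk)}$. That restriction leaves $H_{n,\ka}$ invariant, since $U(\bdk)$ preserves block-degrees. Under the identification of $H_{n,\ka}$ with $\bigotimes_j\cP_{\ka_j}(\bC^{k_j})$, it is the tensor product of the natural actions. The one point needing care is that this identification intertwines the actions. On $H_{n,\ka}$, the function $\pi(A)f=f\circ A^{-1}$ is again a polynomial of the same block-degree, and the identification is literally "polynomial as polynomial," so the two actions agree. The inner product is $U(\bdk)$-invariant but differs from any fixed tensor inner product only by the choice of norms, which does not affect irreducibility.

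With the earlier remark that this module is irreducible, Schur's lemma (finite-dimensional, over $\bC$) gives that $T|_{H_{n,\ka}}$ is a scalar multiple of $I_{H_{n,\ka}}$. I expect no real obstacle. The only mildly delicate step is verifying that the invariance and irreducibility statements transfer correctly to $H_{n,\ka}$ inside $\cA^2_\lambda(\bB^n)$, which the explicit monomial computation handles.
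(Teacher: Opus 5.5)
Your proof is correct and follows essentially the route the paper indicates. The paper gives no detailed proof: it only invokes the isotypic decompositions (the $\bT^m$-characters $t\mapsto \overline{t}^{\,\ka}$ with eigenspaces $H_{n,\ka}$, and the irreducible $U(\bdk)$-modules $H_{n,\ka}\cong\bigotimes_j\cP_{\ka_j}(\bC^{k_j})$) together with Schur's lemma, citing \cite{Quiroga2021}, and your argument supplies exactly these details. Deriving the $U(\bdk)$ invariance from the $\bT^m$ case is a tidy touch, since it removes any need to check that the modules $H_{n,\ka}$ are pairwise inequivalent.
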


By the discussion in the preceding section, a Toeplitz operator $T_\varphi$ commutes with $\pi_{U(\bdk)}$ if and only if $\varphi$ is $U(\bdk)$-invariant, and it commutes with $\pi_{\bT^m}$ if and only if $\varphi$ is $\bT^m$-invariant. These classes of symbols play an important role in this work. Following the notation in \cite{Quiroga2021}, given a subgroup $G\subset U(n)$ we let $L^\infty(\bB^n)^G$ denote the set of bounded measurable functions invariant under $G$. That is,
\[
\begin{aligned}
L^\infty(\bB^n)^G
={}&\left\{\varphi\in L^\infty(\bB^n)\colon\right.\\
&\left.\qquad
\varphi(A^{-1}z)=\varphi(z)\text{ for a.e. }z\in\bB^n
\text{ for every }A\in G
\right\}.
\end{aligned}
\]
The functions in $\LinftyUk$ are called \emph{$k$-quasi-radial}, whereas those in $\LinftyTm$ are called \emph{$\bT^m$-invariant}.

We define the analogous spaces on the polyball. In this setting, we restrict attention to subgroups $G\subset U(\bdk)$,
which preserve $\bdB^{\bdk}$. For such $G$, we set
\[
\begin{aligned}
L^\infty(\bdB^{\bdk})^G
={}&\left\{\varphi\in L^\infty(\bdB^{\bdk})\colon\right.\\
&\left.\qquad
\varphi(A^{-1}z)=\varphi(z)\text{ for a.e. }z\in\bdB^{\bdk}
\text{ for every }A\in G
\right\}.
\end{aligned}
\]
All groups considered below are subgroups of $U(\bdk)$.

The space $\LinftyUk$, the Toeplitz operators with symbols in this space, and analogous constructions on other function spaces have been studied extensively (see, for example, \cite{Vasilevski2010,BauerVasilevski2013}).
A direct verification shows that $U(\bdk)$-invariance is equivalent to the requirement that $a$ depend only on the grouped radii $(|z_{(1)}|,\ldots,|z_{(m)}|)$ of $z$. Indeed (see \cite{Quiroga_Sanchez_2021}), a function $a$ is $k$-quasi-radial if there exists a function $h_a$ defined on $\tau(\bB^m)$ such that
\[
a(z)=h_a(|z_{(1)}|,\ldots,|z_{(m)}|),\qquad z\in\mathbb B^n.
\]
For simplicity, we identify the functions $a$ and $h_a$ whenever no confusion arises.

The preceding observations, together with a direct computation, yield the following standard result (see \cite{Vasilevski2010,Quiroga2021}):
\begin{prop}
\label{prop:Ta-quasi-radial}
Let $a\in L^\infty(\mathbb B^n)$. Then $a$ is $k$-quasi-radial if and only if, for every $\ka\in\mathbb N_0^m$, the subspace $H_{n,\ka}$ is invariant under $T_{n,a}$ and the restriction $T_{n,a}|_{H_{n,\ka}}$ is a scalar multiple of the identity:
\[
T_{n,a}|_{H_{n,\ka}}=\gamma_{a,\lambda}(\ka) I_{H_{n,\ka}}.
\]
In this case, the eigenvalue sequence $\gamma_{a,\lambda}$ is given by
\begin{equation}
\label{eq:formula-gamma_a}
\gamma_{a,\lambda}(\kappa)=
\frac{2^m\Gamma(n+\lambda+|\kappa|+1)}{\Gamma(\lambda+1)
(\bdk+\ka-\bdone)!}
\int_{\tau(\mathbb B^m)}
a(\bdr)(1-|\bdr|_2^2)^\lambda
\bdr^{2\bdk+2\ka-\bdone}d\bdr.
\end{equation}
\end{prop}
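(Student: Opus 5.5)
The plan is to prove the two implications separately and then obtain \eqref{eq:formula-gamma_a} by evaluating a single diagonal matrix coefficient.

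\emph{Forward direction.} Suppose $a$ is $k$-quasi-radial, so $a\in\LinftyUk$. By the discussion preceding Proposition~\ref{prop:T-Uk-Tm-as-direct-sum}, $T_{n,a}$ commutes with $\pi_{U(\bdk)}$. Proposition~\ref{prop:T-Uk-Tm-as-direct-sum} then shows that every $H_{n,\ka}$ is invariant under $T_{n,a}$ and that each restriction is a scalar multiple of the identity. (That part of the proposition is Schur's lemma applied to the irreducible $U(\bdk)$-modules $H_{n,\ka}\cong\bigotimes_j\cP_{\ka_j}(\bC^{k_j})$; these are pairwise inequivalent, so the isotypic components are exactly the $H_{n,\ka}$.)

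\emph{Converse.} Suppose every $H_{n,\ka}$ is invariant and $T_{n,a}|_{H_{n,\ka}}=\gamma(\ka)I$. Then $T_{n,a}$ acts as a scalar on each summand of $\cA^2_\lambda(\bB^n)=\bigoplus_\ka H_{n,\ka}$. Each $H_{n,\ka}$ is $\pi_{U(\bdk)}$-invariant, so $T_{n,a}$ commutes with $\pi_{U(\bdk)}$. Fix $A\in U(\bdk)$. The identity $\pi(A)T_a\pi(A)^*=T_{A\cdot a}$ gives $T_{A\cdot a-a}=0$.

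The main obstacle is the injectivity of the symbol map $\varphi\mapsto T_\varphi$ on $L^\infty(\bB^n)$, which the paper has not stated. I would use the standard argument. If $T_\varphi=0$, then $\langle T_\varphi K_w,K_z\rangle=0$ for all reproducing kernels $K_z,K_w$. Hence the function $(z,w)\mapsto\int\varphi\,K_w\overline{K_z}\,dv_{n,\lambda}$ vanishes identically. This function is holomorphic in $z$ and antiholomorphic in $w$, so it vanishes identically in $(z,\bar w)$, and in particular on the diagonal $w=z$. Dividing by $K(z,z)$, the Berezin transform of $\varphi$ is zero. The Berezin transform is injective on $L^\infty$, which can be seen via the $\cM$-harmonic / Fourier argument in \cite{Zhu2005SpacesHolomorphic}. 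Therefore $\varphi=0$ a.e., so $A\cdot a=a$ a.e. for every $A\in U(\bdk)$, i.e.\ $a$ is $k$-quasi-radial.

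\emph{Formula.} For $a$ quasi-radial, choose $\alpha$ with $|\alpha_{(j)}|=\ka_j$, for instance $\alpha_{(j)}=(\ka_j,0,\dots,0)$. Then
\[
\gamma_{a,\lambda}(\ka)=\langle T_a e^{(\lambda)}_{n,\alpha},e^{(\lambda)}_{n,\alpha}\rangle=\frac{\Gamma(n+|\ka|+\lambda+1)}{\alpha!\,\Gamma(n+\lambda+1)}\int_{\bB^n}a(z)|z^\alpha|^2\,dv_{n,\lambda}(z).
\]
Apply \eqref{eq:int-formula-m-polar-coordinates}. Since $|z^\alpha|^2=\bdr^{2\ka}\prod_j|\xi_j^{\alpha_{(j)}}|^2$, the integral factors into an integral over $\tau(\bB^m)$ of $a(\bdr)(1-|\bdr|_2^2)^\lambda\bdr^{2\bdk+2\ka-\bdone}$ times the product over $j$ of $\int_{S^{2k_j-1}}|\xi^{\alpha_{(j)}}|^2d\sigma_{k_j}=2\pi^{k_j}\alpha_{(j)}!/(k_j-1+\ka_j)!$, using \eqref{eq:int_S-zalpha-zbeta}.

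For the constants: the sphere factors contribute $2^m\pi^n\alpha!/(\bdk+\ka-\bdone)!$. The normalization $c_{n,\lambda}=\Gamma(n+\lambda+1)/(\pi^n\Gamma(\lambda+1))$ cancels $\pi^n$ and $\Gamma(n+\lambda+1)$, and the $\alpha!$ cancels. This leaves exactly
\[
\frac{2^m\Gamma(n+\lambda+|\ka|+1)}{\Gamma(\lambda+1)(\bdk+\ka-\bdone)!},
\]
as claimed in \eqref{eq:formula-gamma_a}.
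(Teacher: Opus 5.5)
Your proposal is correct and follows the same route the paper indicates. The forward direction uses Proposition~\ref{prop:T-Uk-Tm-as-direct-sum}, the converse uses the equivalence between $G$-invariance of the symbol and commutation of $T_\varphi$ with $\pi_G$, and \eqref{eq:formula-gamma_a} comes from a direct diagonal-matrix-coefficient computation. The nontrivial half of that equivalence is injectivity of $\varphi\mapsto T_\varphi$, which you supply, and you track the constants correctly.

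One simplification: you can get that injectivity without the Berezin transform. If $\langle T_\varphi e^{(\lambda)}_{n,\alpha},e^{(\lambda)}_{n,\beta}\rangle=0$ for all $\alpha,\beta$, then $\varphi\,dv_{n,\lambda}$ annihilates every polynomial in $z,\overline{z}$, and Stone--Weierstrass gives $\varphi=0$ a.e.
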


\subsection{\texorpdfstring{The commutative $C^*$-algebra $\Tkqr$}{The commutative C-star algebra Tk-qr}}

As a consequence of Proposition~\ref{prop:Ta-quasi-radial}, the $C^*$-algebra
\[
\Tkqr=
C^*(T_a\colon a\text{ is }k\text{-quasi-radial}),
\]
generated by all Toeplitz operators with $k$-quasi-radial symbols, is a unital commutative $C^*$-algebra. Let $M(\Tkqr)$ denote its maximal ideal space.
The elements of $\Tkqr$ are diagonal operators $D_\gamma$ of the form
\[
D_\gamma=\bigoplus_{\ka\in\bN_0^m}\gamma(\ka)I_{H_{n,\ka}},
\]
where $\gamma$ is a bounded sequence defined on $\bN_0^m$.

We will need a fairly detailed description of $M(\Tkqr)$ in the following sections. For a complete account, see \cite{BauerVasilevski2013}. To streamline the notation, we use an approach that is slightly different from, but equivalent to, the one adopted there.

Under the identification $D_\gamma\cong \gamma$, we regard the algebra $\mathcal T_{k\text{-qr}}$ as a $C^*$-algebra of bounded functions on $\mathbb N_0^m$.
It was shown in \cite{BauerVasilevski2013} that the algebra $\mathcal T_{k\text{-qr}}$ contains all functions on $\mathbb N_0^m$ with limits at infinity. In particular, it contains all orthogonal projections
\[
P_\kappa \colon \mathcal A^2_\lambda(\mathbb B^n)\longrightarrow H_{n,\kappa},
\]
for $\kappa\in \mathbb N_0^m$.
Since these projections separate the points of $\mathbb N_0^m$, we conclude that $M(\mathcal T_{k\text{-qr}})$ is homeomorphic to a compactification of $\mathbb N_0^m$.
We henceforth regard $M(\Tkqr)$ as this compactification and denote by $\psi_\mu$ the character corresponding to $\mu\in M(\Tkqr)$. Thus,
\[
\psi_\mu(D_\gamma)=\gamma(\mu),\qquad D_\gamma\in \Tkqr.
\]

It was also proved in \cite{BauerVasilevski2013} that, for every $j\in\{1,\ldots,m\}$ and $d\in\bN_0$, the projections
\begin{equation}
\label{eq:def-Qjd}
Q_{j,d}=\bigoplus_{\ka\in\bN_0^m,\ \ka_j=d}P_{\ka}
\end{equation}
belong to $\Tkqr$ as well.
Under the unitary isomorphism \eqref{eq:def-op-U}, we have
\[
UQ_{j,d}U^*=
I\otimes\cdots\otimes P_{j,d}\otimes\cdots\otimes I,
\]
and
\[
UP_\ka U^*=P_{1,\ka_1}\otimes\cdots\otimes
P_{m,\ka_m},
\]
where $P_{j,d}$ is the orthogonal projection from $\cA^2(\bB^{k_j})$ onto $H_{k_j,d}$.

Now let $\Omega=(\mathbb N_0\cup\{\infty\})^m$ be the compactification of $\mathbb N_0^m$ by adding a ``point at infinity'' for each coordinate.
The projections $Q_{j,d}$ defined above show that the space $C(\Omega)$ is continuously embedded into $\mathcal T_{k\text{-qr}}$. In particular, restriction induces the continuous surjection
\[
M(\mathcal T_{k\text{-qr}})
\longrightarrow \Omega,
\]
given by the restriction $\psi\mapsto \psi|_{C(\Omega)}$.
This yields the stratification
\begin{equation}
\label{eq:M=bicup-Momega}
M(\mathcal T_{k\text{-qr}})=
\bigcup_{\omega\in \Omega}M(\omega),
\end{equation}
where $\mu\in M(\omega)$ if and only if, for every $j\in\{1,\ldots,m\}$ and $d\in\bN_0$,
\[
\begin{cases}
\psi_\mu(Q_{j,d})=0,&\quad \text{if }\omega_j=\infty\\ \psi_\mu(Q_{j,d})=\delta_{d,\omega_j},&\quad \text{if }\omega_j\in\bN_0.
\end{cases}
\]
The points of $\Omega$ encode the coordinatewise directions in which nets in $\mathbb N_0^m$ approach points of $M(\mathcal T_{k\text{-qr}})$. For example, for every $\kappa\in \mathbb N_0^m\subset \Omega$ we have
\[
M(\kappa)=\{\kappa\},
\]
while the set $M((\infty,\infty,\ldots,\infty))$ is the collection of points in $M(\mathcal T_{k\text{-qr}})$ that are limits of nets whose coordinates all tend to $\infty$.
Accordingly, we refer to the $j$th component of $\mu\in M(\Tkqr)$ as the $j$th component of $\omega$ such that $\mu\in M(\omega)$.

The algebra $\Tkqr$ is much larger than $C(\Omega)$ and contains functions that oscillate slowly in a specific sense described in \cite{BauerVasilevski2013}. In fact, the topology of $M(\Tkqr)$ is most naturally described in terms of nets.

\subsection{\texorpdfstring{$\bT^m$-invariant Toeplitz operators}{Tm-invariant Toeplitz operators}}
\label{sec:Tm-invariant-Toep-op}

In this section, we show that Toeplitz operators can naturally be viewed as families of Toeplitz operators on the Bergman space over the polyball indexed by the space $M(\Tkqr)$.
Under suitable conditions on the symbols, these families are continuous.

Let $a\in \LinftyTm$. As in the quasi-radial case, this implies the existence of a function $f_a\in L^\infty(\tau(\bB^m)\times \bdS^{\bdk})$ such that
\begin{equation}
    \label{eq:def-associated-fa-Tm-invariant}
    a(z)=f_a\big(\bdr(z),\bdxi(z)\big)
\end{equation}
and
\[
f_a(\bdr,t_1\xi_1,\ldots,t_m\xi_m)=
f_a(\bdr,\xi_1,\ldots,\xi_m),
\]
for almost every $\bdr\in\tau(\bB^m)$, every $t=(t_1,\ldots,t_m)\in\bT^m$ and almost every $(\xi_1,\ldots,\xi_m)\in \bdS^{\bdk}$.
Note that, by \eqref{eq:explicit_action_Tm}, we mean
\[
t_j\xi_j=(t_j\xi_{j,1},t_j\xi_{j,2},\ldots,t_j\xi_{j,k_j}),
\]
where $\xi_j=(\xi_{j,1},\ldots,\xi_{j,k_j})\in S^{2k_j-1}$, for $j=1,\ldots,m$.
In contrast to the quasi-radial symbols, in this case, to avoid ambiguity, we will distinguish between $a$ and $f_a$.

\begin{thm}
\label{thm:Tna=oplus-T-polyball}
Let $a\in L^\infty(\bB^n)^{\bT^m}$. Then, under the unitary operator $U$,
\[
UT_{n,a}U^*=\bigoplus_{\ka\in \bN_0^m} \bdT_{\widetilde{a}_\ka}|_{\bdH_{\bdk,\ka}},
\]
where $\widetilde{a}_\ka\in \LinftyTmpol$ is defined by
\[
\widetilde{a}_\ka(z)=
\gamma_{f_a(\cdot,\bdxi(z)),\lambda}(\ka),
\]
for $z\in\bdB^{\bdk}$. Here, $f_a$ is the representative associated with $a$ in \eqref{eq:def-associated-fa-Tm-invariant}; explicitly,
\begin{equation}
\label{eq:def-a-tilde-kappa}
\widetilde{a}_\ka(z)=
\frac{2^m\Gamma(n+|\kappa|+\lambda+1)}{\Gamma(\lambda+1)(\bdk+\ka-\bdone)!}
\int_{\tau({\mathbb B}^m)}
f_a\big(\bdr,\bdxi(z)\big)
\bdr^{2\bdk+2\ka-\bdone}(1-|\bdr|_2^2)^\lambda d\bdr.
\end{equation}
Moreover, for every $\ka\in\bN_0^m$, the following properties hold:
\begin{enumerate}
    \item $\|\widetilde{a}_\ka\|_\infty\leq \|a\|_{\infty}$.
    \item The symbol $\widetilde{a}_\ka$ depends only on its $\bdS^{\bdk}$-component: there exists a function $g_{a,\ka}\in L^\infty(\bdS^{\bdk})$, unique up to equality almost everywhere, such that
    \[
    \widetilde{a}_\ka(\bdr\bdxi)=g_{a,\ka}(\bdxi),
    \]
    for almost every $(\bdr,\bdxi)\in[0,1)^m\times\bdS^{\bdk}$.
\end{enumerate}
\end{thm}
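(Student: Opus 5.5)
The plan is to compare matrix entries of both sides in the monomial bases. By Proposition~\ref{prop:T-Uk-Tm-as-direct-sum}, $T_{n,a}$ leaves every $H_{n,\ka}$ invariant. The same argument applies on the polyball, because $\widetilde a_\ka$ is $\bT^m$-invariant: it depends only on $\bdxi(z)$, and $f_a$ is $\bT^m$-invariant in the angular variable. Hence $\bdT_{\widetilde a_\ka}$ leaves $\bdH_{\bdk,\ka}$ invariant. Since $U$ maps $H_{n,\ka}$ onto $\bdH_{\bdk,\ka}$, it suffices to show that, for all $\alpha,\beta\in\bN_0^n$ with $|\alpha_{(j)}|=|\beta_{(j)}|=\ka_j$,
\[
\langle T_{n,a}e^{(\lambda)}_{n,\alpha},e^{(\lambda)}_{n,\beta}\rangle
=\langle \bdT_{\widetilde a_\ka}\bde^{(0)}_{\bdk,\alpha},\bde^{(0)}_{\bdk,\beta}\rangle .
\]
Before that, we check that $\widetilde a_\ka$ is well defined and measurable. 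This follows from Fubini applied to $f_a\in L^\infty(\tau(\bB^m)\times\bdS^{\bdk})$, since the integral \eqref{eq:def-a-tilde-kappa} is then a measurable function of $\bdxi$ defined a.e.

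\textbf{Left-hand side.} We write $\langle a\,e_\alpha,e_\beta\rangle$ using \eqref{eq:int-formula-m-polar-coord}. With $z=\bdr\bdxi$ we have $z^\alpha\overline{z^\beta}=\bdr^{2\ka}\,\bdxi^\alpha\overline{\bdxi^\beta}$, where $\bdr^{2\ka}$ is understood blockwise. This gives
\[
c_{n,\lambda}C_{n,\alpha}C_{n,\beta}\int_{\bdS^{\bdk}}\bdxi^\alpha\overline{\bdxi^\beta}\Big(\int_{\tau(\bB^m)}f_a(\bdr,\bdxi)\bdr^{2\bdk+2\ka-\bdone}(1-|\bdr|_2^2)^\lambda d\bdr\Big)d\bdsig_{\bdk}(\bdxi),
\]
where $C_{n,\alpha}$ denotes the normalizing constant of $e^{(\lambda)}_{n,\alpha}$. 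The inner integral equals $\widetilde a_\ka$ divided by its prefactor. The result is therefore a constant $A_{\alpha,\beta}$ times $\int_{\bdS^{\bdk}}\bdxi^\alpha\overline{\bdxi^\beta}\,g_{a,\ka}\,d\bdsig_{\bdk}$.

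\textbf{Right-hand side.} We integrate over $[0,1)^m\times\bdS^{\bdk}$ with $\widetilde a_\ka(\bdr\bdxi)=g_{a,\ka}(\bdxi)$. The radial integral factorizes as $\prod_j\int_0^1 r_j^{2k_j+2\ka_j-1}dr_j=\prod_j \frac{1}{2(k_j+\ka_j)}$, leaving a constant $B_{\alpha,\beta}$ times the same spherical integral.

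The main obstacle, which is pure bookkeeping, is verifying $A_{\alpha,\beta}=B_{\alpha,\beta}$. The constants depend on $\alpha,\beta$ only through factors $\sqrt{\alpha!\beta!}$ and through $\ka$. I would avoid computing the Gamma constants directly and instead use a test case: take $a\equiv1$. Then $T_{n,a}=I$, $f_a\equiv1$, and $\gamma_{1,\lambda}(\ka)=1$ (the normalization built into \eqref{eq:formula-gamma_a}), so $\widetilde a_\ka\equiv1$ and both sides equal $\delta_{\alpha,\beta}$. Together with \eqref{eq:int_S-zalpha-zbeta} this forces $A_{\alpha,\alpha}=B_{\alpha,\alpha}$. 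Since the $\alpha,\beta$-dependence of both constants has the same form $\sqrt{\alpha!\beta!}$ times a function of $\ka$, we get $A_{\alpha,\beta}=B_{\alpha,\beta}$ for all admissible $\alpha,\beta$.

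\textbf{Items 1 and 2.} For item 1, the formula defines $\widetilde a_\ka(z)$ as an average of $f_a(\cdot,\bdxi(z))$ against a probability density in $\bdr$; again $\gamma_{1,\lambda}=1$ gives total mass one. Hence $|\widetilde a_\ka|\le\|a\|_\infty$ a.e. For item 2, existence of $g_{a,\ka}$ is immediate from the definition: set $g_{a,\ka}(\bdxi)=\gamma_{f_a(\cdot,\bdxi),\lambda}(\ka)$. Uniqueness a.e. follows because the product measure on $[0,1)^m\times\bdS^{\bdk}$ has radial marginal of positive mass, so two such $g$'s agreeing a.e. on the product must agree $\bdsig_{\bdk}$-a.e.
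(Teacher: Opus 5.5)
Your proposal is correct and follows essentially the same route as the paper: both compute the matrix entries in the monomial bases via grouped polar coordinates and reduce them to the common spherical integral $\int_{\bdS^{\bdk}} g_{a,\ka}(\bdxi)\,\bdxi^\alpha\overline{\bdxi^\beta}\,d\bdsig_{\bdk}(\bdxi)$. The only difference is that you match the prefactors with the normalization test $a\equiv 1$ (using $\gamma_{1,\lambda}(\ka)=1$ and the shared $1/\sqrt{\alpha!\beta!}$ dependence on $\alpha,\beta$), whereas the paper tracks the Gamma factors explicitly; your shortcut is valid.
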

\begin{proof}
    Since $a\in \LinftyTm$, the Toeplitz operator $T_{n,a}$ commutes with the representation $\pi_{\bT^m}$. Therefore, Proposition~\ref{prop:T-Uk-Tm-as-direct-sum} gives
    \[
    T_{n,a}=\bigoplus_{\ka\in\bN_0^m}T_{n,a}|_{H_{n,\ka}}.
    \]

    Fix $\ka\in\bN_0^m$ and $\alpha=(\alpha_{(1)},\ldots,\alpha_{(m)}),\beta=(\beta_{(1)},\ldots,\beta_{(m)})\in \bN_0^n$ with $|\alpha_{(j)}|=|\beta_{(j)}|=\ka_j$, $j=1,\ldots,m$.
    Applying \eqref{eq:int-formula-m-polar-coord} and Fubini's theorem gives
    \begin{align}
    \label{eq:Ta-tm-inv-prod-1-int}
        \langle
        T_{n,a}e_{n,\alpha}^{(\lambda)},e_{n,\beta}^{(\lambda)}
        \rangle&=
        \int_{\bB^n}a(z)e_{n,\alpha}^{(\lambda)}(z)\overline{e_{n,\beta}^{(\lambda)}(z)}dv_{n,\lambda}(z)\nonumber\\
        &=
        \frac{\Gamma(n+|\kappa|+\lambda+1)}
        {\pi^n\Gamma(\lambda+1)\sqrt{\alpha!\beta!}}
        \int_{{\mathbb B}^n}
        a(z)
        z^\alpha\overline{z^\beta}
        (1-|z|^2)^\lambda dV_n(z)\nonumber\\
        &=
        \frac{\Gamma(n+|\kappa|+\lambda+1)}
        {\pi^n\Gamma(\lambda+1)\sqrt{\alpha!\beta!}}
        \int_{\tau(\bB^m)}
        (1-|\bdr|_2^2)^\lambda \bdr^{2\bdk+2\ka-\bdone}d\bdr\nonumber\\
        &\times
        \int_{\bdS^{\bdk}}f_a(\bdr,\bdxi)\bdxi^\alpha\overline{\bdxi^\beta}d\bdsig_{\bdk}(\bdxi)\nonumber\\
        &=
        \frac{\Gamma(n+|\kappa|+\lambda+1)}
        {\pi^n\Gamma(\lambda+1)\sqrt{\alpha!\beta!}}
        \int_{\bdS^{\bdk}}F_a(\bdxi)\bdxi^\alpha\overline{\bdxi^\beta}d\bdsig_{\bdk}(\bdxi),
    \end{align}
where
\[
    F_a(\bdxi)=\int_{\tau(\bB^m)}
        f_a(\bdr,\bdxi)
        (1-|\bdr|_2^2)^\lambda \bdr^{2\bdk+2\ka-\bdone}d\bdr.
\]
The identity $\prod_{j=1}^m\left(
2(k_j+\ka_j)
\int_0^1t_j^{2k_j+2\ka_j-1}dt_j\right)=1$ yields
\begin{align*}
\int_{\bdS^{\bdk}}F_a(\bdxi)\bdxi^\alpha\overline{\bdxi^\beta}d\bdsig_{\bdk}(\bdxi)
&=
2^m
(\bdk+\ka)^{\bdone}
\int_{[0,1]^m}\bdt^{2\bdk-\bdone}d\bdt\notag\\
&\quad\times
\int_{\bdS^{\bdk}}
F_a(\bdxi)(\bdt\bdxi)^\alpha\overline{(\bdt\bdxi)^\beta}
d\bdsig_{\bdk}(\bdxi),
\end{align*}
where $\bdt=(t_1,\dots,t_m)$ and $\bdt\bdxi=(t_1\xi_1,\ldots,t_m\xi_m)\in\bdB^{\bdk}$. Therefore, applying the spherical-coordinate formula \eqref{eq:measures-polar-coordinates} to each factor gives
\[
\int_{\bdS^{\bdk}}
F_a(\bdxi)\bdxi^\alpha\overline{\bdxi^\beta}
d\bdsig_{\bdk}(\bdxi)
=
2^m(\bdk+\ka)^{\bdone}
\int_{\bdB^{\bdk}}F_a\left(\frac{z_{(1)}}{|z_{(1)}|},\ldots,\frac{z_{(m)}}{|z_{(m)}|}\right)z^\alpha\overline{z^\beta}d\bdV_{\bdk}(z).
\]
Substituting this identity into \eqref{eq:Ta-tm-inv-prod-1-int} gives
\begin{align*}
    \langle
    T_{n,a}e_{n,\alpha}^{(\lambda)},e_{n,\beta}^{(\lambda)}
    \rangle&=
    \frac{2^m\Gamma(n+|\kappa|+\lambda+1)}
        {\pi^n\Gamma(\lambda+1)\sqrt{\alpha!\beta!}}
        (\bdk+\ka)^{\bdone}\\
&
\times\int_{\bdB^{\bdk}}F_a\big(\bdxi(z)\big)z^\alpha\overline{z^\beta}d\bdV_{\bdk}(z)
\\
&=
\frac{2^m\Gamma(n+|\kappa|+\lambda+1)}
        {\pi^n\Gamma(\lambda+1)\sqrt{\alpha!\beta!}}
        (\bdk+\ka)^{\bdone}
        \prod_{j=1}^m\frac{\sqrt{\alpha_{(j)}!\beta_{(j)}!}k_j!}{(k_j+\ka_j)!}\\
&\times\int_{\bdB^{\bdk}}F_a\big(\bdxi(z)\big)\bde_{\bdk,\alpha}^{(0)}(z)\overline{\bde_{\bdk,\beta}^{(0)}(z)}d\bdV_{\bdk}(z)\\
&=
\frac{2^m\Gamma(n+|\kappa|+\lambda+1)}
        {\pi^n\Gamma(\lambda+1)}
        \prod_{j=1}^m\frac{k_j!}{(k_j+\ka_j-1)!}\\
&\times\int_{\bdB^{\bdk}}F_a\big(\bdxi(z)\big)\bde_{\bdk,\alpha}^{(0)}(z)\overline{\bde_{\bdk,\beta}^{(0)}(z)}d\bdV_{\bdk}(z)\\
&=
\frac{2^m\Gamma(n+|\kappa|+\lambda+1)}{\Gamma(\lambda+1)(\bdk+\ka-\bdone)!}
\int_{\bdB^{\bdk}}F_a\big(\bdxi(z)\big)\bde_{\bdk,\alpha}^{(0)}(z)\overline{\bde_{\bdk,\beta}^{(0)}(z)}d\bdv_{\bdk}(z)\\
&=
\int_{\bdB^{\bdk}}\gamma_{f_a(\cdot,\bdxi(z)),\lambda}(\ka)\bde_{\bdk,\alpha}^{(0)}(z)\overline{\bde_{\bdk,\beta}^{(0)}(z)}d\bdv_{\bdk}(z).
\end{align*}
The $\bT^m$-invariance of $\widetilde{a}_\ka$ follows from the invariance of $a$; the remaining assertions follow from \eqref{eq:def-a-tilde-kappa}.
\end{proof}

We finish this section by providing some examples. Different versions and extensions of these are easily constructed.

\begin{ex}
\label{ex:example1-symbol}
Consider the case $n=4$, $\lambda=0$, $m=2$ and $\bdk=(2,2)$. Hence, for any $z\in\bB^4$, we have the tuples
\[
z_{(1)}=(z_1,z_2),\ z_{(2)}=(z_3,z_4)\in \bC^2,
\]
\[
\bdr(z)=(|z_{(1)}|,|z_{(2)}|)\in \tau(\bB^2),
\]
\[
\bdxi(z)=
\left(
\frac{z_{(1)}}{|z_{(1)}|},
\frac{z_{(2)}}{|z_{(2)}|}
\right)
\in S^{3}\times S^{3}.
\]
For $j=1,2$, let $H_j\in L^\infty(S^3\times S^3)$ be any real-valued function invariant under the action of $\bT^2$.
For example, we may take a function of the form 
\[
H_j(\xi_1,\xi_2)=h_j\big(\xi_{1,1}\overline{\xi_{1,2}},\xi_{2,1}\overline{\xi_{2,2}}\big),\qquad \xi_1,\xi_2\in S^3,
\]
for some real-valued bounded Borel function $h_j$.

We define the symbol $\phi\in L^\infty(\bB^4)$ by
$\phi(z)=f_\phi(\bdr(z),\bdxi(z))$, $z\in \bB^4$,
where
$f_\phi\in L^\infty(\tau(\bB^2)\times\bdS^{\bdk})$ is the function
\[
f_\phi(\bdr,\bdxi)=r_1^{iH_1(\bdxi)}
r_2^{i H_2(\bdxi)}.
\]
Then, for $\ka\in\bN_0^m$, one has from \eqref{eq:def-a-tilde-kappa} and Fubini's theorem
\begin{align}
\label{eq:example1-formula}
&\widetilde{\phi}_\ka(z)=
\frac{2^m(|\ka|+4)!}{(\ka_1+1)!(\ka_2+1)!}
\int_{\tau(\bB^2)}
r_1^{iH_1(\bdxi)}
r_2^{iH_2(\bdxi)}
r_1^{2\ka_1+3}r_2^{2\ka_2+3}dr_1dr_2\nonumber
\\
&=
\frac{|\ka|+4}
{
\frac{i}{2}H_1(\bdxi)+
\frac{i}{2}H_2(\bdxi)+|\ka|+4
}
\cdot
\frac{B
\bigg(
\frac{i}{2}H_1(\bdxi)+\ka_1+2,
\frac{i}{2}H_2(\bdxi)+\ka_2+2
\bigg)}{B(\ka_1+2,\ka_2+2)},
\end{align}
where $B(z,w)$ denotes the Beta function.
\end{ex}

\begin{ex}
\label{ex:example2-symbol}
    In the same setting as above, let now $G_1,G_2\in L^\infty(S^3\times S^3)$ be any (complex-valued) functions invariant under $\bT^2$, and let $\alpha,\beta\geq 0$.
    
    Consider the symbol $\psi\in L^\infty(\bB^4)$ such that
    \[
    f_\psi(\bdr,\bdxi)=
    r_1^{2\alpha} G_1(\bdxi)
    +r_2^{2\beta} G_2(\bdxi).
    \]
    Then
    \begin{align}
    \label{eq:example2-formula}
    \widetilde{\psi}_\ka(z)&=
    \frac{2^m(|\ka|+4)!}{(\ka_1+1)!(\ka_2+1)!}
    \int_{\tau(\bB^2)}
    \bigg(
    r_1^{2\alpha} G_1(\bdxi)
    +r_2^{2\beta} G_2(\bdxi)
    \bigg)
    r_1^{2\ka_1+3}r_2^{2\ka_2+3}dr_1dr_2 \nonumber\\
    &=
    \frac{B(|\ka|+5,\alpha)}{B(\ka_1+2,\alpha)}
    G_1(\bdxi)+
    \frac{B(|\ka|+5,\beta)}{B(\ka_2+2,\beta)}
    G_2(\bdxi),
    \end{align}
    where the cases $\alpha=0$ and $\beta=0$ are understood in the limiting sense:
    \[
    \lim_{\alpha\to 0}\frac{B(|\ka|+5,\alpha)}{B(\ka_1+2,\alpha)}=\lim_{\beta\to0}\frac{B(|\ka|+5,\beta)}{B(\ka_2+2,\beta)}=1.
    \]
    \end{ex}

\subsection{Continuous families of operators}
\label{sec:cont-families}

Theorem~\ref{thm:Tna=oplus-T-polyball} shows that, given $a\in \LinftyTm$, one gets a family of operators through the map
\[
\mathbb N_0^m\ni \kappa \longmapsto
\bdT_{\widetilde{a}_\ka}|_{\bdH_{\bdk,\ka}},
\]
where the operators $\bdT_{\widetilde{a}_\ka}$ act on the Bergman space on the polyball $\cA^2(\bdB^{\bdk})$.

Since $\widetilde{a}_\ka(z)$ depends only on the $\bdS^{\bdk}$-component of $z$, we may regard it as a function on $\bdS^{\bdk}$ by setting
\[
\widetilde{a}_\ka|_{\bdS^{\bdk}}(\bdxi)\coloneqq \widetilde{a}_\ka(\bdr\bdxi),
\]
for any $\bdr\in(0,1)^m$.

If the original symbol $a\in L^\infty(\bB^n)$ also depends only on its $\bdS^{\bdk}$-component, we may define $a|_{\bdS^{\bdk}}$ on $\bdS^{\bdk}$ in a similar way and then \eqref{eq:def-a-tilde-kappa} implies
\[
a|_{\bdS^{\bdk}}(\bdxi)=\widetilde{a}_\ka|_{\bdS^{\bdk}}(\bdxi),
\]
for every $\ka\in \bN_0^m$ and almost every $\bdxi\in\bdS^{\bdk}$.

Equation \eqref{eq:formula-gamma_a} shows that, for almost every fixed $z\in\bdB^{\bdk}$,
the function
\[
\bN_0^m\ni\kappa\longmapsto\widetilde{a}_{\ka}(z)
\]
is the eigenvalue sequence associated with a $k$-quasi-radial symbol. We denote this function by $\widetilde{a}_\bullet(z)$.

If $\mu\in M(\Tkqr)$ and $\psi_\mu$ is its associated multiplicative functional, then we define
\begin{equation}
\label{eq:def-tilde-amu}
\widetilde{a}_\mu(z)\coloneqq
\psi_\mu\big(
\widetilde{a}_{\bullet}(z)
\big).
\end{equation}
If $(\kappa^\alpha)_\alpha$ is any net in $\mathbb N_0^m$ such that $\lim_{\alpha}\kappa^\alpha=\mu$, then
\begin{equation}
\label{eq:tilde-amu=limaka}
\widetilde{a}_\mu(z)=\lim_{\alpha}\widetilde{a}_{\ka^\alpha}(z).
\end{equation}

\begin{rem}
\label{rem:measurability-tildeamu}
    By construction, for every $\mu\in M(\Tkqr)$, the function $\widetilde{a}_\mu(z)$ is well defined for almost every $z\in\bdB^{\bdk}$. It therefore determines a bounded function outside a null set. This construction alone, however, does not guarantee measurability with respect to $z$.

    Indeed, the topology of $M(\Tkqr)$ requires the use of nets; consequently, $\widetilde{a}_\mu$ cannot in general be treated as an almost-everywhere limit of a sequence of measurable functions.

    For this reason, for a general $a\in L^\infty(\bB^n)$, we explicitly assume measurability of the resulting functions $\widetilde{a}_\mu$ when needed. Under the continuity assumption introduced below, measurability will follow from continuity on the angular variable.
\end{rem}

Suppose that $\widetilde{a}_\mu$ is measurable for every $\mu\in M(\Tkqr)$. Since $\widetilde{a}_{\ka}$ is $\bT^m$-invariant for every fixed $\ka$, the function $\widetilde{a}_\mu$ is $\bT^m$-invariant as well. Moreover, $\widetilde{a}_\mu(z)$ depends only on the $\bdS^{\bdk}$-component of $z$ as explained above. As before, we denote by $\widetilde{a}_\mu|_{\bdS^{\bdk}}$ the function on $\bdS^{\bdk}$, unique up to equality almost everywhere, such that
$\widetilde{a}_\mu|_{\bdS^{\bdk}}(\bdxi)=\widetilde{a}_\mu(\bdr\bdxi)$ for almost every $\bdr\in[0,1)^m$.

A natural question concerns the behavior of these operators as $\ka$ converges to a point in $M(\Tkqr)$.
One expects that, under suitable conditions, the operator $\bdT_{\widetilde{a}_\ka}$ approaches the operator $\bdT_{\widetilde{a}_\mu}$.
This question has a satisfactory answer under the following continuity assumption.

Let $\contTm$ denote the space of all $a\in \LinftyTm$ for which a representative of the associated function $f_a\in L^\infty(\tau(\bB^m)\times
\bdS^{\bdk})$ satisfies
\begin{equation}
       \label{eq:limit-condition-symbol}
            \|f_a(\cdot,\bdxi')-f_a(\cdot,\bdxi)\|_{\infty,\tau(\mathbb B^m)}
            \longrightarrow0,
\end{equation}
as $\bdxi'\to \bdxi$ in $\bdS^{\bdk}$.
Equivalently, $\contTm$ consists of those functions $a$ for which $\bdS^{\bdk}\ni\bdxi\mapsto f_a(\cdot,\bdxi)$ is a continuous family of functions in $L^\infty(\tau(\bB^m))$.

In what follows, for $z\in\bdB^{\bdk}$, we denote by $\widetilde{a}_\bullet(z)$ the function $M(\Tkqr)\ni\mu\mapsto \widetilde{a}_\mu(z)$.

\begin{prop}
\label{prop:continuity-wildeamu}
Let $a\in\contTm$. Then
\[
\big\|
\widetilde{a}_\bullet|_{\bdS^{\bdk}}(\bdxi')
-
\widetilde{a}_\bullet|_{\bdS^{\bdk}}(\bdxi)
\big\|_{\infty,M(\Tkqr)}
\to 0,
\]
as $\bdxi'\to\bdxi$ in $\bdS^{\bdk}$.
In particular, for every $\mu\in M(\Tkqr)$ the function
$\widetilde{a}_\mu|_{\bdS^{\bdk}}$ has a continuous representative and
$\widetilde{a}_\mu$ is measurable.
\end{prop}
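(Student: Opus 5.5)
The plan is to reduce everything to the elementary estimate $|\gamma_{b,\lambda}(\ka)|\le\|b\|_\infty$ for $k$-quasi-radial $b$. This estimate holds because $\gamma_{b,\lambda}(\ka)$ is an eigenvalue of $T_{n,b}$, so $\|D_{\gamma_b}\|=\|T_{n,b}\|\le\|b\|_\infty$. We then use the fact that characters of the commutative $C^*$-algebra $\Tkqr$ have norm one.

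Fix a representative $f_a$ satisfying \eqref{eq:limit-condition-symbol}. For each $\bdxi\in\bdS^{\bdk}$, the function $f_a(\cdot,\bdxi)\in L^\infty(\tau(\bB^m))$ determines a $k$-quasi-radial symbol. By Proposition~\ref{prop:Ta-quasi-radial} and \eqref{eq:def-a-tilde-kappa}, its eigenvalue sequence is $\ka\mapsto\widetilde a_\ka|_{\bdS^{\bdk}}(\bdxi)$. We therefore \emph{define} $\widetilde a_\bullet|_{\bdS^{\bdk}}(\bdxi)$ for every $\bdxi$ (not just almost every) as the sequence $\gamma_{f_a(\cdot,\bdxi),\lambda}$. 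This sequence lies in $\Tkqr$, being the diagonal of $T_{n,f_a(\cdot,\bdxi)}$, so its Gelfand transform on $M(\Tkqr)$ is $\mu\mapsto\psi_\mu(\gamma_{f_a(\cdot,\bdxi),\lambda})$. This agrees with \eqref{eq:def-tilde-amu} for almost every $\bdxi$ (hence for a.e.\ $z$), since the two sequences coincide there.

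Next, linearity of $b\mapsto\gamma_{b,\lambda}$ gives, for $\bdxi,\bdxi'$,
\[
\widetilde a_\bullet|_{\bdS^{\bdk}}(\bdxi')-\widetilde a_\bullet|_{\bdS^{\bdk}}(\bdxi)
=\psi_\bullet\big(D_{\gamma_{f_a(\cdot,\bdxi')-f_a(\cdot,\bdxi),\lambda}}\big).
\]
Each $\psi_\mu$ is a character of a $C^*$-algebra, so $|\psi_\mu(D)|\le\|D\|$. The operator here is $T_{n,b}$ with $b=f_a(\cdot,\bdxi')-f_a(\cdot,\bdxi)$, so its norm is at most $\|b\|_{\infty,\tau(\bB^m)}$. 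Taking the supremum over $\mu$ bounds the left side in $\|\cdot\|_{\infty,M(\Tkqr)}$ by $\|f_a(\cdot,\bdxi')-f_a(\cdot,\bdxi)\|_{\infty,\tau(\bB^m)}$, which tends to $0$ by \eqref{eq:limit-condition-symbol}. Alternatively, one can verify $|\gamma_{b,\lambda}(\ka)|\le\|b\|_\infty$ directly from \eqref{eq:formula-gamma_a}: the kernel is a probability density, by the same normalization used in the proof of Theorem~\ref{thm:Tna=oplus-T-polyball}.

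For the final assertion, fix $\mu$. The map $\bdxi\mapsto\psi_\mu(\gamma_{f_a(\cdot,\bdxi),\lambda})$ is continuous on $\bdS^{\bdk}$ by the uniform estimate above, and it agrees almost everywhere with $\widetilde a_\mu|_{\bdS^{\bdk}}$. Hence it is a continuous representative. Then $\widetilde a_\mu(z)$ equals this continuous function composed with the measurable map $z\mapsto\bdxi(z)$ from \eqref{eq:def-r(z)-xi(z)}, up to a null set, and is therefore measurable.

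The main obstacle is the bookkeeping of null sets. The definition \eqref{eq:def-tilde-amu} is only almost-everywhere, and it involves uncountably many $\mu$ (Remark~\ref{rem:measurability-tildeamu}). The key point is to fix a single representative $f_a$ for which \eqref{eq:limit-condition-symbol} holds for \emph{every} $\bdxi$. The sequences $\gamma_{f_a(\cdot,\bdxi),\lambda}$ are then defined pointwise in $\bdxi$, so no union over $\mu$ of exceptional sets is needed. The set where they differ from $\widetilde a_\ka|_{\bdS^{\bdk}}$ is a countable union over $\ka$ of null sets.
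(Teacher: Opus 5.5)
Your argument is correct and is essentially the paper's. The paper also combines linearity in $f_a$ with the bound $|\gamma_{b,\lambda}(\kappa)|\le\|b\|_\infty$, obtained (as in your alternative) from the fact that the radial kernel in \eqref{eq:def-a-tilde-kappa} integrates to $1$, and then transfers the estimate to every $\mu$. The only differences are cosmetic: you pass from $\kappa$ to $\mu$ via $|\psi_\mu(D)|\le\|D\|$ rather than by taking limits along a net $\kappa^\alpha\to\mu$, and you make explicit the choice of a single everywhere-defined representative $f_a$, which the paper leaves implicit.
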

\begin{proof}
    Equation \eqref{eq:def-a-tilde-kappa} implies that, for $\ka\in \bN_0^m$,
    \begin{align*}
        \big|\widetilde{a}_{\ka}|_{\bdS^{\bdk}}(\bdxi)-\widetilde{a}_{\ka}|_{\bdS^{\bdk}}(\bdxi')\big|
    &\leq
    C\int_{\tau(\bB^m)}|f_a(\bdr,\bdxi)-f_a(\bdr,\bdxi')|\bdr^{2\bdk+2\ka-\bdone}(1-|\bdr|_2^2)^\lambda d\bdr\\
    &\leq
    C
    \|f_a(\cdot,\bdxi)-f_a(\cdot,\bdxi')\|_{\infty}
    \int_{\tau(\bB^m)}\bdr^{2\bdk+2\ka-\bdone}(1-|\bdr|_2^2)^\lambda d\bdr\\
    &    \leq
    \|f_a(\cdot,\bdxi)-f_a(\cdot,\bdxi')\|_{\infty}.
    \end{align*}
    Here, $C=\frac{2^m\Gamma(n+|\kappa|+\lambda+1)}{\Gamma(\lambda+1)(\bdk+\ka-\bdone)!}$ and we used the fact that
    \[
    \frac{2^m\Gamma(n+|\kappa|+\lambda+1)}{\Gamma(\lambda+1)(\bdk+\ka-\bdone)!}
\int_{\tau({\mathbb B}^m)}
\bdr^{2\bdk+2\ka-\bdone}(1-|\bdr|_2^2)^\lambda d\bdr=1.
    \]
    Passing to the limit along a net $\ka^\alpha\to\mu$ gives
    \[
    |\widetilde{a}_\mu|_{\bdS^{\bdk}}(\bdxi)-\widetilde{a}_\mu|_{\bdS^{\bdk}}(\bdxi')|\leq
    \|f_a(\cdot,\bdxi)-f_a(\cdot,\bdxi')\|_{\infty}.
    \]
    Hence, $\widetilde{a}_\mu|_{\bdS^{\bdk}}$ has a continuous representative on $\bdS^{\bdk}$ and is therefore measurable. Consequently, $\widetilde{a}_\mu$ is measurable. The same estimate yields
    \[
    \big\|
    \widetilde{a}_\bullet|_{\bdS^{\bdk}}(\bdxi')
    -
    \widetilde{a}_\bullet|_{\bdS^{\bdk}}(\bdxi)
    \big\|_{\infty,M(\Tkqr)}
    \leq
    \|f_a(\cdot,\bdxi)-f_a(\cdot,\bdxi')\|_{\infty}.
    \]
    Since $a\in\contTm$, the right-hand side tends to zero as $\bdxi'\to\bdxi$ in $\bdS^{\bdk}$.
\end{proof}

\begin{cor}
\label{cor:uniform-approx-aka-amu}
Let $a\in\contTm$. Then the map
\[
M(\Tkqr)\ni \mu\longmapsto \widetilde{a}_\mu\in L^\infty(\bdB^{\bdk})
\]
is continuous. In particular, if $(\ka^\alpha)_\alpha$ is a net in $\bN_0^m$ converging to $\mu\in M(\Tkqr)$ then
\[
\lim_{\alpha}\|
\widetilde{a}_{\ka^\alpha}-\widetilde{a}_\mu
\|_{L^\infty(\bdB^{\bdk})}
=0.
\]
\end{cor}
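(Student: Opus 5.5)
The plan is to derive the statement from Proposition~\ref{prop:continuity-wildeamu} through a Dini/Arzelà-type compactness argument on the product $M(\Tkqr)\times\bdS^{\bdk}$. Since each $\widetilde{a}_\mu$ depends only on its $\bdS^{\bdk}$-component, we have
\[
\|\widetilde{a}_\mu-\widetilde{a}_\nu\|_{L^\infty(\bdB^{\bdk})}
=\|\widetilde{a}_\mu|_{\bdS^{\bdk}}-\widetilde{a}_\nu|_{\bdS^{\bdk}}\|_{L^\infty(\bdS^{\bdk})},
\]
and by Proposition~\ref{prop:continuity-wildeamu} both restrictions have continuous representatives. Hence the $L^\infty$-norm equals the supremum norm over $\bdS^{\bdk}$: a continuous function that is essentially bounded by $\varepsilon$ is bounded by $\varepsilon$ everywhere, because $\bdsig_{\bdk}$ charges every nonempty open set. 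It therefore suffices to show that $\mu\mapsto \widetilde{a}_\mu|_{\bdS^{\bdk}}\in C(\bdS^{\bdk})$ is continuous in the sup norm.

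Consider the function $F(\mu,\bdxi)=\widetilde{a}_\mu|_{\bdS^{\bdk}}(\bdxi)$ on $M(\Tkqr)\times\bdS^{\bdk}$, built from the continuous representatives. Two properties hold.
\begin{enumerate}
\item For fixed $\bdxi$, the map $\mu\mapsto F(\mu,\bdxi)$ is the Gelfand transform $\psi_\mu(\widetilde{a}_\bullet(\bdxi))$ of an element of $\Tkqr$, since $\widetilde{a}_\bullet(\bdxi)$ is an eigenvalue sequence of a $k$-quasi-radial symbol. It is therefore continuous on $M(\Tkqr)$.
\item For fixed $\mu$, the map $\bdxi\mapsto F(\mu,\bdxi)$ is continuous, with modulus of continuity $\omega(\bdxi,\bdxi')=\|f_a(\cdot,\bdxi)-f_a(\cdot,\bdxi')\|_\infty$. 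This modulus is independent of $\mu$, by the estimate in Proposition~\ref{prop:continuity-wildeamu}.
\end{enumerate}

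Now fix $\mu_0$ and $\varepsilon>0$. For each $\bdxi\in\bdS^{\bdk}$, choose an open neighbourhood $V_{\bdxi}$ on which $\omega(\bdxi,\cdot)<\varepsilon$. By compactness of $\bdS^{\bdk}$, finitely many points $\bdxi_1,\dots,\bdxi_N$ have neighbourhoods covering the sphere. By property~1, there is a neighbourhood $W$ of $\mu_0$ such that $|F(\mu,\bdxi_i)-F(\mu_0,\bdxi_i)|<\varepsilon$ for all $i$ and all $\mu\in W$.

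For any $\bdxi$, pick $i$ with $\bdxi\in V_{\bdxi_i}$. A three-term triangle inequality, using the uniform modulus from property~2 for both $\mu$ and $\mu_0$, gives
\[
|F(\mu,\bdxi)-F(\mu_0,\bdxi)|<3\varepsilon
\qquad\text{for all }\mu\in W .
\]
This proves continuity. The statement about nets then follows because $\ka^\alpha\to\mu$ in $M(\Tkqr)$, with $\widetilde{a}_{\ka^\alpha}$ being the values of this map at the points $\ka^\alpha$.

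The main obstacle is a representative issue: one must check that the continuous representative of $\widetilde{a}_\ka|_{\bdS^{\bdk}}$ agrees pointwise with the values $\gamma_{f_a(\cdot,\bdxi),\lambda}(\ka)$. This is needed so that property~1 holds at every $\bdxi$, not merely almost every $\bdxi$. It works because \eqref{eq:def-a-tilde-kappa} is defined pointwise from the representative of $f_a$ satisfying \eqref{eq:limit-condition-symbol}, and that pointwise definition is already continuous in $\bdxi$. Consequently no almost-everywhere ambiguity arises, and passing to limits along nets is legitimate for every $\bdxi$.
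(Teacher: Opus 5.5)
Your proof is correct and is essentially the paper's argument. The uniform-in-$\mu$ modulus of continuity from Proposition~\ref{prop:continuity-wildeamu} gives a finite cover of $\bdS^{\bdk}$, and continuity of the finitely many Gelfand transforms $\mu\mapsto\widetilde{a}_\mu|_{\bdS^{\bdk}}(\bdxi_\ell)$ gives a neighbourhood of $\mu_0$; a three-term triangle inequality then finishes. Your remarks on fixing the pointwise representative of $f_a$, so that evaluating at the fixed points $\bdxi_\ell$ is legitimate, and on comparing the $L^\infty$ and sup norms spell out what the paper leaves implicit.
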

\begin{proof}
    Let $\mu\in M(\Tkqr)$ and $\varepsilon>0$. By the preceding proposition, there exists $\delta>0$ such that
    \[
    \bigg\| \widetilde{a}_\bullet|_{\bdS^{\bdk}}(\bdxi)-\widetilde{a}_\bullet|_{\bdS^{\bdk}}(\bdxi')
    \bigg\|_{\infty,M(\Tkqr)}<\frac{\varepsilon}{3},
\]
for every $\bdxi,\bdxi'\in\bdS^{\bdk}$ such that $|\bdxi-\bdxi'|<\delta$.
By compactness, $\bdS^{\bdk}$ admits a finite cover $\{B(\bdxi_\ell,\delta)\cap \bdS^{\bdk}\}_{\ell=1}^L$, where each $B(\bdxi_\ell,\delta)$ is the open ball in $\bC^n$ of radius $\delta$ centered at $\bdxi_\ell\in \bdS^{\bdk}$.

Each function $\widetilde{a}_\bullet|_{\bdS^{\bdk}}(\bdxi_\ell)$ belongs to $\Tkqr$, $\ell=1,\ldots,L$. Hence, consider the open set $V$ in the topology of $M(\Tkqr)$ given by
\[
V=
\bigg\{
\eta\in M(\Tkqr)\colon
\big|\widetilde{a}_\mu|_{\bdS^{\bdk}}(\bdxi_{\ell})-
\widetilde{a}_\eta|_{\bdS^{\bdk}}(\bdxi_\ell)\big|<\varepsilon/3,\ \forall \ell=1,\ldots,L
\bigg\}.
\]

Let $\eta\in V$. For any $z\in\bdB^{\bdk}$ with $z_{(j)}\neq 0$ for all $j=1,\ldots,m$, there exists $\ell$ such that $|\bdxi(z)-\bdxi_\ell|<\delta$ and hence
\begin{align*}
\bigg|\widetilde{a}_\mu(z)
-\widetilde{a}_{\eta}(z)\bigg|
&=
\bigg|\widetilde{a}_\mu|_{\bdS^{\bdk}}\big(\bdxi(z)\big)
-\widetilde{a}_{\eta}|_{\bdS^{\bdk}}\big(\bdxi(z)\big)\bigg|\\
&
\leq
\bigg|\widetilde{a}_\mu|_{\bdS^{\bdk}}\big(\bdxi(z)\big)
-\widetilde{a}_{\mu}|_{\bdS^{\bdk}}\big(\bdxi_\ell\big)\bigg|
+\bigg|\widetilde{a}_\mu|_{\bdS^{\bdk}}\big(\bdxi_\ell\big)
-\widetilde{a}_{\eta}|_{\bdS^{\bdk}}\big(\bdxi_\ell\big)\bigg|\\
&
+
\bigg|\widetilde{a}_{\eta}|_{\bdS^{\bdk}}\big(\bdxi_\ell\big)
-\widetilde{a}_{\eta}|_{\bdS^{\bdk}}\big(\bdxi(z)\big)\bigg|<\varepsilon.\qedhere
\end{align*}
\end{proof}

\begin{cor}
\label{cor:Tmu-to-Tkappaalpha}
Let $a\in\contTm$. Then the map
\[
M(\Tkqr)\ni\mu\longmapsto \bdT_{\widetilde{a}_\mu}\in \cL(\cA^2(\bdB^{\bdk}))
\]
is continuous in the operator norm.
In particular, if $(\ka^\alpha)_\alpha$ is a net in $\bN_0^m$ converging to $\mu\in M(\Tkqr)$, then the net $(\bdT_{\widetilde{a}_{\ka^\alpha}})_\alpha$ converges to
$\bdT_{\widetilde{a}_\mu}$ in operator norm.
\end{cor}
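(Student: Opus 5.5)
The plan is to deduce this directly from Corollary~\ref{cor:uniform-approx-aka-amu}, using the elementary fact that the Toeplitz quantization $\varphi\mapsto\bdT_\varphi$ on $\cA^2(\bdB^{\bdk})$ is linear and contractive from $L^\infty(\bdB^{\bdk})$ to $\cL(\cA^2(\bdB^{\bdk}))$.

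First, I would check that $\bdT_{\widetilde{a}_\mu}$ is well defined for every $\mu\in M(\Tkqr)$. By Proposition~\ref{prop:continuity-wildeamu}, each $\widetilde{a}_\mu$ is measurable. It is also bounded, with $\|\widetilde{a}_\mu\|_\infty\le\|a\|_\infty$, because each $\widetilde{a}_\ka$ satisfies this bound by Theorem~\ref{thm:Tna=oplus-T-polyball}(1) and the bound passes to limits along nets. Hence $\widetilde{a}_\mu\in L^\infty(\bdB^{\bdk})$.

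Next, for $\varphi,\psi\in L^\infty(\bdB^{\bdk})$ and $f\in\cA^2(\bdB^{\bdk})$, linearity gives $\bdT_\varphi-\bdT_\psi=\bdT_{\varphi-\psi}$. Since $P_{\cA^2(\bdB^{\bdk})}$ is an orthogonal projection, we have
\[
\|\bdT_{\varphi-\psi}f\|\le\|(\varphi-\psi)f\|_{L^2(\bdB^{\bdk},\bdv_{\bdk})}\le\|\varphi-\psi\|_\infty\|f\|.
\]
Consequently,
\[
\|\bdT_{\widetilde{a}_\mu}-\bdT_{\widetilde{a}_\eta}\|\le\|\widetilde{a}_\mu-\widetilde{a}_\eta\|_{L^\infty(\bdB^{\bdk})}
\]
for all $\mu,\eta\in M(\Tkqr)$.

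Combining this estimate with the continuity of $\mu\mapsto\widetilde{a}_\mu$ in $L^\infty(\bdB^{\bdk})$ from Corollary~\ref{cor:uniform-approx-aka-amu} gives the norm continuity of $\mu\mapsto\bdT_{\widetilde{a}_\mu}$.

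For the statement about nets, I would use that $\bN_0^m$ sits inside $M(\Tkqr)$ and that, for $\ka\in\bN_0^m$, the extended symbol $\widetilde{a}_\ka$ defined through $\psi_\ka$ agrees with the original $\widetilde{a}_\ka$, since $\psi_\ka$ is evaluation at $\ka$. Continuity then gives $\bdT_{\widetilde{a}_{\ka^\alpha}}\to\bdT_{\widetilde{a}_\mu}$ in operator norm. Alternatively, this follows directly from the second part of Corollary~\ref{cor:uniform-approx-aka-amu} together with the contraction estimate.

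No step is a real obstacle. The only point that needs care is that $\widetilde{a}_\mu$ is a genuine element of $L^\infty$, meaning it is measurable and not just defined pointwise almost everywhere. Proposition~\ref{prop:continuity-wildeamu} already settles this.
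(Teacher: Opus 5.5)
Your proposal is correct and follows the paper's own proof: it combines the $L^\infty$-continuity of $\mu\mapsto\widetilde a_\mu$ from Corollary~\ref{cor:uniform-approx-aka-amu} with the contraction estimate $\|\bdT_{\widetilde a_\mu}-\bdT_{\widetilde a_\eta}\|\leq\|\widetilde a_\mu-\widetilde a_\eta\|_{L^\infty(\bdB^{\bdk})}$. The extra checks you include are sound: measurability comes from Proposition~\ref{prop:continuity-wildeamu}, the $L^\infty$ bound on $\widetilde a_\mu$, and the agreement of $\widetilde a_\ka$ with $\psi_\ka$-evaluation on $\bN_0^m$.
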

\begin{proof}
This follows from Corollary~\ref{cor:uniform-approx-aka-amu} and the estimate
\[
\|
\bdT_{\widetilde a_\mu}-
\bdT_{\widetilde a_\eta}
\|
\leq
\|\widetilde a_{\mu}-\widetilde a_\eta\|_{L^\infty(\bdB^{\bdk})}.\qedhere
\]
\end{proof}

\begin{ex}
\label{ex:example2-asymptotics}
    Let $n=4$, $\lambda=0$, $m=2$ and $\bdk=(2,2)$ as in Examples \ref{ex:example1-symbol} and \ref{ex:example2-symbol}.

    As in the latter example, consider the function $\psi$ with $f_\psi(\bdr,\bdxi)=r_1^{2\alpha} G_1(\bdxi)
    +r_2^{2\beta} G_2(\bdxi)$. Note that this function belongs to $\contTm$ if and only if both functions $G_j$ are continuous on $S^3$.

    In this case, assuming for simplicity that $\alpha,\beta>0$, by Stirling's approximation and \eqref{eq:example2-formula}, one has the following asymptotic values:
    \[
    \widetilde{\psi}_{\ka}(z)\approx
    \begin{cases}
        G_2(\bdxi),& \ka_1\text{ fixed,}\ \ka_2\to\infty,\\
        G_1(\bdxi),& \ka_2\text{ fixed,}\ \ka_1\to\infty,\\
        (\ka_1/|\ka|)^\alpha G_1(\bdxi)
        +
        (\ka_2/|\ka|)^\beta G_2(\bdxi),&
        \ka_1,\ka_2\to\infty.
    \end{cases}
    \]
Thus, the family $(\bdT_{\widetilde{\psi}_\mu})_{\mu}$ is a continuous family of linear combinations of the operators $\bdT_{G_1}$ and $\bdT_{G_2}$.
\end{ex}

\begin{ex}
\label{ex:example1-asymptotics}
    Consider the special case of the symbols $\phi$ from Example~\ref{ex:example1-symbol}:
    \[
    f_\phi(\bdr,\bdxi)=r_1^{iH_1(\bdxi)}
    r_2^{i H_2(\bdxi)}.
    \]
    In general, these symbols do not belong to $\contTm$, even assuming that the functions $H_j$ are continuous. Indeed, this follows from
    \[
    |r^{ih}-r^{ih'}|=|1-e^{i(h-h')\log r}|,
    \]
    which does not converge uniformly to zero as $h\to h'$.

    However, as before, the function $M(\Tkqr)\ni\mu\mapsto\widetilde{\phi}_\mu$ is still well-defined (though not obviously measurable) by \eqref{eq:def-tilde-amu}.
    Note that, by Stirling's approximation and \eqref{eq:example1-formula}, one has
    \[
    \widetilde{\phi}_\ka(z)\approx
    \begin{cases}
        \displaystyle\frac{\Gamma(\frac{i}{2}H_1(\bdxi)+\ka_1+2)}{\Gamma(\ka_1+2)}\ka_2^{-i/2 H_1(\bdxi)},& \ka_1\text{ fixed},\ \ka_2\to\infty,\\
        \displaystyle\frac{\Gamma(\frac{i}{2}H_2(\bdxi)+\ka_2+2)}{\Gamma(\ka_2+2)}\ka_1^{-i/2 H_2(\bdxi)},& \ka_2\text{ fixed},\ \ka_1\to\infty.
    \end{cases}
    \]
    If both $\ka_1,\ka_2\to\infty$, one gets much more involved expressions.

    Thus, the symbols $\widetilde{\phi}_\ka$ have nontrivial oscillatory behavior and, although well-defined, there seems to be no concrete representation for $\widetilde{\phi}_\mu$ with $\mu\in M(\Tkqr)\backslash\bN_0^2$.
\end{ex}

\section{Operator algebras}
\label{sec:Operator algebras}

In this section, we apply the preceding framework to describe commutative Banach algebras generated by $\bT^m$-invariant Toeplitz operators. More specifically, we study the algebra $\Tkqrph$ introduced in \cite{Quiroga2021}, which generalizes the commutative algebras from earlier works.

We work under the continuity assumptions in the preceding section. However, our results can be restated for more general operators whenever they give rise to continuous families of operators on $M(\Tkqr)$.

\subsection{Commutative Banach algebras}

For each $j\in\{1,\ldots,m\}$, consider the group
\begin{equation}
\label{eq:def-group-U-T-U}
\UkjT\coloneqq
U(k_1)\times\cdots \times U(k_{j-1})\times \mathbb T I_{k_j}\times U(k_{j+1})\times\cdots\times
U(k_m),
\end{equation}
where, as before, $U(d)$ denotes the group of $d\times d$ unitary matrices. Here, $I_{k_j}$ denotes the $k_j\times k_j$ identity matrix.

The group $\bT^m$ is a subgroup of $\UkjT$, while the latter is a subgroup of $U(\bdk)$.
Given $j\in\{1,\ldots,m\}$,
let $\LinftyUkjT$ denote the space of all functions in $L^\infty(\bB^n)$ invariant under $\UkjT$.
A direct verification shows that, for $c\in\LinftyUkjT$, there is an essentially unique function $g_c\in L^\infty(\tau(\bB^m)\times S^{2k_j-1})$ such that
\[
c(z)=g_c\left(\bdr(z),\frac{z_{(j)}}{|z_{(j)}|}\right),
\qquad z\in \bB^n\setminus\{z_{(j)}=0\}.
\]
Since $\bT^m\subset\UkjT$,
\[
\LinftyUkjT\subset\LinftyTm.
\]
Note that the associated function $f_c$ defined by \eqref{eq:def-associated-fa-Tm-invariant} satisfies
\[
f_c(\bdr,\bdxi)=g_c(\bdr,\xi_j),
\]
for almost every $\bdr\in\tau(\bB^m)$ and $\bdxi=(\xi_1,\ldots,\xi_m)\in \bdS^{\bdk}$.

For each $j\in\{1,\ldots,m\}$, we define the space
\[
\begin{aligned}
\LinftyjT
={}&\bigg\{f\in L^\infty(\bB^{k_j})\colon\\
&\qquad
f(t^{-1}z)=f(z)\text{ for a.e. }z\in \bB^{k_j}
\text{ for every }t\in \bT
\bigg\}.
\end{aligned}
\]

\begin{prop}
\label{prop:hat-cka-UkjT}
    Let $j\in\{1,\ldots,m\}$ and $c_j\in \LinftyUkjT$. Then the operator $T_{c_j}$ is unitarily equivalent to the direct sum
    \[
    \bigoplus_{\ka\in\bN_0^m}I_{H_{k_1,\ka_1}}\otimes\cdots\otimes T_{k_j,\widehat{c}_{j,\ka}}|_{H_{k_j,\ka_j}}\otimes\cdots\otimes I_{H_{k_m,\ka_m}},
    \]
    where $\widehat{c}_{j,\ka}\in \LinftyjT$ is defined almost everywhere by
    \begin{align*}
    \widehat{c}_{j,\ka}(z_{(j)})
    &={}
    \frac{2^m\Gamma(n+|\ka|+\lambda+1)}{\Gamma(\lambda+1)(\bdk+\ka-\bdone)!}
    \int_{\tau({\mathbb B}^m)}
    g_{c_j}\left(\bdr,\frac{z_{(j)}}{|z_{(j)}|}\right)
    \bdr^{2\bdk+2\ka-\bdone}(1-|\bdr|_2^2)^\lambda d\bdr.
    \end{align*}
    Moreover, for every $\ka\in\bN_0^m$, the following properties hold:
    \begin{enumerate}
        \item $\|\widehat{c}_{j,\ka}\|_\infty\leq \|c_j\|_\infty$.
        \item The function $\widehat{c}_{j,\ka}$ depends only on the angular variable $z_{(j)}/|z_{(j)}|\in S^{2k_j-1}$ of $z_{(j)}\in \bB^{k_j}$.
    \end{enumerate}
\end{prop}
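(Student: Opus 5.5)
The plan is to specialize Theorem~\ref{thm:Tna=oplus-T-polyball} to $a=c_j$, which is allowed because $\LinftyUkjT\subset\LinftyTm$. That theorem gives
\[
UT_{c_j}U^*=\bigoplus_{\ka\in\bN_0^m}\bdT_{\widetilde{(c_j)}_\ka}|_{\bdH_{\bdk,\ka}},
\]
so it remains to identify each summand. Since $f_{c_j}(\bdr,\bdxi)=g_{c_j}(\bdr,\xi_j)$, substituting into \eqref{eq:def-a-tilde-kappa} shows that $\widetilde{(c_j)}_\ka(z)$ depends only on $\bdxi(z)_j=z_{(j)}/|z_{(j)}|$. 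It coincides with $\widehat{c}_{j,\ka}(z_{(j)})$ as defined in the statement. In other words, $\widetilde{(c_j)}_\ka=1\otimes\cdots\otimes\widehat{c}_{j,\ka}\otimes\cdots\otimes 1$ as a function on $\bdB^{\bdk}$. Properties~1 and~2 then follow at once. Property~1 is item~1 of Theorem~\ref{thm:Tna=oplus-T-polyball}, or directly the normalization identity used in the proof of Proposition~\ref{prop:continuity-wildeamu}. Property~2 holds because the only angular dependence enters through $g_{c_j}(\bdr,z_{(j)}/|z_{(j)}|)$. The $\bT$-invariance of $\widehat{c}_{j,\ka}$ comes from the $\bT I_{k_j}$-invariance of $c_j$, which makes $g_{c_j}(\bdr,t\xi_j)=g_{c_j}(\bdr,\xi_j)$.

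Next I would use the tensor-product identity $\bdT_{\varphi_1\otimes\cdots\otimes\varphi_m}=T_{k_1,\varphi_1}\otimes\cdots\otimes T_{k_m,\varphi_m}$ with $\varphi_i=1$ for $i\neq j$. This gives
\[
\bdT_{\widetilde{(c_j)}_\ka}=I\otimes\cdots\otimes T_{k_j,\widehat{c}_{j,\ka}}\otimes\cdots\otimes I
\]
on $\cA^2(\bB^{k_1})\otimes\cdots\otimes\cA^2(\bB^{k_m})$. The key remaining point is that $T_{k_j,\widehat{c}_{j,\ka}}$ leaves $H_{k_j,\ka_j}$ invariant. This holds because its symbol is $\bT$-invariant, so the operator commutes with the circle action $f\mapsto f(t^{-1}\cdot)$ on $\cA^2(\bB^{k_j})$. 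The spaces $H_{k_j,d}$ are exactly the isotypic components of that action, which is the case $m=1$ of Proposition~\ref{prop:T-Uk-Tm-as-direct-sum}.

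Under $\bdH_{\bdk,\ka}\cong H_{k_1,\ka_1}\otimes\cdots\otimes H_{k_m,\ka_m}$, the restriction of the tensor operator to $\bdH_{\bdk,\ka}$ is therefore
\[
I_{H_{k_1,\ka_1}}\otimes\cdots\otimes T_{k_j,\widehat{c}_{j,\ka}}|_{H_{k_j,\ka_j}}\otimes\cdots\otimes I_{H_{k_m,\ka_m}}.
\]
Summing over $\ka$ and conjugating by $U$ yields the claimed unitary equivalence.

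The only delicate step is the almost-everywhere bookkeeping. One must check that the function $z\mapsto\widetilde{(c_j)}_\ka(z)$, defined on $\bdB^{\bdk}$ off the null set $\{z_{(i)}=0 \text{ for some } i\}$, genuinely equals the tensor product of constant functions with $\widehat{c}_{j,\ka}$, and that $\widehat{c}_{j,\ka}$ is measurable. Both follow from Fubini's theorem applied to the measurable function $g_{c_j}$ on $\tau(\bB^m)\times S^{2k_j-1}$. Everything else is a direct substitution into results already established.
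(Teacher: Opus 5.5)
Your proof is correct and follows the paper's argument: specialize Theorem~\ref{thm:Tna=oplus-T-polyball} to $a=c_j$, observe that $\widetilde{(c_j)}_\ka$ depends only on $z_{(j)}/|z_{(j)}|$ and hence equals $\widehat{c}_{j,\ka}(z_{(j)})$, apply the tensor-product identity for $\bdT_{\varphi_1\otimes\cdots\otimes\varphi_m}$, and restrict to $\bdH_{\bdk,\ka}\cong H_{k_1,\ka_1}\otimes\cdots\otimes H_{k_m,\ka_m}$. Your explicit checks are details the paper leaves implicit: that $T_{k_j,\widehat{c}_{j,\ka}}$ leaves $H_{k_j,\ka_j}$ invariant (via the $\bT$-invariance of $\widehat{c}_{j,\ka}$), and the measurability bookkeeping via Fubini.
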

\begin{proof}
    We know from Theorem~\ref{thm:Tna=oplus-T-polyball} that $UT_{n,c_j}U^*=\bigoplus_{\ka\in\bN_0^m}\bdT_{\widetilde{c_j}_\ka}|_{\bdH_{\bdk,\ka}}$, where $\widetilde{c_j}_\ka$ is given by \eqref{eq:def-a-tilde-kappa}.
    In this case, the $\bdxi(z)$-dependence of $\widetilde{c_j}_\ka(z)$ is given only through the $j$th component of $\bdxi(z)$. Thus,
    \[
    \widetilde{c_j}_\ka(z)=
    \widehat{c}_{j,\ka}(z_{(j)}),
    \]
    and, under the natural tensor product structure of $\cA^2(\bdB^{\bdk})$,
    \[
    \bdT_{\widetilde{c_j}_\ka}
    =
    I\otimes\cdots\otimes T_{k_j,\widehat{c}_{j,\ka}}\otimes\cdots\otimes I.
    \]
    Restricting to $\bdH_{\bdk,\ka}\cong H_{k_1,\ka_1}\otimes\cdots\otimes H_{k_m,\ka_m}$ yields the result.
\end{proof}

Whenever $c_j\in\contTm$, for every $\mu\in M(\Tkqr)$ we may define the function $\widehat{c}_{j,\mu}$ by
\begin{equation}
\label{eq:def-cmuj-UkjT}
\widehat{c}_{j,\mu}(z)
=
\psi_\mu\big(\widehat{c}_{j,\bullet}(z)\big),\qquad \text{for almost every }z\in\bB^{k_j}.
\end{equation}
Since $\widehat{c}_{j,\ka}$ is the one-variable factor of $\widetilde{c_j}_\ka$ as a tensor product of functions, the results from Section~\ref{sec:cont-families} imply that $\widehat{c}_{j,\mu}$ is bounded and measurable, with
\[
\|\widehat{c}_{j,\mu}\|_\infty\leq \|c_j\|_\infty.
\]
Moreover, its boundary value $\widehat{c}_{j,\mu}|_{S^{2k_j-1}}$ is continuous on $S^{2k_j-1}$; indeed, by Corollary~\ref{cor:uniform-approx-aka-amu}, it is the uniform limit on the sphere of the boundary values of the functions $\widehat{c}_{j,\ka}$.

\begin{cor}
\label{cor:Tkqrph-is-commutative}
    For every $j\in\{1,\ldots,m\}$ fix a symbol $c_j\in\LinftyUkjT$. Then the Banach algebra
    \[
    \Tkqrph=\cB(\Tkqr,T_{c_1},\ldots,T_{c_m})
    \]
    generated by $\Tkqr$ and the operators $T_{c_1},\ldots,T_{c_m}$ is commutative.
\end{cor}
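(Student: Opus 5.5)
It suffices to show that the generators commute pairwise. A bounded operator commutes with every element of the closed unital algebra generated by a set as soon as it commutes with each generator. So pairwise commutativity of the generators of $\Tkqr$ and of $T_{c_1},\ldots,T_{c_m}$ implies that the generated Banach algebra is commutative. Elements of $\Tkqr$ commute among themselves, since $\Tkqr$ is commutative. We therefore have to check two things: each $T_{c_j}$ commutes with every $D_\gamma\in\Tkqr$, and $T_{c_i}$ commutes with $T_{c_j}$ for $i\neq j$. The case $i=j$ is trivial.

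We conjugate everything by the unitary $U$ of \eqref{eq:def-op-U}. By Proposition~\ref{prop:hat-cka-UkjT}, $UT_{c_j}U^*$ is the direct sum over $\ka\in\bN_0^m$ of the operators
\[
A_{j,\ka}\coloneqq I_{H_{k_1,\ka_1}}\otimes\cdots\otimes T_{k_j,\widehat{c}_{j,\ka}}|_{H_{k_j,\ka_j}}\otimes\cdots\otimes I_{H_{k_m,\ka_m}}
\]
acting on $\bdH_{\bdk,\ka}\cong H_{k_1,\ka_1}\otimes\cdots\otimes H_{k_m,\ka_m}$. On the other hand, $UD_\gamma U^*=\bigoplus_\ka\gamma(\ka)I_{\bdH_{\bdk,\ka}}$, since $D_\gamma$ acts as a scalar on each $H_{n,\ka}$ and $U$ maps $H_{n,\ka}$ onto $\bdH_{\bdk,\ka}$. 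Both operators are block diagonal with respect to the same orthogonal decomposition, and on each block one of them is a scalar. Hence $T_{c_j}$ commutes with every element of $\Tkqr$. Alternatively, this follows directly from Proposition~\ref{prop:T-Uk-Tm-as-direct-sum}, because $c_j$ is $\bT^m$-invariant and the elements of $\Tkqr$ are scalar on each $H_{n,\ka}$.

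For $i\neq j$, both $UT_{c_i}U^*$ and $UT_{c_j}U^*$ are block diagonal with respect to $\bigoplus_\ka\bdH_{\bdk,\ka}$, so it suffices to compare them block by block. On the block $\bdH_{\bdk,\ka}$, the operator $A_{i,\ka}$ acts nontrivially only in the $i$th tensor factor and $A_{j,\ka}$ only in the $j$th. Elementary tensors give
\[
A_{i,\ka}A_{j,\ka}=I\otimes\cdots\otimes T_{k_i,\widehat c_{i,\ka}}|_{H_{k_i,\ka_i}}\otimes\cdots\otimes T_{k_j,\widehat c_{j,\ka}}|_{H_{k_j,\ka_j}}\otimes\cdots\otimes I=A_{j,\ka}A_{i,\ka}.
\]
Two points need care here. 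First, the restriction $T_{k_j,\widehat c_{j,\ka}}|_{H_{k_j,\ka_j}}$ must map $H_{k_j,\ka_j}$ into itself; this holds because $\widehat c_{j,\ka}$ is $\bT$-invariant, so the argument of Proposition~\ref{prop:T-Uk-Tm-as-direct-sum} applies with $m=1$. Second, the blocks are finite-dimensional, so the identity holds exactly with no closure issues. Summing over $\ka$ shows that $T_{c_i}$ and $T_{c_j}$ commute.

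The only step requiring any care is the correct bookkeeping of the tensor decomposition, namely that the symbol of $T_{c_j}$ on each block depends only on the $j$th factor. This is precisely the content of Proposition~\ref{prop:hat-cka-UkjT}, so no further obstacle is expected. No continuity assumption on the $c_j$ is needed.
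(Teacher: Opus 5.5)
Your proposal is correct and follows essentially the same route as the paper: you conjugate by $U$, use Proposition~\ref{prop:hat-cka-UkjT} together with the fact that elements of $\Tkqr$ are scalar on each block $H_{n,\ka}$, and observe that $T_{c_i}$ and $T_{c_j}$ act on different tensor factors when $i\neq j$. Your extra remarks fill in details the paper leaves implicit, namely the passage from commuting generators to a commutative closed algebra and the invariance of $H_{k_j,\ka_j}$ under $T_{k_j,\widehat c_{j,\ka}}$.
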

\begin{proof}
Under the unitary equivalence of Theorem~\ref{thm:Tna=oplus-T-polyball}, every element of $\Tkqr$ is diagonal with respect to the decomposition
$\cA^2_\lambda(\bB^n)=\bigoplus_{\ka\in\bN_0^m}H_{n,\ka}$,
and it acts as a scalar on each block $H_{n,\ka}$. Hence every element of $\Tkqr$ commutes with every $T_{c_j}$.

By Proposition~\ref{prop:hat-cka-UkjT}, on each block
\[
H_{n,\ka}\cong H_{k_1,\ka_1}\otimes\cdots\otimes H_{k_m,\ka_m},
\]
the operator $T_{c_j}$ acts as the identity on all tensor factors except the $j$-th one, where it acts as $T_{k_j,\widehat c_{j,\ka}}|_{H_{k_j,\ka_j}}$. Therefore, if $i\neq j$, the restrictions of $T_{c_i}$ and $T_{c_j}$ act on different tensor factors and commute on every block $H_{n,\ka}$. It follows that all generators commute, and so the Banach algebra generated by them is commutative.
\end{proof}

\begin{ex}
    Consider again the setting $n=4$, $\lambda=0$, $m=2$ and $\bdk=(2,2)$.
    \begin{itemize}
        \item Let $\phi$ be the function from Example \ref{ex:example1-symbol} where the functions $H_j\in C(S^3\times S^3)$ depend only on the first component $\xi_1$ of $\bdxi$. Then $\phi\in L^\infty(\bB^4)^{U(\bdk,1,\bT)}$. This function does not belong to $\contTm$ unless both $H_j$ are constant (see Example \ref{ex:example1-asymptotics}).

        \item Similarly, let $\psi$ be as in Example \ref{ex:example2-symbol}, where both $G_j\in C(S^3\times S^3)$ depend only on the first component of $\bdxi$. Then $\psi\in L^\infty(\bB^4)^{U(\bdk,1,\bT)}$. If $\psi$ is continuous, then one has $\psi\in \contTm\cap L^\infty(\bB^4)^{U(\bdk,1,\bT)}$.
        
    \end{itemize}
    Analogous examples are obtained for the case $j=2$.

    In general, the corresponding Banach algebras $\Tkqrph$ do not belong to any of the classes covered by previous works \cite{RodriguezRodriguezPHD2024,Bauer_Rodriguez2022,GarciaVasilevski2015,BauerVasilevski2015,BauerVasilevski2013}.
\end{ex}

\begin{rem}
\label{rem:quasi-homogeneous-history}
The symbols in $\LinftyUkjT$, and hence the associated Banach algebras $\Tkqrph$, generalize several constructions from earlier works.

    Early examples include the quasi-homogeneous symbols introduced
in \cite{Vasilevski2010}. In the case $m=1$, one fixes
$h\in\{1,\ldots,n-1\}$ and multi-indices $p,q\in\bN_0^n$ of the form
\[
p=(p_1,\ldots,p_h,0,\ldots,0),
\qquad
q=(0,\ldots,0,q_{h+1},\ldots,q_n),
\qquad |p|=|q|.
\]
The associated quasi-homogeneous symbol is
\begin{equation}
\label{eq:def-quasi-homogeneous}
c(z)=
\left(\frac{z}{|z|}\right)^p
\overline{
\left(\frac{z}{|z|}\right)^q},
\qquad z\in\bB^n.
\end{equation}
These are functions in $\contTm\cap\LinftyUkjT$.

For a general partition $\bdk=(k_1,\ldots,k_m)$, the construction is performed
on each group of variables. Thus one obtains symbols of the form
\[
c(z)=
\prod_{j=1}^m
\left(\frac{z_{(j)}}{|z_{(j)}|}\right)^{p_{(j)}}
\overline{
\left(\frac{z_{(j)}}{|z_{(j)}|}\right)^{q_{(j)}}},
\]
where the multi-indices $p_{(j)},q_{(j)}\in\bN_0^{k_j}$ satisfy the analogous
orthogonality and balance conditions in each block. The resulting commutative Banach algebras are of a slightly different type than the algebra $\Tkqrph$ considered in this work, as they admit more generators.
However, their Gelfand theory follows easily from our framework after minor modifications (see \cite{RodriguezRodriguezPHD2024,Bauer_Rodriguez2022} and the references therein).

A substantially broader class of symbols was introduced in \cite{RodriguezVasilevski2021}. They were called \emph{(II)-pseudo-homogeneous} in \cite{RodriguezRodriguezPHD2024}. In this case, the
restriction to functions of the special form above was removed. For each block
$j$, one considers bounded functions depending only on the angular variable
$z_{(j)}/|z_{(j)}|$ and invariant under the scalar action of $\bT$. Equivalently,
one takes symbols of the form
\[
c_j(z)=
f_{c_j}\left(\frac{z_{(j)}}{|z_{(j)}|}\right),
\]
where $f_{c_j}\in L^\infty(S^{2k_j-1})$ satisfies
\[
f_{c_j}(t\xi)=f_{c_j}(\xi),
\qquad t\in\bT,\ \xi\in\partial\bB^{k_j}.
\]
The corresponding Gelfand theory was studied in detail in
\cite{RodriguezVasilevski2021,RodriguezRodriguezPHD2024}.
We remark that, although the class $\LinftyUkjT$ is much larger, allowing quasi-radial dependence of the symbols, the induced symbols $\widehat{c}_{j,\mu}$ are (II)-pseudo-homogeneous.

In \cite{Quiroga2021}, the author referred to the symbols in $\LinftyUkjT$ as
quasi-homogeneous. To avoid confusion with the earlier terminology, we do not give these symbols a separate name. 
Yet, we keep the notation from previous works $\Tkqrph$ to refer to the
Banach algebras generated by $\Tkqr$ and Toeplitz operators with such symbols (where ``ph'' stands for ``pseudo-homogeneous'').
\end{rem}

The remainder of the paper describes the Gelfand theory of the algebra $\Tkqrph$ (under the angular continuity assumptions). As special cases, our results recover the descriptions of the commutative Banach algebras considered in earlier works (\cite{BauerVasilevski2012,BauerVasilevski2013,BauerVasilevski2015,Bauer_Rodriguez2022,RodriguezVasilevski2021,RodriguezRodriguezPHD2024}).

\subsection{\texorpdfstring{On $\bT$-invariant Toeplitz operators with continuous symbols and Calkin algebras}{On T-invariant Toeplitz operators with continuous symbols and Calkin algebras}}
\label{sec:Tinvariant-algebras-continuous}

Before proceeding with our analysis, it will be useful to study operator algebras generated by $\bT$-invariant Toeplitz operators acting on a Bergman space $\cA^2(\bB^d)$.
Here $d\in \bN$ denotes a general dimension that will be specialized later.

Let $\varphi\in L^\infty(\bB^d)$. We say that $\varphi$ admits a continuous boundary value on $\partial\bB^d=S^{2d-1}$ if there exist a measurable representative $\varphi_0$ of $\varphi$ and a function $h\in C(S^{2d-1})$ such that
\begin{equation}
    \label{eq:extension-to-boundary}
    \|
    \varphi_0(r\cdot)-h\|_{\infty,S^{2d-1}}\to0,\quad r\to 1^{-}.
\end{equation}
In this case, the function $h$ is unique and will be denoted by $\varphi|_{S^{2d-1}}$. Whenever radial restrictions are used, we fix such a representative and continue to denote it by $\varphi$.
\begin{lem}
\label{lem:Tvarphi-Tvarphi'-compact}
    Let $\varphi\in L^\infty(\bB^d)$ and suppose that it satisfies \eqref{eq:extension-to-boundary}. If $\varphi'$ is any other function with continuous boundary values such that $\varphi|_{S^{2d-1}}=\varphi'|_{S^{2d-1}}$, then
    \[
    T_{d,\varphi}-T_{d,\varphi'}\in\cK(\cA^2(\bB^d)).
    \]
\end{lem}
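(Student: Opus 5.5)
By linearity, $T_{d,\varphi}-T_{d,\varphi'}=T_{d,\psi}$ with $\psi=\varphi-\varphi'$. Since both functions satisfy \eqref{eq:extension-to-boundary} with the same boundary function, $\psi$ admits continuous boundary values and $\psi|_{S^{2d-1}}=0$. Concretely, with the fixed representatives,
\[
\varepsilon(r)\coloneqq\|\psi(r\cdot)\|_{\infty,S^{2d-1}}\le \|\varphi(r\cdot)-h\|_{\infty}+\|\varphi'(r\cdot)-h\|_\infty\longrightarrow 0,\qquad r\to1^-,
\]
where $h=\varphi|_{S^{2d-1}}=\varphi'|_{S^{2d-1}}$. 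It therefore suffices to show that a Toeplitz operator whose bounded symbol tends to zero uniformly at the boundary is compact.

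To prove this, we fix $\rho\in(0,1)$ and split $\psi=\psi\chi_{\rho\bB^d}+\psi\chi_{\bB^d\setminus\rho\bB^d}$, which gives $T_{d,\psi}=T_{d,\psi\chi_{\rho\bB^d}}+T_{d,\psi\chi_{\bB^d\setminus\rho\bB^d}}$. For the second term, the standard estimate $\|T_{d,\phi}\|\le\|\phi\|_\infty$ yields
\[
\|T_{d,\psi\chi_{\bB^d\setminus\rho\bB^d}}\|\le \sup_{\rho\le r<1}\varepsilon(r).
\]
Here we use spherical coordinates \eqref{eq:measures-polar-coordinates} to pass from the radial sup bound to the essential sup on the annulus. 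By the preceding display this bound tends to $0$ as $\rho\to1^-$.

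The first term has a compactly supported bounded symbol, so we show it is compact. Take $f_k\rightharpoonup0$ weakly in $\cA^2(\bB^d)$. Then $(f_k)$ is bounded, and by the reproducing property $f_k\to0$ pointwise. The reproducing kernel is bounded on $\overline{\rho\bB^d}$, so the $f_k$ are uniformly bounded there. Dominated convergence then gives $\|\psi\chi_{\rho\bB^d}f_k\|_{L^2}\to0$, and hence $T_{d,\psi\chi_{\rho\bB^d}}f_k\to0$ in norm; this is compactness. Alternatively, one can compute in the orthonormal basis $e^{(0)}_{d,\alpha}$: the restriction map $f\mapsto f|_{\rho\bB^d}$ into $L^2(\rho\bB^d)$ is Hilbert--Schmidt, since $\sum_\alpha\|e^{(0)}_{d,\alpha}\|^2_{L^2(\rho\bB^d)}<\infty$ by \eqref{eq:int-z-alpha-conj-z-beta-dvlambda}.

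Thus $T_{d,\psi}$ is a norm limit of compact operators, hence compact, because $\cK(\cA^2(\bB^d))$ is norm closed. The only delicate point is the measure-theoretic one: we must ensure that the uniform convergence of the radial restrictions controls the essential supremum of $\psi$ on annuli. This follows from Fubini applied in polar coordinates, using the fixed representatives, as stipulated after \eqref{eq:extension-to-boundary}.
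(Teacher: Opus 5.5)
Your proof is correct and follows the paper's route: truncate $\varphi-\varphi'$ to $\rho\bB^d$, show the truncated Toeplitz operator is compact, and bound the remainder in norm by the essential supremum of the symbol on the annulus. The paper proves the compactness step with the square-integrability of the kernel, $\int_{\rho\bB^d}K_d(w,w)\,dv_d(w)<\infty$, which is your Hilbert--Schmidt alternative, and your polar-coordinate justification that this essential supremum tends to zero supplies a step the paper leaves unstated.
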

\begin{proof}
    Set $h=\varphi-\varphi'$ and, for $r\in[0,1)$, let
    $h_r=h\chi_{r\bB^d}$. Since $h_r$ is bounded and supported in the compact set $r\overline{\bB^d}$, the Toeplitz operator $T_{d,h_r}$ is compact. Indeed, its integral kernel is square-integrable because
    $\int_{r\bB^d}K_d(w,w)\,dv_d(w)<\infty$, where $K_d$ is the Bergman kernel of $\cA^2(\bB^d)$.
    Moreover, one has
    \[
    \|T_{d,h}-T_{d,h_r}\|
    \leq \|h-h_r\|_\infty
    =\operatorname*{ess\,sup}_{|z|>r}|h(z)|
    \longrightarrow0.
    \]
    Thus $T_{d,h}$ is a norm limit of compact operators and is therefore compact.
\end{proof}

\begin{prop}
\label{prop:Calkin-continuous-boundary}
    Let $\varphi\in L^\infty(\bB^d)$ and suppose that it satisfies \eqref{eq:extension-to-boundary}. Let $p(z,\overline{z})$ be a polynomial. Then
    \begin{enumerate}
        \item
        One has
        \[
        p(T_{d,\varphi},T_{d,\varphi}^*)-T_{d,p(\varphi,\overline{\varphi})}\in \cK(\cA^2(\bB^d)).
        \]
        \item The unital $C^*$-algebra $C^*([T_{d,\varphi}]_\cK)$ is isometrically isomorphic to the $C^*$-algebra $C(\varphi|_{S^{2d-1}}(S^{2d-1}))$.
        In particular, the essential spectrum of $T_{d,\varphi}$ is
        \[
        \operatorname{ess-sp}\big(T_{d,\varphi}\big)
        =
        \varphi|_{S^{2d-1}}(S^{2d-1}).
        \]
        \item
        The unital Banach algebra $\cB\big([T_{d,\varphi}]_{\cK}\big)$
        generated by $[T_{d,\varphi}]_{\cK}$ inside
        $\cL(\cA^2(\bB^d))/\cK(\cA^2(\bB^d))$ is isomorphic to the unital Banach algebra $\cP(\varphi|_{S^{2d-1}}(S^{2d-1}))$ inside $C\big(\varphi|_{S^{2d-1}}(S^{2d-1})\big)$ generated by all holomorphic polynomials.

        In particular, the maximal ideal space of the algebra $\cB\big([T_{d,\varphi}]_{\cK}\big)$ is homeomorphic to the space
        \[
        M\big(\cB\big([T_{d,\varphi}]_{\cK}\big)\big)
        \cong
        \pconv{\operatorname{ess-sp}(T_{d,\varphi})}
        =
        \pconv{\varphi|_{S^{2d-1}}(S^{2d-1})}.
        \]
    \end{enumerate}
\end{prop}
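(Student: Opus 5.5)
The plan is to reduce everything to the classical fact that, on $\cA^2(\bB^d)$, Toeplitz operators whose symbols are continuous on the closed ball form, modulo compacts, a commutative $C^*$-algebra isomorphic to $C(S^{2d-1})$.

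\emph{Step 1: pass to a continuous symbol.} Set $h=\varphi|_{S^{2d-1}}$ and extend it to a continuous function $\tilde h$ on $\overline{\bB^d}$, for instance $\tilde h(r\xi)=\chi(r)h(\xi)$, where $\chi$ is a continuous cutoff that vanishes near $0$ and equals $1$ near $1$. Then $\tilde h$ has the same boundary value as $\varphi$. By Lemma~\ref{lem:Tvarphi-Tvarphi'-compact},
\[
T_{d,\varphi}-T_{d,\tilde h}\in\cK(\cA^2(\bB^d)).
\]
Likewise, $p(\varphi,\overline\varphi)$ satisfies \eqref{eq:extension-to-boundary} with boundary value $p(h,\overline h)$, because polynomials are uniformly continuous on bounded sets. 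Hence
\[
T_{d,p(\varphi,\overline\varphi)}-T_{d,p(\tilde h,\overline{\tilde h})}\in\cK.
\]

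\emph{Step 2: the key fact (item 1).} For $f,g\in C(\overline{\bB^d})$ the semicommutator $T_fT_g-T_{fg}$ is compact, and $T_f^*=T_{\overline f}$. I would quote this as standard; a proof sketch approximates $f$ uniformly by polynomials in $z,\overline z$ (Stone--Weierstrass) and reduces to $T_{\overline{z_i}}T_{z_j}-T_{\overline{z_i}z_j}$. That operator acts diagonally on monomials with entries that tend to $0$, so it is compact. Induction on the degree of $p$ then gives
\[
p(T_{\tilde h},T_{\tilde h}^*)-T_{p(\tilde h,\overline{\tilde h})}\in\cK,
\]
and Step 1 transfers this to $\varphi$, proving item 1. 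This is the main obstacle, although it is classical.

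\emph{Step 3: item 2.} Define a map on $*$-polynomials by $p([T_\varphi]_\cK,[T_\varphi]_\cK^*)\mapsto p(h,\overline h)|_{S^{2d-1}}$. By item 1 and the standard fact that $T_f$ is compact, for continuous $f$, exactly when $f|_{S^{2d-1}}=0$, the quotient norm is
\[
\|[T_f]_\cK\|=\|f|_{S^{2d-1}}\|_\infty .
\]
One way to see this: compose with normalized reproducing kernels $k_z$. Weak convergence $k_z\to0$ gives $\|[T_f]_\cK\|\ge\limsup_{|z|\to1}|\widetilde{T_f}(z)|$, and the Berezin transform converges to the boundary value. Conversely, $\|T_{f-f\chi_{r\bB^d}}\|\le\sup_{|z|>r}|f|$ gives the reverse inequality.

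This yields an isometric $*$-isomorphism $C^*([T_\varphi]_\cK)\cong C^*(h,\overline h)\subset C(S^{2d-1})$. The latter is $\{F\circ h: F\in C(h(S^{2d-1}))\}\cong C(h(S^{2d-1}))$ by Stone--Weierstrass applied on the compact set $h(S^{2d-1})$. The spectrum of the coordinate function is $h(S^{2d-1})$, and spectra in $C^*$-subalgebras agree with spectra in the Calkin algebra, which gives $\operatorname{ess-sp}(T_\varphi)=h(S^{2d-1})$.

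\emph{Step 4: item 3.} Restricting the isometric isomorphism of Step 3 to holomorphic polynomials in $[T_\varphi]_\cK$ gives an isometric isomorphism of $\cB([T_\varphi]_\cK)$ onto the closure of the polynomials in $C(h(S^{2d-1}))$, which is $\cP(h(S^{2d-1}))$. The description of the maximal ideal space then follows from the recalled fact $M(\cB(T))\cong\pconv{\operatorname{sp}(T)}$, applied inside the Calkin algebra with spectrum $\operatorname{ess-sp}(T_\varphi)$.
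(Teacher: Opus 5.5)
Your proposal is correct and follows essentially the paper's route: you pass to a continuous extension of $h=\varphi|_{S^{2d-1}}$ via Lemma~\ref{lem:Tvarphi-Tvarphi'-compact}, use the symbol calculus of $\cT(C(\overline{\bB^d}))$ modulo $\cK$, and identify $C^*(h)$ and the closure of holomorphic polynomials in $h$ with $C(h(S^{2d-1}))$ and $\cP(h(S^{2d-1}))$; the only difference is that the paper simply quotes the short exact sequence $\sigma(T_{d,f})=f|_{S^{2d-1}}$, whereas you unpack it into semicommutator compactness plus the Berezin-transform identity $\|[T_{d,f}]_\cK\|=\|f|_{S^{2d-1}}\|_\infty$. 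One small slip in your sketch of the semicommutator fact: $T_{\overline{z_i}}T_{z_j}-T_{\overline{z_i}z_j}$ is identically zero (since $z_jf$ is already holomorphic), and the operator you actually need is $T_{z_j}T_{\overline{z_i}}-T_{z_j\overline{z_i}}$, which is diagonal on monomials only for $i=j$ and otherwise sends $z^\alpha$ to a multiple of $z^{\alpha-e_i+e_j}$ with normalized coefficients of order $1/|\alpha|$, so it is still compact.
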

\begin{proof}
    Let $h=\varphi|_{S^{2d-1}}$ and choose an extension
    $\Phi\in C(\overline{\bB^d})$ with $\Phi|_{S^{2d-1}}=h$.
    Lemma~\ref{lem:Tvarphi-Tvarphi'-compact} gives
    $[T_{d,\varphi}]_\cK=[T_{d,\Phi}]_\cK$.

    We use the standard short exact sequence
    \[
    0\longrightarrow \cK(\cA^2(\bB^d))
    \longrightarrow \cT(C(\overline{\bB^d}))
    \overset{\sigma}{\longrightarrow} C(S^{2d-1})
    \longrightarrow0,
    \qquad
    \sigma(T_{d,f})=f|_{S^{2d-1}},
    \]
    where $\cT(C(\overline{\bB^d}))$ is the $C^*$-algebra generated by the Toeplitz operators with continuous symbols; see, for example,
    \cite{Venugopalkrishna1972,Coburn1973,Vasilevski2008,BauerVasilevski2013}.
    Hence the image of $[T_{d,\varphi}]_\cK$ under the symbol map is $h$.

    For a polynomial $p(z,\overline z)$, multiplicativity of $\sigma$ gives
    \[
    p([T_{d,\Phi}]_\cK,[T_{d,\Phi}]_\cK^*)
    =[T_{d,p(\Phi,\overline\Phi)}]_\cK.
    \]
    The symbols $p(\varphi,\overline\varphi)$ and
    $p(\Phi,\overline\Phi)$ have the same continuous boundary value
    $p(h,\overline h)$, so another application of
    Lemma~\ref{lem:Tvarphi-Tvarphi'-compact} proves the first assertion.

    The unital $C^*$-algebra generated by $h$ in $C(S^{2d-1})$ is naturally isometrically isomorphic to $C(h(S^{2d-1}))$. This proves the second assertion and the formula for the essential spectrum. Likewise, the norm closure of the holomorphic polynomials in $h$ is naturally isometrically isomorphic to
    $\cP(h(S^{2d-1}))$, which proves the third assertion and the stated description of its maximal ideal space.
\end{proof}
Consider now the action of $\bT$ on functions $f$ defined on $\bB^d$ by
\[
t\cdot f(z)=f(t^{-1}z),\qquad z\in \bB^d,\ t\in\bT.
\]
Restricting the action to $\cA^2(\bB^d)$ yields a unitary representation $\pi_{\bT}$. An operator $T\in \cL(\cA^2(\bB^d))$ is called $\bT$-invariant if, for every $t\in\bT$,
\[
\pi_{\bT}(t)T=T\pi_{\bT}(t).
\]
For every such operator, the same argument used in Section~\ref{sec:Tm-invariant-Toep-op} yields the direct-sum decomposition $T=\bigoplus_{\ell\in \bN_0}T|_{H_{d,\ell}}$.

Let
\[
\LinftydT=\{\varphi\in L^\infty(\bB^d)\colon
\varphi(t^{-1}z)=\varphi(z)\text{ for a.e. }z\in\bB^d
\text{ for every }t\in\bT\},
\]
and let $\contdT$ denote the space of all $c\in \LinftydT$ that admit a continuous boundary value on $\partial\bB^d=S^{2d-1}$ in the sense that some representative of $c$ satisfies \eqref{eq:extension-to-boundary}.
As in the preceding cases, a Toeplitz operator $T_c$ with symbol $c\in L^\infty(\bB^d)$ commutes with $\pi_{\bT}$ if and only if $c\in\LinftydT$.

Let $\ell\in\bN_0$ and let
\[
K_{d,\ell}(z,w)
=
\sum_{|\alpha|=\ell}e_{d,\alpha}^{(0)}(z)\overline{e_{d,\alpha}^{(0)}(w)},\qquad z,w\in \bB^d,
\]
denote the reproducing kernel of the space $H_{d,\ell}\subset \cA^2(\bB^d)$. Hence, by the multinomial theorem, one obtains
\begin{equation}
    \label{eq:Kdell-multinomial-formula}
    K_{d,\ell}(z,w)=\frac{(d+\ell)!}{d!\ell!}(z\cdot \overline{w})^{\ell},
\end{equation}
where $z\cdot \overline{w}\coloneqq z_1\overline{w_1}+\cdots+z_d\overline{w_d}$. In particular,
\begin{equation}
\label{eq:Kdell-squared-norm}
    \|K_{d,\ell}(\cdot,w)\|^2=K_{d,\ell}(w,w)=\frac{(d+\ell)!}{d!\ell!}|w|^{2\ell}.
\end{equation}
Let $k_{d,\ell}$ denote the normalized reproducing kernel
\[
k_{d,\ell}(z,w)=\frac{K_{d,\ell}(z,w)}{\|K_{d,\ell}(\cdot,w)\|},
\qquad z\in\bB^d,\quad w\in\bB^d\setminus\{0\}.
\]
\begin{prop}
\label{prop:Berezin-approximation}
Let $c\in \contdT$. Then, for every $w\in\bB^d\setminus\{0\}$,
\begin{equation*}
    \lim_{\ell\to\infty}
    \bigl\|\bigl(T_{d,c}-c(w/|w|)I\bigr)k_{d,\ell}(\cdot,w)\bigr\|=0.
\end{equation*}
\end{prop}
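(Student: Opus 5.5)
Since $c$ is $\bT$-invariant, $T_{d,c}$ commutes with $\pi_{\bT}$ and therefore preserves $H_{d,\ell}$. However, we do not even need this. Put $\zeta=w/|w|$ and $h=c|_{S^{2d-1}}$; the statement's $c(w/|w|)$ is read as $h(\zeta)$. Since $k_{d,\ell}(\cdot,w)\in\cA^2(\bB^d)$ and the Bergman projection is a contraction,
\[
\bigl\|\bigl(T_{d,c}-h(\zeta)I\bigr)k_{d,\ell}(\cdot,w)\bigr\|^2
\le\int_{\bB^d}|c(z)-h(\zeta)|^2\,|k_{d,\ell}(z,w)|^2\,dv_d(z).
\]
By \eqref{eq:Kdell-multinomial-formula} and \eqref{eq:Kdell-squared-norm}, $|k_{d,\ell}(z,w)|^2=\frac{(d+\ell)!}{d!\ell!}|z\cdot\overline{\zeta}|^{2\ell}$, which is independent of $|w|$. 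Thus $d\rho_\ell(z):=|k_{d,\ell}(z,w)|^2dv_d(z)$ is a probability measure on $\bB^d$. The plan is to show that $\rho_\ell$ concentrates, as $\ell\to\infty$, near the circle $\bT\zeta\subset S^{2d-1}$, and that $c$ is uniformly close to $h(\zeta)$ near that circle.

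\textbf{Continuity step.} By $\bT$-invariance of $c$, the boundary value $h$ is $\bT$-invariant, being a uniform limit of the $\bT$-invariant functions $c(r\cdot)$. Hence $h\equiv h(\zeta)$ on $\bT\zeta$. Fix $\varepsilon>0$. Uniform continuity of $h$ on the compact sphere gives $\eta>0$ such that $|h(\xi)-h(\zeta)|<\varepsilon$ whenever $|\xi\cdot\overline\zeta|>1-\eta$. Indeed, $|\xi\cdot\overline\zeta|$ close to $1$ means $\xi$ is close to $t\zeta$ for $t=\overline{\xi\cdot\overline\zeta}/|\xi\cdot\overline\zeta|$. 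By \eqref{eq:extension-to-boundary}, choose $r<1$ with $|c(z)-h(z/|z|)|<\varepsilon$ for a.e.\ $|z|>r$.

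\textbf{Splitting.} Split $\bB^d$ into three pieces:
\[
A=\{|z|\le r\},\quad
B=\{|z|>r,\ |\xi\cdot\overline\zeta|\le 1-\eta\},\quad
C=\{|z|>r,\ |\xi\cdot\overline\zeta|>1-\eta\},
\]
where $\xi=z/|z|$. On $C$ the integrand is at most $4\varepsilon^2$, and $\rho_\ell(C)\le1$. On $A\cup B$ the integrand is bounded by $4\|c\|_\infty^2$, so it suffices to show $\rho_\ell(A)+\rho_\ell(B)\to0$.

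\textbf{Mass estimates.} Use \eqref{eq:measures-polar-coordinates} to write $|z\cdot\overline\zeta|^{2\ell}=s^{2\ell}|\xi\cdot\overline\zeta|^{2\ell}$ with $z=s\xi$. The weighted integral then factors into a radial part and a spherical part. Since $\rho_\ell$ has total mass one, \eqref{eq:int_S-zalpha-zbeta} (or rotation to $\zeta=e_1$) gives the normalization. Consequently $\rho_\ell(A)$ equals the fraction $\int_0^r s^{2\ell+2d-1}ds\big/\int_0^1 s^{2\ell+2d-1}ds=r^{2\ell+2d}$, which tends to $0$. For $B$, rotate so that $\zeta=e_1$. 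The spherical integral $\int_{S^{2d-1}}|\xi_1|^{2\ell}d\sigma_d=\frac{2\pi^d\ell!}{(d-1+\ell)!}$ decays only polynomially, like $\ell^{-(d-1)}$. The portion where $|\xi_1|\le1-\eta$ is at most $\sigma_d(S^{2d-1})(1-\eta)^{2\ell}$, which decays geometrically. Hence the ratio is $O\bigl(\ell^{d-1}(1-\eta)^{2\ell}\bigr)\to0$. Combining the three pieces gives $\limsup_\ell\|(T_{d,c}-h(\zeta))k_{d,\ell}\|^2\le4\varepsilon^2$, and letting $\varepsilon\to0$ proves the claim.

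The main obstacle is the angular concentration estimate on $B$. It requires comparing the exact normalization of $|\xi\cdot\overline\zeta|^{2\ell}$ on the sphere, which decays polynomially, with the geometric decay off a neighbourhood of $\bT\zeta$. Making the identification between "$|\xi\cdot\overline\zeta|$ close to $1$" and "$\xi$ close to the circle $\bT\zeta$" precise is also needed there, and that step is what uses the $\bT$-invariance of $h$.
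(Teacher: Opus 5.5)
Your proof is correct. Its core matches the paper's: both bound the norm by $\int|c-h(\zeta)|^2|k_{d,\ell}(\cdot,w)|^2\,dv_d$ and split according to whether $z/|z|$ lies near the circle $\bT\zeta$. Both then let the geometric factor $(1-\eta)^{2\ell}$ beat the polynomial growth of $\frac{(d+\ell-1)!}{(d-1)!\ell!}$. Your spherical ratio bound coincides exactly with the paper's final bound when $\eta=\delta^2/2$.

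The two proofs differ only in how the interior of the ball is handled. The paper first replaces $c$ by the radial extension $c_{\mathrm{rad}}$ of its boundary value. It justifies this with Lemma~\ref{lem:Tvarphi-Tvarphi'-compact}, since $T_{d,c}-T_{d,c_{\mathrm{rad}}}$ is compact. It also uses that the unit vectors $k_{d,\ell}(\cdot,w)\in H_{d,\ell}$ are mutually orthogonal, hence weakly null, so compact perturbations do not change the limit. After this reduction the symbol depends on the angle alone, and a two-piece split over the whole ball suffices. In that route, hypothesis \eqref{eq:extension-to-boundary} enters only through the lemma.

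You instead keep $c$, use \eqref{eq:extension-to-boundary} directly on a shell $\{|z|>r\}$, and dispose of $\{|z|\le r\}$ by the exact radial mass $\rho_\ell(\{|z|\le r\})=r^{2\ell+2d}$. Your route is more self-contained, since it needs no compact operators and no weak convergence. It also shows explicitly that the measures $\rho_\ell$ concentrate both radially at the boundary and angularly near $\bT\zeta$. The paper's route reuses machinery it needs anyway for the Calkin-algebra results, and it isolates the fact that only the boundary value of $c$ matters.

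One small slip: the phase that gives $|\xi-t\zeta|^2=2-2|\xi\cdot\overline\zeta|$ is $t=\xi\cdot\overline\zeta/|\xi\cdot\overline\zeta|$, not its conjugate. With this choice, $|\xi\cdot\overline\zeta|>1-\eta$ gives $|\xi-t\zeta|<\sqrt{2\eta}$, which is what your continuity step needs.
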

\begin{proof}
    We adapt the proof from \cite[Proposition 3.2.5]{RodriguezRodriguezPHD2024} for completeness.
    Since $c\in\contdT$, the function $c|_{S^{2d-1}}$ is uniformly continuous.
    Let $c_{\text{rad}}\in L^\infty(\bB^d)$ be the radial extension of $c|_{S^{2d-1}}$.
    By Lemma~\ref{lem:Tvarphi-Tvarphi'-compact}, the operator $T_{d,c}-T_{d,c_{\text{rad}}}$ is compact. Since $(k_{d,\ell}(\cdot,w))_\ell$ converges weakly to zero, the limit remains the same under compact perturbations of $T_{d,c}$. Thus, we may assume that $c=c_{\text{rad}}$.

    Denote $\xi_w=w/|w|\in S^{2d-1}$.
    Let $\varepsilon>0$ and choose $0<\delta<1$ such that $|\xi-\xi_w|<\delta$ implies $\bigl|c|_{S^{2d-1}}(\xi)-c|_{S^{2d-1}}(\xi_w)\bigr|<\varepsilon$. Define
    \[
    O_\delta=
    \left\{
    z\in\bB^d\setminus\{0\}\colon \left|\frac{z}{|z|}-t\xi_w\right|<\delta,\text{ for some }t\in\bT
    \right\}.
    \]
    \begin{enumerate}
        \item If $z\in O_\delta$, then, by the invariance of $c$, for some $t\in\bT$ we have
    \[
    |c(z)-c(w)|=\bigl|c|_{S^{2d-1}}(z/|z|)-c|_{S^{2d-1}}(t\xi_w)\bigr|<\varepsilon.
    \]

    \item If $z\notin O_\delta$, then for every $t\in\bT$ one has
    $\left|\frac{z}{|z|}-t\xi_w\right|\geq\delta$.
    Squaring both sides and simplifying, we obtain, by
    taking $t=z\cdot\overline{w}/|z\cdot\overline{w}|$ when
    $z\cdot\overline{w}\neq0$,
    \[
    |z\cdot\overline{w}|\leq |z||w|\left(1-\frac{\delta^2}{2}\right).
    \]
    \end{enumerate}
    The estimate is immediate if $z\cdot\overline{w}=0$.
    Therefore, using \eqref{eq:Kdell-multinomial-formula}, \eqref{eq:Kdell-squared-norm} and \eqref{eq:int-z-alpha-conj-z-beta-dvlambda}, we have
    \begin{align*}
        \|&(T_{d,c}-c(w/|w|)I)k_{d,\ell}(\cdot,w)\|^2
        \leq
        \int_{\bB^d}|c(z)-c(w)|^2
        |k_{d,\ell}(z,w)|^2\,dv_d(z)\\
        &\leq
        \varepsilon^2+4\|c\|_\infty^2
        \frac{d!\ell!}{(d+\ell)!|w|^{2\ell}}
        \int_{\bB^d\setminus O_\delta}
        \left|
        \frac{(d+\ell)!}{\ell!d!}(z\cdot\overline{w})^\ell
        \right|^{2}dv_d(z)\\
        &\leq
        \varepsilon^2+4\|c\|_\infty^2
        \frac{(d+\ell)!}{d!\ell!}
        (1-\delta^2/2)^{2\ell}
        \int_{\bB^d\setminus O_\delta}|z|^{2\ell}dv_d(z).
    \end{align*}
    Using \eqref{eq:measures-polar-coordinates}, one can check that $\int_{\bB^d}|z|^{2\ell}dv_{d}(z)=\frac{d}{d+\ell}$. Thus
    \[
    \|(T_{d,c}-c(w/|w|)I)k_{d,\ell}(\cdot,w)\|^2
    \leq
    \varepsilon^2+4\|c\|_\infty^2 (1-\delta^2/2)^{2\ell} \frac{(d+\ell-1)!}{(d-1)!\ell!},
    \]
    where $(1-\delta^2/2)^{2\ell} \frac{(d+\ell-1)!}{(d-1)!\ell!}\to0$ as $\ell\to\infty$.
\end{proof}

\begin{prop}
\label{prop:norm[T]K=lim-norms-T|H}
    Let $c\in \contdT$. Then
    \[
    \big\|[T_{d,c}]_{\cK}\big\|
    =
    \lim_{\ell\to\infty}
    \big\|T_{d,c}|_{H_{d,\ell}}\big\|.
    \]
    Moreover, for every polynomial $p(z,\overline{z})$,
    \[
    \|p([T_{d,c}]_\cK,[T_{d,c}]^*_\cK)\|=\lim_{\ell\to\infty}\|p(T_{d,c},T_{d,c}^*)|_{H_{d,\ell}}\|.
    \]
\end{prop}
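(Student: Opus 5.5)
We prove both inequalities, using the block decomposition $T_{d,c}=\bigoplus_{\ell}T_{d,c}|_{H_{d,\ell}}$ (valid since $c\in\LinftydT$). It suffices to treat the second statement, since the first is the case $p(z,\overline z)=z$. Fix a polynomial $p$ and set $A=p(T_{d,c},T_{d,c}^*)$. Because $T_{d,c}$ commutes with $\pi_{\bT}$, so does $T_{d,c}^*$, and hence so does $A$. Thus $A=\bigoplus_\ell A_\ell$ with $A_\ell=A|_{H_{d,\ell}}=p(T_{d,c}|_{H_{d,\ell}},(T_{d,c}|_{H_{d,\ell}})^*)$. We show that $\lim_\ell\|A_\ell\|$ exists and equals $\|[A]_\cK\|=\|p([T_{d,c}]_\cK,[T_{d,c}]_\cK^*)\|$.

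\emph{Upper bound.} Each $H_{d,\ell}$ is finite dimensional, so $A-\bigoplus_{\ell\geq L}A_\ell$ has finite rank. Therefore
\[
\|[A]_\cK\|\leq \sup_{\ell\geq L}\|A_\ell\|
\]
for every $L$, which gives $\|[A]_\cK\|\leq\liminf$ after passing to $\limsup$. More carefully, this yields $\|[A]_\cK\|\leq\limsup_\ell\|A_\ell\|$. Conversely, every compact $K$ satisfies $\|P_{\geq L}KP_{\geq L}\|\to0$, where $P_{\geq L}$ is the projection onto $\bigoplus_{\ell\geq L}H_{d,\ell}$. Hence
\[
\|A+K\|\geq \sup_{\ell\geq L}\|A_\ell\|-o(1),
\]
and so $\|[A]_\cK\|=\limsup_\ell\|A_\ell\|$. (This is the standard fact that for block diagonal operators with finite blocks the essential norm is the $\limsup$ of the block norms.)

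\emph{Lower bound on $\liminf$.} This is the main step. By Proposition~\ref{prop:Calkin-continuous-boundary}, $[A]_\cK=[T_{d,p(c,\overline c)}]_\cK$, and its norm equals $\sup_{\xi\in S^{2d-1}}|p(h(\xi),\overline{h(\xi)})|$ with $h=c|_{S^{2d-1}}$. Fix $\xi$ and take $w=\xi/2$. Proposition~\ref{prop:Berezin-approximation} gives $\|(T_{d,c}-h(\xi)I)k_{d,\ell}(\cdot,w)\|\to0$. Since $k_{d,\ell}(\cdot,w)\in H_{d,\ell}$ is a unit vector, this is an approximate eigenvector for the block $T_{d,c}|_{H_{d,\ell}}$.

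For adjoints we apply the same proposition to $\overline c\in\contdT$, using $T_{d,\overline c}=T_{d,c}^*$. This yields $\|(T_{d,c}^*-\overline{h(\xi)}I)k_{d,\ell}\|\to0$. Because all operators involved are bounded, approximate eigenvectors propagate through products of $T_{d,c}$ and $T_{d,c}^*$ by a telescoping argument. Consequently
\[
\|(A-p(h(\xi),\overline{h(\xi)}))k_{d,\ell}(\cdot,w)\|\to0,
\]
and so $\liminf_\ell\|A_\ell\|\geq|p(h(\xi),\overline{h(\xi)})|$. Taking the supremum over $\xi$ gives $\liminf_\ell\|A_\ell\|\geq\|[A]_\cK\|$.

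Combining this with the previous step, the limit exists and equals $\|[A]_\cK\|$. The delicate point is the telescoping step for mixed words in $T_{d,c}$ and $T_{d,c}^*$; it is handled by the adjoint version of Proposition~\ref{prop:Berezin-approximation}.
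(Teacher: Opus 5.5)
Your proof is correct, and its skeleton matches the paper's. The $\liminf$ bound comes from the unit vectors $k_{d,\ell}(\cdot,w)\in H_{d,\ell}$ of Proposition~\ref{prop:Berezin-approximation}, placed at a boundary point where the modulus of the boundary symbol is maximal. The $\limsup$ bound comes from the fact that a compact operator has vanishing norm on the far blocks.

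Two small remarks on your first step. Your tail-projection argument also proves $\|[A]_\cK\|\le\limsup_\ell\|A_\ell\|$ by finite-rank truncation; this is true but not needed. The sentence there claiming a bound by the $\liminf$ is a slip, superseded by your corrected $\limsup$ statement.

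The real difference is how you handle the polynomial.
\begin{itemize}
\item The paper first proves the case $p(z,\overline z)=z$. It then uses Proposition~\ref{prop:Calkin-continuous-boundary}(1) to see that $p(T_{d,c},T_{d,c}^*)-T_{d,p(c,\overline c)}$ is compact, so its restrictions to $H_{d,\ell}$ tend to zero in norm. Finally it applies the scalar case to the new symbol $p(c,\overline c)\in\contdT$.
\item You treat all $p$ at once. Using $T_{d,\overline c}=T_{d,c}^*$ and Proposition~\ref{prop:Berezin-approximation} for $\overline c$, the same vectors $k_{d,\ell}(\cdot,w)$ are approximate eigenvectors of $T_{d,c}$ and $T_{d,c}^*$ simultaneously. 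Telescoping, with the scalars moved to the right, propagates this to every word in $T_{d,c},T_{d,c}^*$, hence to $A$, whatever ordering convention $p(T_{d,c},T_{d,c}^*)$ uses.
\end{itemize}

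The paper's reduction is shorter and reuses the scalar statement. Yours gives the slightly more structural fact that a single family of vectors is approximately joint-eigen for the whole $*$-algebra generated by $T_{d,c}$. You still invoke Proposition~\ref{prop:Calkin-continuous-boundary} to evaluate $\|[A]_\cK\|=\sup_{\xi}|p(h(\xi),\overline{h(\xi)})|$, so that ingredient is not avoided.
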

\begin{proof}
Proposition~\ref{prop:Calkin-continuous-boundary} yields
\[
\|[T_{d,c}]_{\cK}\|=\|c|_{S^{2d-1}}\|_{\infty}=\bigl|c|_{S^{2d-1}}(\xi)\bigr|,
\]
for some $\xi\in S^{2d-1}$. Let $w\in \bB^d\setminus\{0\}$ with $w/|w|=\xi$.
Proposition~\ref{prop:Berezin-approximation} gives
\[
\|[T_{d,c}]_{\cK}\|=
\lim_{\ell\to\infty}\|T_{d,c}k_{d,\ell}(\cdot,w)\|
\leq
\liminf_{\ell\to\infty}\|T_{d,c}|_{H_{d,\ell}}\|.
\]
On the other hand, let $\varepsilon>0$. Then there exists $K\in \cK(\cA^2(\bB^d))$ such that
\begin{align*}
\|[T_{d,c}]_{\cK}\|&
\geq
\|T_{d,c}+K\|-\varepsilon/2\\
&\geq
\|(T_{d,c}+K)|_{H_{d,\ell}}\|-\varepsilon/2\\
&\geq
\bigg|
\|T_{d,c}|_{H_{d,\ell}}\|-\|K|_{H_{d,\ell}}\|
\bigg|-\varepsilon/2,
\end{align*}
for all $\ell\in\bN_0$.

Since $K$ is compact, one has $\lim_{\ell\to\infty}\|K|_{H_{d,\ell}}\|=0$. Otherwise, one could build a sequence $(f_{\ell_{p}})_p$ with $f_{\ell_{p}}\in H_{d,\ell_{p}}$, $\|f_{\ell_p}\|=1$ and $\|Kf_{\ell_{p}}\|>\delta$ for some $\delta>0$ and all $p$. This is impossible because $(f_{\ell_{p}})$ converges weakly to zero.

Hence, there exists $L\in\bN_0$ such that $\ell\geq L$ implies $\|K|_{H_{d,\ell}}\|<\varepsilon/2$. Thus
\[
\|[T_{d,c}]_{\cK}\|\geq
\|T_{d,c}|_{H_{d,\ell}}\|-\varepsilon
\]
and
\[
\liminf_{\ell\to\infty}
\|T_{d,c}|_{H_{d,\ell}}\|
\geq \|[T_{d,c}]_{\cK}\|
\geq
\limsup_{\ell\to\infty}
\|T_{d,c}|_{H_{d,\ell}}\|-\varepsilon,
\]
and the first limit follows.

For the second assertion, note that
\[
p(T_{d,c},T_{d,c}^*)-T_{d,p(c,\overline{c})}\in \cK(\cA^2(\bB^d)).
\]
Hence
\[
\lim_{\ell\to\infty}
\|(p(T_{d,c},T_{d,c}^*)-T_{d,p(c,\overline{c})})|_{H_{d,\ell}}\|=0
\]
and, by the first limit,
\begin{align*}
    \lim_{\ell\to\infty}\|p(T_{d,c},&T_{d,c}^*)|_{H_{d,\ell}}\|=
\lim_{\ell\to\infty}
\|T_{d,p(c,\overline{c})}|_{H_{d,\ell}}\|\\
&=
\|[T_{d,p(c,\overline{c})}]_\cK\|=
\|p([T_{d,c}]_\cK,[T_{d,c}]_\cK^*)\|.\qedhere
\end{align*}

\end{proof}

\subsection{Local quotient algebras for continuous families of operators}

Let $j\in\{1,\ldots,m\}$ and let $c_j\in\contTm\cap \LinftyUkjT$. Moreover, fix $\mu\in M(\Tkqr)$. By Corollary~\ref{cor:Tmu-to-Tkappaalpha} and Proposition~\ref{prop:hat-cka-UkjT}, when $\ka$ is close to $\mu$ in $M(\Tkqr)$,
\[
T_{n,c_j}|_{H_{n,\ka}}
\approx
I|_{H_{k_1,\ka_1}}\otimes\cdots\otimes T_{k_j,\widehat{c}_{j,\mu}}|_{H_{k_j,\ka_j}}\otimes\cdots\otimes I|_{H_{k_m,\ka_m}}.
\]
Thus, as $\ka\to\mu$, the algebra generated by
$T_{n,c_j}|_{H_{n,\ka}}$ is naturally modeled by the algebra generated by
$T_{k_j,\widehat{c}_{j,\mu}}|_{H_{k_j,\ka_j}}$.
The next two theorems make this heuristic precise.

The result takes two forms, depending on whether the coordinate $\omega_j$ of the stratum containing $\mu$ is finite or infinite. We state the two cases separately.

We formalize this idea of considering limit algebras by constructing suitable quotient spaces.
We let
\[
\Tkqrcj=\cB(\Tkqr,T_{n,c_j})
\]
denote the Banach algebra generated by $\Tkqr$ and the single Toeplitz operator $T_{n,c_j}$.
Let $\Dkqrcj$ denote the dense, nonclosed subalgebra of $\Tkqrcj$ consisting of finite linear combinations of finite products of the generators of $\Tkqrcj$:
\[
\Dkqrcj=
\operatorname{span}
\big\{D_\gamma T_{n,c_j}^{p}\colon D_\gamma\in\Tkqr,\ p\in\bN_0
\big\}.
\]
Given $\mu\in M(\Tkqr)$, let $\psi_\mu$ denote the corresponding multiplicative functional, and let $\ker_\mu=\ker(\psi_\mu)$ be the associated maximal ideal of $\Tkqr$.
Let $\Jmuj$ denote the closed ideal generated inside $\Tkqrcj$ by $\ker_\mu$. Explicitly,
\[
\Jmuj=\overline{\operatorname{span}}
\bigg\{
D_\gamma T_{n,c_j}^p\colon p\in\bN_0,\ \gamma(\mu)=0
\bigg\}.
\]
We denote the equivalence classes in $\Tkqrcj/\Jmuj$ by $[T]_\cJ$.

\begin{lem}
\label{lem:restricted-T-in-Jmu-to-zero}
    Let $(\ka^\alpha)_\alpha$ be a net in $\bN_0^m$ converging to $\mu$ in $M(\Tkqr)$. Then, for every $T\in\Jmuj$,
    \[
    \lim_{\alpha}\|T|_{H_{n,\ka^\alpha}}\|=0.
    \]
\end{lem}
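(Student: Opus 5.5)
The plan is to verify the claim on the spanning elements of $\Jmuj$ and then pass to the closure. The key structural fact is that every operator in $\Tkqrcj$ commutes with $\pi_{\bT^m}$. Indeed, $T_{n,c_j}$ has a $\bT^m$-invariant symbol, and the elements of $\Tkqr$ are diagonal. So by Proposition~\ref{prop:T-Uk-Tm-as-direct-sum}, every $T\in\Tkqrcj$ leaves each $H_{n,\ka}$ invariant and $T=\bigoplus_\ka T|_{H_{n,\ka}}$. In particular the map $T\mapsto T|_{H_{n,\ka}}$ is a contractive algebra homomorphism, with $\|T|_{H_{n,\ka}}\|\le\|T\|$.

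First consider a generator $D_\gamma T_{n,c_j}^p$ with $\gamma(\mu)=0$. Since $D_\gamma$ acts on $H_{n,\ka}$ as the scalar $\gamma(\ka)$, we get
\[
\big\|(D_\gamma T_{n,c_j}^p)|_{H_{n,\ka^\alpha}}\big\|=|\gamma(\ka^\alpha)|\,\big\|T_{n,c_j}^p|_{H_{n,\ka^\alpha}}\big\|\le |\gamma(\ka^\alpha)|\,\|c_j\|_\infty^p .
\]
Convergence $\ka^\alpha\to\mu$ in $M(\Tkqr)$ means $\psi_{\ka^\alpha}(D_\gamma)\to\psi_\mu(D_\gamma)$, that is, $\gamma(\ka^\alpha)\to\gamma(\mu)=0$. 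Hence the restricted norms tend to $0$. The statement then extends to finite linear combinations of such generators by the triangle inequality.

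For general $T\in\Jmuj$, fix $\varepsilon>0$ and choose $S$ in the span of the generators with $\|T-S\|<\varepsilon$. Then
\[
\|T|_{H_{n,\ka^\alpha}}\|\le \|(T-S)|_{H_{n,\ka^\alpha}}\|+\|S|_{H_{n,\ka^\alpha}}\|<\varepsilon+\|S|_{H_{n,\ka^\alpha}}\|,
\]
so $\limsup_\alpha\|T|_{H_{n,\ka^\alpha}}\|\le\varepsilon$, and letting $\varepsilon\to0$ finishes the proof.

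I do not expect a serious obstacle. The only point needing care is to justify invariance of the blocks $H_{n,\ka}$ for elements of the closure, so that the restriction estimate applies to $T-S$. This follows because the set of operators commuting with $\pi_{\bT^m}$ is norm closed, or directly because any bounded operator satisfies $\|(T-S)|_{H}\|\le\|T-S\|$ on an invariant subspace $H$. No continuity assumption on $c_j$ is needed here.
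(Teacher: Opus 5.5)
Your proof is correct and is essentially the paper's argument. You approximate $T\in\Jmuj$ within $\varepsilon$ by a finite sum $\sum_p D_{\gamma_p}T_{n,c_j}^p$ with $\gamma_p(\mu)=0$, note that on $H_{n,\ka^\alpha}$ this sum acts as $\sum_p\gamma_p(\ka^\alpha)T_{n,c_j}^p|_{H_{n,\ka^\alpha}}$ with $\gamma_p(\ka^\alpha)\to 0$, and conclude by the triangle inequality; your remarks on block invariance and on the bound $\|T_{n,c_j}^p\|\le\|c_j\|_\infty^p$ only spell out what the paper leaves implicit.
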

\begin{proof}
    For every $\varepsilon>0$, there exists an operator
    \[
    T_\varepsilon=\sum_{p=0}^P D_{\gamma_p}T_{n,c_j}^p,
    \]
    where $\gamma_p(\mu)=\lim_{\alpha}\gamma_p(\ka^\alpha)=0$ for
    $p=0,\ldots,P$, such that
    \[
    \|T-T_\varepsilon\|<\varepsilon/2.
    \]
    Note that
    \[
    T_\varepsilon|_{H_{n,\ka^\alpha}}
    =
    \sum_{p=0}^P \gamma_p(\ka^\alpha)T^p_{n,c_j}|_{H_{n,\ka^\alpha}}.
    \]
    Thus, there exists $\alpha_0$ such that $\alpha\succeq \alpha_0$ implies $\|T_\varepsilon|_{H_{n,\ka^\alpha}}\|<\varepsilon/2$ and therefore
    \[
    \|T|_{H_{n,\ka^\alpha}}\|
    \leq
    \|(T-T_\varepsilon)|_{H_{n,\ka^\alpha}}\|+\|T_\varepsilon|_{H_{n,\ka^\alpha}}\|<\varepsilon.\qedhere
    \]
\end{proof}

\begin{prop}
\label{prop:norm-TJ-is-limit-along-net}
    Let $T\in \Tkqrcj$. Then there exists a net $(\ka^\alpha)_\alpha$ depending on $T$ and converging to $\mu\in M(\Tkqr)$ such that
    \[
    \|[T]_{\cJ}\|= \lim_{\alpha}\|T|_{H_{n,\ka^\alpha}}\|.
    \]
\end{prop}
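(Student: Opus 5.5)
The plan is to prove two inequalities: an upper bound $\limsup_\alpha\|T|_{H_{n,\ka^\alpha}}\|\le\|[T]_\cJ\|$ that holds for \emph{every} net $\ka^\alpha\to\mu$, and a construction of one particular net along which $\liminf_\alpha\|T|_{H_{n,\ka^\alpha}}\|\ge\|[T]_\cJ\|$. Together they give the claimed limit.

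Throughout I will use that every element of $\Tkqrcj$ leaves each $H_{n,\ka}$ invariant, so $T=\bigoplus_\ka T|_{H_{n,\ka}}$. This holds because the generators do (Proposition~\ref{prop:T-Uk-Tm-as-direct-sum}), the property passes to products, sums and norm limits, and it gives
\[
\|T\|=\sup_\ka\|T|_{H_{n,\ka}}\|.
\]

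\emph{Upper bound along any net.} Fix $J\in\Jmuj$. Then
\[
\|T+J\|\ge\|(T+J)|_{H_{n,\ka^\alpha}}\|\ge\|T|_{H_{n,\ka^\alpha}}\|-\|J|_{H_{n,\ka^\alpha}}\|.
\]
By Lemma~\ref{lem:restricted-T-in-Jmu-to-zero}, the last term tends to $0$, so $\limsup_\alpha\|T|_{H_{n,\ka^\alpha}}\|\le\|T+J\|$. Taking the infimum over $J$ gives the upper bound.

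\emph{Construction of the net.} I use the identification $\Tkqr\cong C(M(\Tkqr))$ (Gelfand--Naimark), with $\mathbb N_0^m$ dense in $M(\Tkqr)$. Fix an open neighbourhood $V$ of $\mu$. By Urysohn's lemma, choose $D_\gamma\in\Tkqr$ with $0\le\gamma\le1$, $\gamma(\mu)=1$ and $\operatorname{supp}\gamma\subset V$. Then $1-\gamma$ vanishes at $\mu$, so $D_{1-\gamma}\in\ker_\mu$. Since $\Jmuj$ is the closed ideal of $\Tkqrcj$ generated by $\ker_\mu$, we get $T-D_\gamma T=D_{1-\gamma}T\in\Jmuj$. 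Hence
\[
\|[T]_\cJ\|\le\|D_\gamma T\|=\sup_{\ka\in\bN_0^m}\gamma(\ka)\,\|T|_{H_{n,\ka}}\|\le\sup_{\ka\in V\cap\bN_0^m}\|T|_{H_{n,\ka}}\|.
\]
This set is nonempty by density. So for each pair $(V,\varepsilon)$ there is $\ka_{V,\varepsilon}\in V\cap\bN_0^m$ with $\|T|_{H_{n,\ka_{V,\varepsilon}}}\|\ge\|[T]_\cJ\|-\varepsilon$.

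\emph{Directing the net.} Direct the pairs by reverse inclusion in $V$ and decreasing $\varepsilon$. The net $(\ka_{V,\varepsilon})$ then converges to $\mu$ and satisfies
\[
\liminf\|T|_{H_{n,\ka_{V,\varepsilon}}}\|\ge\|[T]_\cJ\|.
\]
Combining this with the first step gives the equality, with the limit existing.

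The only delicate point is justifying $D_{1-\gamma}T\in\Jmuj$. The explicit spanning set describes $\Jmuj$ via monomials $D_\gamma T_{n,c_j}^p$, so I would approximate $T$ by elements of $\Dkqrcj$ and use closedness of $\Jmuj$. Everything else is routine.
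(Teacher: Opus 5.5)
Your proof is correct and follows essentially the same route as the paper: a Urysohn cutoff $D_\gamma$ with $[T]_\cJ=[D_\gamma T]_\cJ$ produces a net along which $\liminf\|T|_{H_{n,\ka^\alpha}}\|\geq\|[T]_\cJ\|$, and Lemma~\ref{lem:restricted-T-in-Jmu-to-zero} gives the reverse inequality. Indexing the net by pairs $(V,\varepsilon)$ and choosing $\ka_{V,\varepsilon}$ directly inside $V$ streamlines the argument: it removes the paper's separate case $\mu\in\bN_0^m$, the auxiliary points $\eta^U$ with tolerance $1/(|\eta^U|+1)$, and the need to assume $\|[T]_\cJ\|\neq 0$ in order to show that $\ka^U\in U$ eventually.
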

\begin{proof}
Let $T\in\Tkqrcj$. We may suppose that $\|[T]_\cJ\|\neq0$.

If $\mu=\ka\in\bN_0^m$, then $[T]_\cJ=[P_{\ka}T]_{\cJ}$, where $P_{\ka}$ is the orthogonal projection from $\cA^2_\lambda(\bB^n)$ onto $H_{n,\ka}$. Thus $\|[T]_\cJ\|\leq \|T|_{H_{n,\ka}}\|$. Furthermore, if $S\in \Jmuj$, then $P_\ka S|_{H_{n,\ka}}=0$. Hence,
\[
\|T|_{H_{n,\ka}}\|\leq
\|(T+S)|_{H_{n,\ka}}\|+\|S|_{H_{n,\ka}}\|
\leq
\|T+S\|.
\]
Taking the infimum over $S\in\Jmuj$, we get $\|[T]_\cJ\|\leq \|T|_{H_{n,\ka}}\|\leq \|[T]_\cJ\|$.

Suppose now that $\mu\in M(\Tkqr)\setminus\bN_0^m$.
    Let $V_\mu$ denote the set of open neighborhoods of $\mu$ in $M(\Tkqr)$.

    By the density of $\bN_0^m$ in $M(\Tkqr)$, for every $U\in V_\mu$ there exists $\eta^U\in U\cap \bN_0^m$. This yields a family $(\eta^U)_{U\in V_\mu}$ of elements in $\bN_0^m$, where $\eta^U\in U$ for all open $U\in V_\mu$. The set $V_\mu$ is a directed set under the usual relation
    \[
    U_1 \preceq U_2\quad\iff\quad U_1\supset U_2.
    \]
    Hence, by construction, $\lim_{U\in V_\mu}\eta^U=\mu$. Suppose that $\mu\in M(\omega)$. Since $\mu\notin \bN_0^m$, at least one coordinate, say $\omega_i$, equals $\infty$. The restriction map $M(\Tkqr)\to\Omega$ is continuous, so $\eta_i^U\to\infty$ in $\bN_0\cup\{\infty\}$. Consequently, $|\eta^U|\to\infty$ and
    $\lim_{U\in V_\mu}\frac{1}{|\eta^U|+1}=0$.

    For every $U\in V_\mu$, Urysohn's lemma provides a function
    $f_U\in C(M(\Tkqr))$ such that $0\leq f_U\leq1$,
    $f_U|_{M(\Tkqr)\setminus U}\equiv0$, and $f_U(\mu)=1$.

    Under the identification $\Tkqr\cong C(M(\Tkqr))$, the function
    $f_U$ corresponds to a diagonal operator $D_{f_U}\in\Tkqr$. In the quotient,
    \[
    [T]_{\cJ}
    =[D_{f_U}T+(I-D_{f_U})T]_{\cJ}
    =[D_{f_U}T]_{\cJ},
    \]
    and therefore
    \[
    \|[T]_{\cJ}\|\leq \|D_{f_U}T\|
    =
    \sup_{\ka\in \bN_0^m}\|D_{f_U}T|_{H_{n,\ka}}\|
    =
    \sup_{\ka\in \bN_0^m}|f_U(\ka)|\|T|_{H_{n,\ka}}\|,
    \]
    where the direct-sum decomposition of $T$ was used.
    Thus, there exists $\ka^U\in\bN_0^m$ such that
    \begin{equation}
        \label{eq:nets-ineq-1}
    \|[T]_\cJ\|\leq \frac{1}{|\eta^U|+1}+|f_U(\ka^U)|\|T|_{H_{n,\ka^U}}\|
    \leq
    \frac{1}{|\eta^U|+1}+\|T|_{H_{n,\ka^U}}\|.
    \end{equation}
    As $U\in V_\mu$ was arbitrary, this defines a net $(\ka^U)_{U\in V_\mu}$.

    We show that $(\ka^U)$ converges to $\mu$ as well.
    Since $[T]_\cJ\neq0$, there exists $U_0\in V_\mu$ such that $U\succeq U_0$ implies $\frac{1}{|\eta^U|+1}<\|[T]_\cJ\|/2$.
    Therefore, $U\succeq U_0$ entails
    \[
    |f_U(\ka^U)|\|T|_{H_{n,\ka^U}}\| \geq \|[T]_\cJ\|/2>0.
    \]
    By the construction of $f_U$, this implies that $\ka^U\in U$. Hence, $\lim_{U\in V_\mu}\ka^U=\mu$.

    Finally, let $\varepsilon>0$ and $S\in \Jmuj$ be such that $\|T+S\|\leq \|[T]_\cJ\|+\varepsilon/2$.
    Then, for every $U\in V_\mu$,
    \begin{align*}
    \|T|_{H_{n,\ka^U}}\|
    &\leq
    \|(T+S)|_{H_{n,\ka^U}}\|+\|S|_{H_{n,\ka^U}}\|\\
    &\leq
    \|T+S\|+\|S|_{H_{n,\ka^U}}\|\\
    &\leq
    \|[T]_\cJ\|+\|S|_{H_{n,\ka^U}}\|+\varepsilon/2.
    \end{align*}
    By Lemma~\ref{lem:restricted-T-in-Jmu-to-zero}, the net $(\|S|_{H_{n,\ka^U}}\|)_U$ converges to zero, hence there exists $U_1$ such that $U\succeq U_1$ implies $\|S|_{H_{n,\ka^U}}\|<\varepsilon/2$ and thus
    \[
    \|T|_{H_{n,\ka^U}}\|<
    \|[T]_\cJ\|+\varepsilon.
    \]
    Moreover, by \eqref{eq:nets-ineq-1} there exists $U_2$ such that $U\succeq U_2$ implies
    \[
    \|[T]_\cJ\|< \varepsilon+\|T|_{H_{n,\ka^U}}\|.
    \]
    Hence, choosing $U_3\succeq U_1,U_2$, we find for every $U\succeq U_3$
    \[
    \big|
    \|[T]_\cJ\|- \|T|_{H_{n,\ka^U}}\|
    \big|
    < \varepsilon.\qedhere
    \]
\end{proof}

\begin{thm}
\label{thm:local-alg-finite}
    Let $c_j\in \contTm\cap\LinftyUkjT$.
    Assume that $\mu \in M(\omega)$ with $\omega_j\in\bN_0$. Then there exists a unique isometric isomorphism of Banach algebras
    \[
    \Psi\colon \cB\big(
    T_{k_j,\widehat{c}_{j,\mu}}|_{H_{k_j,\omega_j}}
    \big)\longrightarrow
    \Tkqrcj/\Jmuj
    \]
    such that
    \[
    \Psi\big(T_{k_j,\widehat{c}_{j,\mu}}|_{H_{k_j,\omega_j}}\big)
    =
    [T_{n,c_j}]_{\cJ}.
    \]
    Here, $\cB\big(
    T_{k_j,\widehat{c}_{j,\mu}}|_{H_{k_j,\omega_j}}
    \big)$ is the unital Banach algebra generated by $T_{k_j,\widehat{c}_{j,\mu}}|_{H_{k_j,\omega_j}}$ and the restricted identity $I|_{H_{k_j,\omega_j}}$.
\end{thm}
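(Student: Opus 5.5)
The plan is to define $\Psi$ on polynomials and show it is isometric there; it then extends by continuity. Write $A_\mu:=T_{k_j,\widehat{c}_{j,\mu}}|_{H_{k_j,\omega_j}}$, an operator on the finite-dimensional space $H_{k_j,\omega_j}$. For a polynomial $p$ in one variable, set
\[
\Psi(p(A_\mu))=[p(T_{n,c_j})]_\cJ .
\]
Well-definedness and the isometry property both follow from the single identity
\[
\|[p(T_{n,c_j})]_\cJ\|=\|p(A_\mu)\|,
\]
valid for every polynomial $p$. Once this holds, $\Psi$ is a well-defined isometric unital algebra homomorphism on a dense subalgebra, and it extends uniquely to the closure $\cB(A_\mu)$. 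Its image is closed because $\Psi$ is an isometry. The image also contains $[T_{n,c_j}]_\cJ$ and the unit. Every $D_\gamma\in\Tkqr$ satisfies $[D_\gamma]_\cJ=\gamma(\mu)[I]_\cJ$, since $D_\gamma-\gamma(\mu)I\in\ker_\mu\subset\Jmuj$. Hence the image contains the image of the dense subalgebra $\Dkqrcj$, so $\Psi$ is surjective. Uniqueness is automatic, since a continuous unital homomorphism is determined by its value on a generator.

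To prove the norm identity, I take a net $\ka^\alpha\to\mu$ in $\bN_0^m$. Since the restriction map $M(\Tkqr)\to\Omega$ is continuous and $\omega_j\in\bN_0$ is an isolated point of $\bN_0\cup\{\infty\}$, eventually $\ka^\alpha_j=\omega_j$. By Proposition~\ref{prop:hat-cka-UkjT}, on $H_{n,\ka^\alpha}$ the operator $p(T_{n,c_j})$ is unitarily equivalent to
\[
I\otimes\cdots\otimes p\big(T_{k_j,\widehat{c}_{j,\ka^\alpha}}|_{H_{k_j,\omega_j}}\big)\otimes\cdots\otimes I .
\]
Its norm therefore equals $\|p(T_{k_j,\widehat{c}_{j,\ka^\alpha}}|_{H_{k_j,\omega_j}})\|$. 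The restricted operators act on the fixed space $H_{k_j,\omega_j}$ and are compressions of Toeplitz operators. Corollary~\ref{cor:uniform-approx-aka-amu}, applied to the tensor factor $\widehat{c}_{j,\bullet}$, gives $\|\widehat{c}_{j,\ka^\alpha}-\widehat{c}_{j,\mu}\|_\infty\to0$, so $T_{k_j,\widehat{c}_{j,\ka^\alpha}}|_{H_{k_j,\omega_j}}\to A_\mu$ in norm. These operators are uniformly bounded by $\|c_j\|_\infty$, so polynomials of them converge as well. Consequently
\[
\lim_\alpha\|p(T_{n,c_j})|_{H_{n,\ka^\alpha}}\|=\|p(A_\mu)\|
\]
for every net converging to $\mu$.

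Proposition~\ref{prop:norm-TJ-is-limit-along-net} provides some net, depending on $p(T_{n,c_j})$, along which $\|[p(T_{n,c_j})]_\cJ\|$ equals the limit of the restricted norms. By the previous paragraph that limit is independent of the net and equals $\|p(A_\mu)\|$, which proves the identity. The main care point is justifying that the limit $\widehat{c}_{j,\mu}$ of the one-factor symbols is attained uniformly. The function $\widetilde{c_j}_\ka$ depends only on $\xi_j$, so its sup-norm convergence is exactly that of $\widehat{c}_{j,\ka}$; this is the observation recorded after \eqref{eq:def-cmuj-UkjT}. Also note that $\ka^\alpha_j=\omega_j$ only eventually, which suffices for limits.
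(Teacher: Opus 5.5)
Your proposal is correct and follows essentially the same route as the paper. You define the map on polynomials in the finite-dimensional operator and obtain the norm identity from Proposition~\ref{prop:norm-TJ-is-limit-along-net}, using the eventual equality $\ka_j^\alpha=\omega_j$ and the uniform convergence $\widehat{c}_{j,\ka^\alpha}\to\widehat{c}_{j,\mu}$; you then extend by continuity and get surjectivity from $[D_\gamma]_\cJ=\gamma(\mu)[I]_\cJ$. The only differences are cosmetic: you derive $\ka_j^\alpha=\omega_j$ from continuity of $M(\Tkqr)\to\Omega$ and note that the limit does not depend on the net, whereas the paper inserts the projection $Q_{j,\omega_j}$ to force that stabilization.
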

\begin{proof}
    Let $\cD$ be the space of operators of the form $p(T_{k_j,\widehat{c}_{j,\mu}}|_{H_{k_j,\omega_j}})$, with $p$ a holomorphic polynomial. This space is dense in $\cB\big(
    T_{k_j,\widehat{c}_{j,\mu}}|_{H_{k_j,\omega_j}}
    \big)$.
    Define the map
    \[
    \widetilde{\Psi}\colon \cD\to \Tkqrcj/\Jmuj
    \]
    by
    \[
    \widetilde{\Psi}(p(T_{k_j,\widehat{c}_{j,\mu}}|_{H_{k_j,\omega_j}}))
    =
    p([T_{n,c_j}]_\cJ).
    \]
    We show that this map is well-defined and isometric. Let $p(z)$ be a holomorphic polynomial and set $S_p=p(T_{n,c_j})$. Recall the orthogonal projection $Q_{j,\omega_j}$ defined by \eqref{eq:def-Qjd}. Since $\psi_\mu(Q_{j,\omega_j})=1$, we have
    \[
    [S_p]_\cJ=[Q_{j,\omega_j}S_p]_\cJ.
    \]
    By Proposition~\ref{prop:norm-TJ-is-limit-along-net}, there is a net $(\ka^\alpha)_\alpha$ in $\bN_0^m$ converging to $\mu$ such that
    \[
    \|[S_p]_\cJ\|=\lim_\alpha\|Q_{j,\omega_j}S_p|_{H_{n,\ka^\alpha}}\|.
    \]
    The Gelfand transform of the projection $Q_{j,\omega_j}$ is a continuous $\{0,1\}$-valued function. Since $\psi_\mu(Q_{j,\omega_j})=1$, it equals $1$ on a neighborhood of $\mu$. Therefore, $Q_{j,\omega_j}(\ka^\alpha)=1$ eventually; equivalently, $\ka_j^\alpha=\omega_j$ eventually. Hence, using Proposition~\ref{prop:hat-cka-UkjT}, we obtain
    \[
    \|[S_p]_\cJ\|
    =
    \lim_\alpha
    \big\|p(T_{k_j,\widehat{c}_{j,\ka^\alpha}})|_{H_{k_j,\omega_j}}\big\|.
    \]
    By the uniform convergence $\|\widehat{c}_{j,\ka^\alpha}-\widehat{c}_{j,\mu}\|_\infty\to0$, polynomial functional calculus for bounded operators yields
    \[
    \lim_\alpha
    \big\|p(T_{k_j,\widehat{c}_{j,\ka^\alpha}})|_{H_{k_j,\omega_j}}
    -p(T_{k_j,\widehat{c}_{j,\mu}})|_{H_{k_j,\omega_j}}\big\|=0.
    \]
    Therefore
    \[
    \|p([T_{n,c_j}]_\cJ)\|
    =
    \big\|p(T_{k_j,\widehat{c}_{j,\mu}})|_{H_{k_j,\omega_j}}\big\|.
    \]

    Thus $\widetilde{\Psi}$ is an isometric homomorphism and extends to an isometric homomorphism $\Psi\colon\cB\big(
    T_{k_j,\widehat{c}_{j,\mu}}|_{H_{k_j,\omega_j}}
    \big)\to
    \Tkqrcj/\Jmuj$. It is surjective because the quotient is generated by $[T_{n,c_j}]_\cJ$ and the identity: indeed, $[D_\gamma]_\cJ=\gamma(\mu)[I]_\cJ$ for every $D_\gamma\in\Tkqr$. Being defined through the generators, the isomorphism is unique.
\end{proof}

\begin{thm}
\label{thm:local-alg-infinite}
Let $c_j\in \contTm\cap\LinftyUkjT$.
Assume that $\mu\in M(\omega)$ and that $\omega_j=\infty$. Then there exists a unique isometric isomorphism of Banach algebras
\[
\Phi\colon
\cB\big(\big[T_{k_j,\widehat{c}_{j,\mu}}\big]_{\cK}\big)
\longrightarrow
\Tkqrcj/\Jmuj
\]
such that
\[
\Phi([T_{k_j,\widehat{c}_{j,\mu}}]_\cK)=[T_{n,c_j}]_\cJ.
\]
Here, $\cB\big(\big[T_{k_j,\widehat{c}_{j,\mu}}\big]_{\cK}\big)$ is the unital Banach algebra
generated by $[T_{k_j,\widehat{c}_{j,\mu}}]_{\cK}$ inside the Calkin algebra $\cL(\cA^2(\bB^{k_j}))/\cK(\cA^2(\bB^{k_j}))$ and $\widehat{c}_{j,\mu}$ is given by \eqref{eq:def-cmuj-UkjT}.
\end{thm}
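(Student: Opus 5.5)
I will mirror the proof of Theorem~\ref{thm:local-alg-finite}, replacing the finite-dimensional model by the Calkin model and the fixed block $H_{k_j,\omega_j}$ by blocks $H_{k_j,\ell}$ with $\ell\to\infty$. Let $\cD$ denote the dense subalgebra of $\cB([T_{k_j,\widehat c_{j,\mu}}]_\cK)$ consisting of elements $p([T_{k_j,\widehat c_{j,\mu}}]_\cK)$ with $p$ a holomorphic polynomial. Define $\widetilde\Phi(p([T_{k_j,\widehat c_{j,\mu}}]_\cK))=p([T_{n,c_j}]_\cJ)$. Well-definedness, the homomorphism property and isometry all reduce to the norm identity
\[
\|p([T_{n,c_j}]_\cJ)\|=\|p([T_{k_j,\widehat c_{j,\mu}}]_\cK)\|
\]
for every holomorphic polynomial $p$. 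Given this identity, $\widetilde\Phi$ extends to an isometric homomorphism $\Phi$. Surjectivity follows as before, since $[D_\gamma]_\cJ=\gamma(\mu)[I]_\cJ$. Uniqueness holds because $\Phi$ is prescribed on a generator.

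To prove the norm identity, set $S_p=p(T_{n,c_j})$. By Proposition~\ref{prop:norm-TJ-is-limit-along-net} there is a net $\ka^\alpha\to\mu$ with $\|[S_p]_\cJ\|=\lim_\alpha\|S_p|_{H_{n,\ka^\alpha}}\|$. Proposition~\ref{prop:hat-cka-UkjT} identifies the restriction as $I\otimes\cdots\otimes p(T_{k_j,\widehat c_{j,\ka^\alpha}})|_{H_{k_j,\ka^\alpha_j}}\otimes\cdots\otimes I$. Its norm therefore equals $\|p(T_{k_j,\widehat c_{j,\ka^\alpha}})|_{H_{k_j,\ka_j^\alpha}}\|$, using that $H_{k_j,\ell}$ is invariant under $\bT$-invariant operators. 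By Corollary~\ref{cor:uniform-approx-aka-amu}, $\|\widehat c_{j,\ka^\alpha}-\widehat c_{j,\mu}\|_\infty\to0$. Since $p$ is a polynomial and all symbol norms are bounded by $\|c_j\|_\infty$, this gives
\[
\|[S_p]_\cJ\|=\lim_\alpha\|p(T_{k_j,\widehat c_{j,\mu}})|_{H_{k_j,\ka_j^\alpha}}\|.
\]
Since $\omega_j=\infty$ and the restriction map to $\Omega$ is continuous, $\ka_j^\alpha\to\infty$. Note that $\widehat c_{j,\mu}\in\contdT$ with $d=k_j$: it is $\bT$-invariant, depends only on the angular variable, and has continuous boundary values by the remark following \eqref{eq:def-cmuj-UkjT}. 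Proposition~\ref{prop:norm[T]K=lim-norms-T|H} then gives $\lim_{\ell\to\infty}\|p(T_{k_j,\widehat c_{j,\mu}})|_{H_{k_j,\ell}}\|=\|p([T_{k_j,\widehat c_{j,\mu}}]_\cK)\|$. Composing this sequence limit with the net $\ka_j^\alpha\to\infty$ yields the identity.

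The main obstacle is that Proposition~\ref{prop:norm-TJ-is-limit-along-net} only supplies a net realizing the quotient norm. That suffices, because the argument above shows that \emph{every} net $\ka^\alpha\to\mu$ gives the same limit $\|p([T_{k_j,\widehat c_{j,\mu}}]_\cK)\|$. One subtle point needs care: the convergence $p(T_{k_j,\widehat c_{j,\ka^\alpha}})\to p(T_{k_j,\widehat c_{j,\mu}})$ must hold in operator norm on all of $\cA^2(\bB^{k_j})$, uniformly over the blocks. This is exactly what uniform convergence of the symbols provides, via Corollary~\ref{cor:Tmu-to-Tkappaalpha}.
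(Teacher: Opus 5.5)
Your proposal is correct and follows essentially the same route as the paper. Both arguments realize the quotient norm along a net via Proposition~\ref{prop:norm-TJ-is-limit-along-net}, reduce to the $j$th tensor factor with Proposition~\ref{prop:hat-cka-UkjT}, swap $\widehat{c}_{j,\kappa^\alpha}$ for $\widehat{c}_{j,\mu}$ by uniform convergence of the symbols, and identify the resulting limit with the Calkin norm through Proposition~\ref{prop:norm[T]K=lim-norms-T|H}, with the extension, surjectivity and uniqueness steps matching as well.
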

\begin{proof}
    Let $\cD_\cK$ be the set of all operators of the form $p([T_{k_j,\widehat{c}_{j,\mu}}]_{\cK})$, with $p$ a holomorphic polynomial. The space $\cD_\cK$ is dense in
$\cB([T_{k_j,\widehat{c}_{j,\mu}}]_{\cK})$.

    By construction, $\widehat{c}_{j,\mu}$ is $\bT$-invariant, is constant along radial directions, and has a continuous boundary value; hence $\widehat{c}_{j,\mu}\in\contdT$.

    Let $p$ be a holomorphic polynomial and set $S_p=p(T_{n,c_j})$. By Proposition~\ref{prop:norm-TJ-is-limit-along-net}, there is a net $(\ka^\alpha)_\alpha$ converging to $\mu$ such that
    \[
        \|p([T_{n,c_j}]_\cJ)\|=\lim_\alpha\|S_p|_{H_{n,\ka^\alpha}}\|.
    \]
    Since $\omega_j=\infty$, we have $\ka_j^\alpha\to\infty$. Proposition~\ref{prop:hat-cka-UkjT} yields
    \[
    \|p([T_{n,c_j}]_\cJ)\|
    =
    \lim_\alpha\big\|p(T_{k_j,\widehat{c}_{j,\ka^\alpha}})|_{H_{k_j,\ka_j^\alpha}}\big\|.
    \]
    The uniform convergence $\|\widehat{c}_{j,\ka^\alpha}-\widehat{c}_{j,\mu}\|_\infty\to0$ implies, by continuity of polynomial functional calculus in the operator norm, that
    \[
    \lim_\alpha
    \big\|
    p(T_{k_j,\widehat{c}_{j,\ka^\alpha}})|_{H_{k_j,\ka_j^\alpha}}
    -p(T_{k_j,\widehat{c}_{j,\mu}})|_{H_{k_j,\ka_j^\alpha}}
    \big\|=0.
    \]
    Hence
    \[
    \|p([T_{n,c_j}]_\cJ)\|
    =
    \lim_\alpha
    \big\|p(T_{k_j,\widehat{c}_{j,\mu}})|_{H_{k_j,\ka_j^\alpha}}\big\|.
    \]
    Since $\ka_j^\alpha\to\infty$, Proposition~\ref{prop:norm[T]K=lim-norms-T|H} yields
    \[
    \lim_\alpha
    \big\|p(T_{k_j,\widehat{c}_{j,\mu}})|_{H_{k_j,\ka_j^\alpha}}\big\|
    =
    \|p([T_{k_j,\widehat{c}_{j,\mu}}]_{\cK})\|,
    \]
    and therefore
    \[
    \|p([T_{n,c_j}]_\cJ)\|=\|p([T_{k_j,\widehat{c}_{j,\mu}}]_{\cK})\|
    \]
    for every holomorphic polynomial $p$.

    Define the map $\widetilde{\Phi}\colon \cD_\cK\to \Tkqrcj/\Jmuj$ by
    \[
    \widetilde{\Phi}(p([T_{k_j,\widehat{c}_{j,\mu}}]_\cK))
    =
    p(T_{n,c_j})+\Jmuj.
    \]
    The preceding equality shows that $\widetilde{\Phi}$ is well-defined and isometric. Its definition on polynomials also shows that it is a homomorphism.

    It therefore extends to an isometric homomorphism
    \[
    \Phi\colon \cB([T_{k_j,\widehat{c}_{j,\mu}}]_{\cK})\to
    \Tkqrcj/\Jmuj.
    \]
    It is surjective because $\Phi([I]_\cK)=[I]_\cJ$, where $[D_\gamma]_\cJ=\gamma(\mu)[I]_{\cJ}$ for every $D_\gamma\in \Tkqr$, and $\Phi([T_{k_j,\widehat{c}_{j,\mu}}]_\cK)=[T_{n,c_j}]_\cJ$. Because it is determined by the generators, the isomorphism is unique.
\end{proof}

\begin{cor}
\label{cor:maximal-ideal-space-limit-algebras}
    One has
    \[
        M(\Tkqrcj/\Jmuj)\cong
        \begin{cases}
        \pconv{\widehat{c}_{j,\mu}|_{S^{2k_j-1}}(S^{2k_j-1})},&\quad \omega_j=\infty,\\
        \operatorname{sp}(T_{k_j,\widehat{c}_{j,\mu}}|_{H_{k_j,\omega_j}}),&\quad \omega_j\in\bN_0.
        \end{cases}
        \]
\end{cor}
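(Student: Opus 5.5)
The plan is to combine the two local identifications, Theorems~\ref{thm:local-alg-finite} and~\ref{thm:local-alg-infinite}, with the general fact recalled in the preliminaries: for an element $T$ of a unital Banach algebra, $M(\cB(T))\cong\pconv{\operatorname{sp}_{\cA}(T)}$. Since the isomorphisms $\Psi$ and $\Phi$ are isometric isomorphisms of unital Banach algebras, they induce homeomorphisms of maximal ideal spaces. Hence $M(\Tkqrcj/\Jmuj)$ is homeomorphic to the maximal ideal space of the corresponding model algebra, which is singly generated. Moreover, $\Tkqrcj/\Jmuj$ is itself singly generated by $[T_{n,c_j}]_\cJ$ together with the unit, because $[D_\gamma]_\cJ=\gamma(\mu)[I]_\cJ$. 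So in both cases it suffices to compute the maximal ideal space of a singly generated unital Banach algebra.

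\emph{Case $\omega_j=\infty$.} By Theorem~\ref{thm:local-alg-infinite}, $M(\Tkqrcj/\Jmuj)\cong M(\cB([T_{k_j,\widehat c_{j,\mu}}]_\cK))$. As noted in the proof of that theorem, $\widehat c_{j,\mu}\in\contdT$, so it has a continuous boundary value. Part~3 of Proposition~\ref{prop:Calkin-continuous-boundary} then identifies this maximal ideal space with $\pconv{\widehat c_{j,\mu}|_{S^{2k_j-1}}(S^{2k_j-1})}$, which is the claimed formula.

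\emph{Case $\omega_j\in\bN_0$.} By Theorem~\ref{thm:local-alg-finite}, $M(\Tkqrcj/\Jmuj)\cong M(\cB(A))$, where $A=T_{k_j,\widehat c_{j,\mu}}|_{H_{k_j,\omega_j}}$ acts on the finite-dimensional space $H_{k_j,\omega_j}$. The general fact gives $M(\cB(A))\cong\pconv{\operatorname{sp}(A)}$. The key additional step is that $\operatorname{sp}(A)$ is a finite set of eigenvalues, and a finite subset $F\subset\bC$ is polynomially convex. To see this, take $z\notin F$ and the polynomial $q(w)=\prod_{\lambda\in F}(w-\lambda)$, normalized if necessary via Lagrange interpolation, so that $q=0$ on $F$ but $q(z)\neq0$. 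Hence $\pconv{\operatorname{sp}(A)}=\operatorname{sp}(A)$.

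Alternatively, in the finite case one can argue directly. $\cB(A)$ is the algebra of polynomials in the matrix $A$, which is already closed. Its characters are evaluations $p(A)\mapsto p(\lambda)$ at eigenvalues $\lambda$ of $A$, because $\cB(A)\cong\bC[w]/(m_A)$ with $m_A$ the minimal polynomial. I expect no real obstacle here. The only points to check carefully are that an isometric algebra isomorphism transports characters bijectively and homeomorphically in the weak-$*$ topology, and that the spectrum of $A$ is taken relative to $\cL(H_{k_j,\omega_j})$, which coincides with its spectrum in $\cB(A)$ in finite dimensions.
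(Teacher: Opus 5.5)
Your proposal is correct and follows essentially the same route as the paper. You transfer characters through the isometric isomorphisms of Theorems~\ref{thm:local-alg-infinite} and~\ref{thm:local-alg-finite}, then invoke part~3 of Proposition~\ref{prop:Calkin-continuous-boundary} in the infinite case and the polynomial convexity of the finite spectrum in the finite case. Your extra details fill in steps the paper only asserts: the polynomial $\prod_{\nu\in F}(w-\nu)$ that shows finite sets are polynomially convex, and the agreement of the spectra in $\cB(A)$ and $\cL(H_{k_j,\omega_j})$.
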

\begin{proof}
    The infinite-coordinate case follows from Theorem~\ref{thm:local-alg-infinite} and Proposition~\ref{prop:Calkin-continuous-boundary}. In the finite-coordinate case, Theorem~\ref{thm:local-alg-finite} reduces the quotient to the unital Banach algebra generated by an operator on the finite-dimensional space $H_{k_j,\omega_j}$. Its spectrum is finite and therefore polynomially convex, so the maximal ideal space is the displayed spectrum.
\end{proof}

\subsection{\texorpdfstring{Gelfand theory of $\Tkqrph$}{Gelfand theory of Tkqrph}}

For each $j\in\{1,\ldots,m\}$ fix a symbol $c_j\in \LinftyUkjT\cap \contTm$.
Let
\[
\Tkqrph
=
\cB(\Tkqr,T_{c_1},\ldots,T_{c_m})
\]
be the Banach algebra generated by $\Tkqr$ and the Toeplitz operators $T_{c_1},\ldots,T_{c_m}$.
Let $M(\Tkqrph)$ denote its compact space of maximal ideals. Let $\Dkqrph$ denote the dense, nonclosed subalgebra of $\Tkqrph$ consisting of finite linear combinations of finite products of the generators of $\Tkqrph$:
\[
\Dkqrph=\operatorname{span}\{D_\gamma T^\rho_c,\ D_\gamma\in\Tkqr,\ \rho\in\bN_0^m\},
\]
where
\[
T^\rho_c=T_{n,c_1}^{\rho_1}\cdots T_{n,c_m}^{\rho_m}.
\]

Note that the algebra $\Tkqrph$ is generated by the $C^*$-algebra $\Tkqr$ and the unital Banach algebra $\Tph$ generated by the operators $T_{c_1},\ldots,T_{c_m}$. Thus, as in earlier work, the space $M(\Tkqrph)$ is continuously embedded in the Cartesian product of the maximal ideal spaces of the two algebras:
\[
M(\Tkqrph)\hookrightarrow
M(\Tkqr)\times M(\Tph)
\]
through the map
\[
M(\Tkqrph)\ni \psi \mapsto (\psi|_{\Tkqr},\psi|_{\Tph}).
\]
Similarly, because the algebra $\Tph$ is generated by the operators $T_{c_1},\ldots,T_{c_m}$ (and the identity operator $I$), we have
\[
M(\Tph)\subset
M(\mathcal{B}(T_{c_1}))\times\cdots\times
M(\mathcal{B}(T_{c_m})),
\]
where $\mathcal{B}(T_{c_j})$ denotes the unital Banach algebra generated by $T_{c_j}$. For any operator $T$, the maximal ideal space of the unital Banach algebra generated by $T$ is homeomorphic to the polynomially convex hull of $\operatorname{sp}(T)$.
Consequently, we obtain
\[
M(\Tph)\subset \pconv{\operatorname{sp}(T_{c_1})}\times\cdots\times\pconv{\operatorname{sp}(T_{c_m})}\subset \bC^m.
\]

To describe the Gelfand theory of $\Tkqrph$, we introduce some notation.
Let $\mu\in M(\Tkqr)$ and suppose that $\mu\in M(\omega)$, according to \eqref{eq:M=bicup-Momega}. We define the sets
\begin{equation}
\label{eq:def-D-mu-cj}
D_j(\mu,c_j)=
\begin{cases}
\operatorname{sp}(T_{k_j,\widehat{c}_{j,\mu}}|_{H_{k_j,d}}),&\quad\textup{if}\quad\omega_j=d\in\bN_0,\\
\widehat{c}_{j,\mu}|_{S^{2k_j-1}}(S^{2k_j-1}),&\quad\textup{if}\quad\omega_j=\infty.
\end{cases}
\end{equation}
Here, $\widehat{c}_{j,\mu}|_{S^{2k_j-1}}$ denotes the continuous boundary value of $\widehat{c}_{j,\mu}$. Since $\widehat{c}_{j,\mu}$ is constant along radial directions, a representative can be chosen so that
\[
\widehat{c}_{j,\mu}(r\xi)
=\widehat{c}_{j,\mu}|_{S^{2k_j-1}}(\xi)
\]
for almost every $(r,\xi)\in [0,1)\times S^{2k_j-1}$.

\begin{prop}
\label{prop:cup-subset-M(Tkqrph)}
    One has
    \[
    \bigcup_{\mu\in M(\Tkqr)}\{\mu\}\times\pconv{D_1(\mu,c_1)}\times\cdots\times \pconv{D_m(\mu,c_m)}
    \subset M(\Tkqrph).
    \]
\end{prop}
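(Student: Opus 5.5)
The plan is to build, for each $\mu\in M(\Tkqr)$ and each point $z=(z_1,\ldots,z_m)$ of the product of hulls, a character $\psi$ of $\Tkqrph$ with $\psi(D_\gamma)=\gamma(\mu)$ and $\psi(T_{n,c_j})=z_j$. Its restriction to $\Tkqr$ is then $\psi_\mu$, and its values on the generators $T_{c_j}$ are $z$, so under the embedding $M(\Tkqrph)\hookrightarrow M(\Tkqr)\times M(\Tph)$ it is exactly the point $(\mu,z)$.

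\emph{Setup.} Let $\cJ_\mu$ be the closed ideal of $\Tkqrph$ generated by $\ker_\mu$, and set $Q=\Tkqrph/\cJ_\mu$. Since $[D_\gamma]=\gamma(\mu)[I]$, the algebra $Q$ is the closure of the polynomials $p([T_{n,c_1}],\ldots,[T_{n,c_m}])$. It therefore suffices to prove the inequality
\begin{equation*}
|p(z)|\le \big\|[p(T_{n,c_1},\ldots,T_{n,c_m})]\big\|_{Q}
\end{equation*}
for every holomorphic polynomial $p$ in $m$ variables. Once this holds, $p([T_c])\mapsto p(z)$ is well defined, multiplicative and contractive on a dense subalgebra. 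It extends to a character of $Q$, and composing with the quotient map gives the desired $\psi$. By definition of $\pconv{\cdot}$ and the product formula for hulls, it is enough to prove the inequality for $z\in D_1(\mu,c_1)\times\cdots\times D_m(\mu,c_m)$. Indeed, for $z$ in the hull, $|p(z)|\le\sup_{w\in\prod_j D_j}|p(w)|$, and each $|p(w)|$ is bounded by the quotient norm.

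\emph{Lower bound for the quotient norm.} Fix a net $\ka^\alpha\to\mu$ in $\bN_0^m$, which exists by density. The argument of Proposition~\ref{prop:norm-TJ-is-limit-along-net} adapts to $\Tkqrph$, and Lemma~\ref{lem:restricted-T-in-Jmu-to-zero} has the same proof with $T_c^\rho$ in place of $T_{n,c_j}^p$. Hence for $S\in\Tkqrph$ and $R\in\cJ_\mu$,
\begin{equation*}
\limsup_\alpha\|S|_{H_{n,\ka^\alpha}}\|\le \|S+R\|+\lim_\alpha\|R|_{H_{n,\ka^\alpha}}\|=\|S+R\|,
\end{equation*}
and so $\limsup_\alpha\|S|_{H_{n,\ka^\alpha}}\|\le\|[S]\|_Q$.

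Take $S=p(T_{n,c_1},\ldots,T_{n,c_m})$. By Proposition~\ref{prop:hat-cka-UkjT}, the restriction $S|_{H_{n,\ka}}$ is unitarily equivalent to $p(A_1^\ka,\ldots,A_m^\ka)$, where $A_j^\ka$ acts as $T_{k_j,\widehat c_{j,\ka}}|_{H_{k_j,\ka_j}}$ on the $j$th tensor factor and as the identity elsewhere. By Corollary~\ref{cor:uniform-approx-aka-amu}, $\|\widehat c_{j,\ka^\alpha}-\widehat c_{j,\mu}\|_\infty\to0$. Since $p$ is a polynomial in uniformly bounded operators, we may replace $\widehat c_{j,\ka^\alpha}$ by $\widehat c_{j,\mu}$ at the cost of an error that is $o(1)$ in norm.

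\emph{Approximate joint eigenvectors.} For each $j$ we seek unit vectors $v_j^\alpha\in H_{k_j,\ka_j^\alpha}$ with
\begin{equation*}
\big\|(T_{k_j,\widehat c_{j,\mu}}-z_j)v_j^\alpha\big\|\to0.
\end{equation*}
\begin{itemize}
\item If $\omega_j=d\in\bN_0$, then $\ka_j^\alpha=d$ eventually, as in the proof of Theorem~\ref{thm:local-alg-finite}. Since $H_{k_j,d}$ is finite-dimensional, $z_j\in D_j$ is an eigenvalue, and we take $v_j^\alpha$ to be a fixed eigenvector.
\item If $\omega_j=\infty$, then $z_j=\widehat c_{j,\mu}|_{S^{2k_j-1}}(\xi)$ for some $\xi\in S^{2k_j-1}$, and $\ka_j^\alpha\to\infty$. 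Pick $w$ with $w/|w|=\xi$ and take $v_j^\alpha=k_{k_j,\ka_j^\alpha}(\cdot,w)\in H_{k_j,\ka_j^\alpha}$. Here $\widehat c_{j,\mu}\in\contdT$, and it can be chosen radially constant, so that $\widehat c_{j,\mu}(w/|w|)=z_j$. Proposition~\ref{prop:Berezin-approximation} then gives the required convergence. Because $T_{k_j,\widehat c_{j,\mu}}$ is $\bT$-invariant, it preserves $H_{k_j,\ell}$, so only the restriction matters.
\end{itemize}

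Set $v^\alpha=v_1^\alpha\otimes\cdots\otimes v_m^\alpha\in\bdH_{\bdk,\ka^\alpha}$. The factors $A_j$ commute and are uniformly bounded, so a telescoping argument over the monomials of $p$ gives
\begin{equation*}
\big\|\big(p(A^\mu)-p(z)\big)v^\alpha\big\|\to0 .
\end{equation*}
Therefore $|p(z)|\le\liminf_\alpha\|S|_{H_{n,\ka^\alpha}}\|\le\|[S]\|_Q$, which is the inequality required in the first step.

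The main obstacle is the infinite-coordinate case. There the approximate eigenvectors must live in exactly the degree $\ka_j^\alpha$ dictated by the net, which is why the Berezin-type estimate along $\ell\to\infty$ is needed. One must also check that the lemma on $\cJ_\mu$ and the limsup bound transfer verbatim from $\Tkqrcj$ to the multi-generator algebra $\Tkqrph$.
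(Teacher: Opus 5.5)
Your proof is correct and uses the same core construction as the paper. Along a net $\ka^\alpha\to\mu$, you take tensor products of exact eigenvectors in the finite coordinates and normalized reproducing kernels $k_{k_j,\ka_j^\alpha}(\cdot,w_j)$ in the infinite coordinates as approximate joint eigenvectors. You then pass to $\pconv{\cdot}$ by the same polynomial-convexity estimate.

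The only difference is packaging. You bound $|p(z)|$ by the norm in $\Tkqrph/\cJ_\mu$, which requires transferring Lemma~\ref{lem:restricted-T-in-Jmu-to-zero} to the multi-generator algebra. The paper avoids the ideal by defining the functional on $\Dkqrph$ directly as $\lim_\alpha\langle Th^\alpha,h^\alpha\rangle$. That limit is automatically well defined and bounded by $\|T\|$, and the $\Tkqr$ coefficients are handled by $\gamma_\rho(\ka^\alpha)\to\gamma_\rho(\mu)$.
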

\begin{proof}
    Denote $D(\mu,c)=D_1(\mu,c_1)\times\cdots\times D_m(\mu,c_m)$.
    We first prove that $\{\mu\}\times D(\mu,c)\subset M(\Tkqrph)$.

    Let $(\mu,\zeta)\in\{\mu\}\times D(\mu,c)$ and let $(\ka^\alpha)_\alpha$ be a net in $\bN_0^m$ converging to $\mu$. After passing to a cofinal tail if necessary, we may assume that $\ka_j^\alpha=\omega_j$ whenever $\omega_j\in\bN_0$, while $\ka_j^\alpha\to\infty$ whenever $\omega_j=\infty$.

    \begin{itemize}
        \item 
    For each $j$ with $\omega_j=d\in\bN_0$, choose a unit eigenvector $v_j\in H_{k_j,d}$ such that
    \[
    T_{k_j,\widehat{c}_{j,\mu}}v_j=\zeta_jv_j.
    \]
    Such a vector exists because $H_{k_j,d}$ is finite-dimensional and
    \[
    \zeta_j\in
    \operatorname{sp}\big(T_{k_j,\widehat{c}_{j,\mu}}|_{H_{k_j,d}}\big).
    \]
    For these indices, set $g_j^\alpha=v_j$.

    \item 
    For each $j$ with $\omega_j=\infty$, choose $\xi_j\in S^{2k_j-1}$ such that
    \[
    \widehat{c}_{j,\mu}|_{S^{2k_j-1}}(\xi_j)=\zeta_j,
    \]
    and choose $w_j\in\bB^{k_j}\setminus\{0\}$ with $w_j/|w_j|=\xi_j$. We then define
    \[
    g_j^\alpha=k_{k_j,\ka_j^\alpha}(\cdot,w_j)\in H_{k_j,\ka_j^\alpha},
    \]
    where $k_{k_j,\ka_j^\alpha}$ is the normalized reproducing kernel introduced in Proposition~\ref{prop:Berezin-approximation}. Then each $g_j^\alpha$ is a unit vector.
    \end{itemize}

    Define
    \[
    g^\alpha=g_1^\alpha\otimes\cdots\otimes g_m^\alpha\in\bdH_{\bdk,\ka^\alpha},
    \qquad
    h^\alpha=U^*g^\alpha\in H_{n,\ka^\alpha}.
    \]
    Let $T=\sum_{\rho\in F}D_{\gamma_\rho}T_c^\rho\in\Dkqrph$, where $F\subset\bN_0^m$ is finite. Since $D_{\gamma_\rho}$ acts as the scalar $\gamma_\rho(\ka^\alpha)$ on $H_{n,\ka^\alpha}$, Proposition~\ref{prop:hat-cka-UkjT} yields
    \[
    T_c^\rho h^\alpha
    =
    U^*\big(
    T_{k_1,\widehat{c}_{1,\ka^\alpha}}^{\rho_1}g_1^\alpha\otimes\cdots\otimes
    T_{k_m,\widehat{c}_{m,\ka^\alpha}}^{\rho_m}g_m^\alpha
    \big).
    \]
    The uniform convergence of the symbols implies $\|T_{k_j,\widehat{c}_{j,\ka^\alpha}}-T_{k_j,\widehat{c}_{j,\mu}}\|\to0$, while Proposition~\ref{prop:Berezin-approximation} gives
    $\|(T_{k_j,\widehat{c}_{j,\mu}}-\zeta_j I)g_j^\alpha\|\to0$, whenever $\omega_j=\infty$. It follows that
    \[
    \lim_\alpha
    \langle T_{k_j,\widehat{c}_{j,\ka^\alpha}}^{\rho_j}g_j^\alpha,g_j^\alpha\rangle
    =\zeta_j^{\rho_j},\quad j=1,\ldots,m.
    \]
    Consequently,
    \[
    \lim_\alpha\langle Th^\alpha,h^\alpha\rangle
    =
    \sum_{\rho\in F}\gamma_\rho(\mu)\zeta^\rho.
    \]
    Now consider the map on $\Dkqrph$ given by
    \[
    \psi_{(\mu,\zeta)}\left(\sum_{\rho\in F}D_{\gamma_\rho}T_c^\rho\right)
    =
    \sum_{\rho\in F}\gamma_\rho(\mu)\zeta^\rho.
    \]
    The preceding limit shows that this is a well-defined bounded linear functional on $\Dkqrph$.
    To verify multiplicativity, let $D_{\gamma_1},D_{\gamma_2}\in\Tkqr$ and $\rho_1,\rho_2\in\bN_0^m$. Then
    \begin{align*}
    \psi_{(\mu,\zeta)}
    ((D_{\gamma_1}T^{\rho_1}_c)(D_{\gamma_2}T^{\rho_2}_c))
    &=
    \psi_{(\mu,\zeta)}
    (
    D_{\gamma_1\gamma_2}T^{\rho_1+\rho_2}_c
    )\\
    &=
    (\gamma_1\gamma_2)(\mu)\zeta^{\rho_1+\rho_2}\\
    &=
    \psi_{(\mu,\zeta)}(D_{\gamma_1}T_c^{\rho_1})\psi_{(\mu,\zeta)}(D_{\gamma_2}T_c^{\rho_2}),
    \end{align*}
    where addition of multi-indices is componentwise.
    Thus $\psi_{(\mu,\zeta)}$ is multiplicative and therefore extends to a multiplicative functional on $\Tkqrph$. This proves
    \[
    \{\mu\}\times D(\mu,c)\subset M(\Tkqrph).
    \]
    Finally, let $\zeta\in\pconv{D(\mu,c)}$. If $T=\sum_{\rho\in F}D_{\gamma_\rho}T_c^\rho\in \Dkqrph$, then the polynomial $z\mapsto\sum_{\rho\in F}\gamma_\rho(\mu)z^\rho$ satisfies
    \begin{align*}
    \left|\sum_{\rho\in F}\gamma_\rho(\mu)\zeta^\rho\right|
    &\leq \sup_{z\in D(\mu,c)}
    \left|\sum_{\rho\in F}\gamma_\rho(\mu)z^\rho\right|=
    \sup_{z\in D(\mu,c)}|\psi_{(\mu,z)}(T)|
    \leq
    \|T\|.
    \end{align*}
    Thus $(\mu,\zeta)$ defines a bounded multiplicative functional on $\Dkqrph$ that extends continuously to $\Tkqrph$. Therefore,
    \[
    \{\mu\}\times\pconv{D(\mu,c)}\subset M(\Tkqrph),
    \]
    which is the desired inclusion.
\end{proof}

We conclude by characterizing the Gelfand spectrum and Gelfand transform of the algebra $\Tkqrph$. The inclusion established in Proposition~\ref{prop:cup-subset-M(Tkqrph)} was proved in \cite[Section 3]{RodriguezRodriguezPHD2024}, and the reverse inclusion was conjectured there. The following theorem proves that this conjecture is correct. It also resolves the continuous-symbol part of Open Problem~4 posed at the end of
\cite{Bauer_Rodriguez2022}; the part concerning symbols without continuity
assumptions remains open here.
\begin{thm}
\label{thm:Gelfand theory Tkqrph}
The maximal ideal space of the Banach algebra $\Tkqrph$ is homeomorphic to the subset of $M(\Tkqr)\times\bC^m$ given by
    \begin{equation}
    \label{eq:cup-maximal-ideal-space-Tkqrph}
    \bigcup_{\mu\in M(\Tkqr)}\{\mu\}\times\pconv{D_1(\mu,c_1)}\times\cdots\times \pconv{D_m(\mu,c_m)}.
    \end{equation}
    The homeomorphism is given by
    \[
    M(\Tkqrph)\ni\psi\mapsto (\psi|_{\Tkqr},\psi(T_{n,c_1}),\ldots,\psi(T_{n,c_m})).
    \]
    The Gelfand transform
    \[
    \Gamma\colon \Tkqrph\longrightarrow C(M(\Tkqrph))
    \]
    is determined on the dense subalgebra $\Dkqrph$:
    \[
    \Gamma\left(
    \sum_{\rho\in F}D_{\gamma_\rho}T_c^\rho
    \right)(\mu,\zeta)
    =
    \sum_{\rho\in F}\gamma_\rho(\mu)\zeta^\rho,\quad
    (\mu,\zeta)\in M(\Tkqrph),
    \]
    where $F$ is a finite subset of $\bN_0^m$, $D_{\gamma_\rho}\in\Tkqr$ for all $\rho\in F$ and $T_c^\rho=T_{n,c_1}^{\rho_1}\cdots T_{n,c_m}^{\rho_m}$.
\end{thm}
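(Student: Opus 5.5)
The plan is to prove the reverse of the inclusion in Proposition~\ref{prop:cup-subset-M(Tkqrph)}, and then read off the homeomorphism and the Gelfand transform formally.

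\emph{Injectivity and continuity of the map.} Let $\psi\in M(\Tkqrph)$ and set $\mu=\psi|_{\Tkqr}\in M(\Tkqr)$ and $\zeta_j=\psi(T_{n,c_j})$. Since $\psi$ is multiplicative and $\Dkqrph$ is dense, $\psi$ is determined by $(\mu,\zeta)$ through
\[
\psi\Big(\sum_\rho D_{\gamma_\rho}T_c^\rho\Big)=\sum_\rho\gamma_\rho(\mu)\zeta^\rho .
\]
Hence the map $\psi\mapsto(\mu,\zeta)$ is injective. It is continuous for the Gelfand topology, because each coordinate is evaluation at an element of the algebra. A continuous injection from a compact space into the Hausdorff space $M(\Tkqr)\times\bC^m$ is a homeomorphism onto its image. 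The displayed formula for $\Gamma$ is then just the formula above. Everything therefore reduces to showing $\zeta\in\pconv{D_1(\mu,c_1)}\times\cdots\times\pconv{D_m(\mu,c_m)}$, since the product of hulls equals the hull of the product.

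\emph{Reduction to the local quotients.} Fix $j$. The restriction $\psi|_{\Tkqrcj}$ is a character of $\Tkqrcj$ that vanishes on $\ker_\mu$. It therefore vanishes on the closed ideal $\Jmuj$ and factors through a character of $\Tkqrcj/\Jmuj$. By Corollary~\ref{cor:maximal-ideal-space-limit-algebras}, together with Theorems~\ref{thm:local-alg-finite} and~\ref{thm:local-alg-infinite}, this quotient is singly generated by $[T_{n,c_j}]_\cJ$. Its maximal ideal space is identified, via evaluation at $[T_{n,c_j}]_\cJ$, with $\pconv{D_j(\mu,c_j)}$. In the finite case this holds because the spectrum is finite, hence polynomially convex. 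In the infinite case it follows from Proposition~\ref{prop:Calkin-continuous-boundary}. Consequently $\zeta_j=\psi(T_{n,c_j})\in\pconv{D_j(\mu,c_j)}$ for each $j$, which gives $\zeta\in\prod_j\pconv{D_j(\mu,c_j)}$.

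\emph{Main obstacle.} The only subtle step is to check that the identification in the corollary really means the following: a character $\chi$ of a singly generated unital Banach algebra $\cB(x)$ satisfies $\chi(x)\in\pconv{\operatorname{sp}(x)}$, where $\operatorname{sp}(x)$ is the spectrum computed in $\cB(x)$, and this spectrum transfers across the isometric isomorphisms $\Psi$ and $\Phi$. Two facts need care. First, $\Psi$ and $\Phi$ map generator to generator, so they transport spectra relative to the generated algebras. Second, the hull of the spectrum relative to $\cB(x)$ coincides with the hull of the spectrum relative to the ambient algebra (the Calkin algebra, or $\cL(H_{k_j,\omega_j})$), and in the Calkin case the latter equals $\pconv{\widehat c_{j,\mu}|_{S^{2k_j-1}}(S^{2k_j-1})}$ by Proposition~\ref{prop:Calkin-continuous-boundary}.

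To handle this directly, I would avoid spectra and use the inequality $|\chi(p(x))|\le\|p(x)\|$. The isometries give
\[
|p(\zeta_j)|\le\|p([T_{n,c_j}]_\cJ)\|=\|p(y)\|,
\]
where $y$ is the local model, $T_{k_j,\widehat c_{j,\mu}}|_{H_{k_j,\omega_j}}$ or $[T_{k_j,\widehat c_{j,\mu}}]_\cK$. In the Calkin case, $\|p(y)\|=\sup_{D_j}|p|$ by Proposition~\ref{prop:Calkin-continuous-boundary}. In the finite-dimensional case, $\|p(y)\|\ge\max_{\operatorname{sp}}|p|$ always holds, but equality can fail. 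I would fix this by applying the bound to $p^N$ and taking $N$th roots, using the spectral radius formula $\lim_N\|p(y)^N\|^{1/N}=\max_{\operatorname{sp}(y)}|p|$. This gives $|p(\zeta_j)|\le\sup_{D_j(\mu,c_j)}|p|$ for every holomorphic polynomial $p$, which is exactly the statement $\zeta_j\in\pconv{D_j(\mu,c_j)}$ and completes the proof.
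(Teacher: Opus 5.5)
Your proposal is correct and follows essentially the same route as the paper. You restrict $\psi$ to $\Tkqrcj$, descend to the local quotient $\Tkqrcj/\Jmuj$, place $\zeta_j$ in $\pconv{D_j(\mu,c_j)}$ via the local identifications, and conclude with injectivity on the dense subalgebra $\Dkqrph$ and the compact-to-Hausdorff argument. The one difference is that you spell out, using $|\chi(p(x))|\le\|p(x)\|$, the isometries of Theorems~\ref{thm:local-alg-finite} and~\ref{thm:local-alg-infinite}, and the spectral radius formula, the step the paper simply cites from Corollary~\ref{cor:maximal-ideal-space-limit-algebras}.
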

\begin{proof}
    By the preceding proposition, it remains only to prove that
    \[
    M(\Tkqrph)\subset
    \bigcup_{\mu\in M(\Tkqr)}\{\mu\}\times\pconv{D_1(\mu,c_1)}\times\cdots\times \pconv{D_m(\mu,c_m)}.
    \]

    Let $\psi\in M(\Tkqrph)$, and write $(\mu,\zeta)\in M(\Tkqr)\times\bC^m$ for its image under the natural embedding described above. Denote the corresponding character by $\psi_{(\mu,\zeta)}$.

    Let $j\in\{1,\ldots,m\}$. The restriction $\psi_{(\mu,\zeta)}|_{\Tkqrcj}$ defines a multiplicative functional on the Banach algebra $\Tkqrcj$. Moreover,
    \[
    \psi_{(\mu,\zeta)}(D_\gamma T^\ell_{n,c_j})=\gamma(\mu)\zeta_j^\ell=0,
    \]
    for every $D_\gamma\in\Tkqr$ with $\gamma(\mu)=0$. Hence, $\psi_{(\mu,\zeta)}|_{\Tkqrcj}$ descends to the quotient $\Tkqrcj/\Jmuj$ and therefore defines a multiplicative functional $\widetilde{\psi}_{(\mu,\zeta)}$ on the algebra $\Tkqrcj/\Jmuj$.

    Corollary~\ref{cor:maximal-ideal-space-limit-algebras} now gives
    \[
    \zeta_j=
    \psi_{(\mu,\zeta)}(T_{n,c_j})
    =
    \widetilde{\psi}_{(\mu,\zeta)}([T_{n,c_j}]_\cJ)
    \in \pconv{D_j(\mu,c_j)}.
    \]
Therefore, the map
\[
M(\Tkqrph)\ni \psi\mapsto
(\psi|_{\Tkqr},\psi(T_{n,c_1}),\ldots,\psi(T_{n,c_m}))\in
M(\Tkqr)\times\bC^m
\]
has image equal to the set \eqref{eq:cup-maximal-ideal-space-Tkqrph}. The map is injective because $\Tkqrph$ is generated by $\Tkqr$ and $T_{n,c_1},\ldots,T_{n,c_m}$. Because $M(\Tkqrph)$ is compact and $M(\Tkqr)\times\bC^m$ is Hausdorff, it is therefore a homeomorphism onto its image.
The formula for the Gelfand transform follows by applying these characters in $M(\Tkqrph)$ to the operators in $\Dkqrph$.
\end{proof}

Applying the preceding results to the special cases in Remark~\ref{rem:quasi-homogeneous-history} recovers the corresponding conclusions from earlier works (see \cite{Bauer_Rodriguez2022,BauerVasilevski2012,BauerVasilevski2013,BauerVasilevski2015,RodriguezRodriguezPHD2024}).

\subsection{A final example}
\label{ex:explicit-fibers}

Consider again the setting $n=4$, $\lambda=0$, $m=2$, and
$\bdk=(2,2)$. Define the continuous $\bT$-invariant functions
$P,Q\colon S^3\to\bC$ by
\[
P(w_1,w_2)=\re(w_1\overline{w_2}),
\qquad
Q(w_1,w_2)=w_1\overline{w_2}.
\]
We regard these functions as radially constant symbols on $\bB^2$.
The symbol $Q$ is a standard quasi-homogeneous symbol of degree $1$ studied in
\cite{BauerVasilevski2012} (see also Remark~\ref{rem:quasi-homogeneous-history}), while $P$ is a particular case of the symbols studied in \cite[Section 3.7]{RodriguezRodriguezPHD2024}.
Notice that
\[
P(S^3)=[-1/2,1/2],
\qquad
Q(S^3)=\{z\in\bC:|z|\leq 1/2\}.
\]
Following the recipe from Example~\ref{ex:example2-symbol}, fix $\alpha,\beta>0$ and define symbols $c_1,c_2$ by
\[
f_{c_1}(\bdr,\bdxi)
=
r_1^{2\alpha}Q(\xi_1)
+
r_2^{2\beta}P(\xi_1),
\]
and
\[
f_{c_2}(\bdr,\bdxi)
=
r_1^{2\alpha}P(\xi_2)
+
r_2^{2\beta}Q(\xi_2),
\]
where
\[
\bdr=(r_1,r_2)\in\tau(\bB^2),
\qquad
\bdxi=(\xi_1,\xi_2)\in S^3\times S^3.
\]
Since $P$ and $Q$ are continuous, we have
$c_j\in\contTm\cap L^\infty(\bB^4)^{U(\bdk,j,\bT)}$ for $j=1,2$.

For $\ka=(\ka_1,\ka_2)\in\bN_0^2$, set
\[
a_\ka
=
\frac{B(|\ka|+5,\alpha)}
     {B(\ka_1+2,\alpha)},
\qquad
b_\ka
=
\frac{B(|\ka|+5,\beta)}
     {B(\ka_2+2,\beta)},
\]
where $B(\cdot,\cdot)$ denotes the Beta function, as in
Example~\ref{ex:example2-symbol}. By
\eqref{eq:example2-formula}, we obtain
\[
\widehat c_{1,\ka}
=
a_\ka Q+b_\ka P,
\qquad
\widehat c_{2,\ka}
=
a_\ka P+b_\ka Q.
\]
Since the sequences $a_\bullet$ and $b_\bullet$ belong to $\Tkqr$,
for every $\mu\in M(\Tkqr)$ we may define
\[
a_\mu=\psi_\mu(a_\bullet),
\qquad
b_\mu=\psi_\mu(b_\bullet).
\]
Therefore,
\begin{equation}
\label{eq:final-example-local-symbols}
\widehat c_{1,\mu}=a_\mu Q+b_\mu P,
\qquad
\widehat c_{2,\mu}=a_\mu P+b_\mu Q.
\end{equation}
For $\mu\in M(\Tkqr)$, denote the fiber of $M(\Tkqrph)$ over $\mu$, according to Theorem~\ref{thm:Gelfand theory Tkqrph},
by
\[
\mathfrak F_\mu
=
\pconv{D_1(\mu,c_1)}
\times
\pconv{D_2(\mu,c_2)}.
\]
We now describe how $\mathfrak F_\mu$ changes according to the
stratum of $M(\Tkqr)$ containing $\mu$.

\medskip

\noindent
\textbf{1. Finite coordinates.}
Let $\mu=(d_1,d_2)\in\bN_0^2$.
One has
\[
a_\mu=a_{(d_1,d_2)},
\qquad
b_\mu=b_{(d_1,d_2)}.
\]
Hence
\[
\mathfrak F_\mu
=
\operatorname{sp}
\left(
T_{2,a_\mu Q+b_\mu P}\big|_{H_{2,d_1}}
\right)
\times
\operatorname{sp}
\left(
T_{2,a_\mu P+b_\mu Q}\big|_{H_{2,d_2}}
\right).
\]
Both factors are finite sets. Thus, the fibers over
$\bN_0^2\subset M(\Tkqr)$ are finite.

\medskip

\noindent
\textbf{2. The stratum $M((d,\infty))$.}
Suppose that
$\mu\in M((d,\infty))$.
If $\ka^\nu\to\mu$, then $\ka_1^\nu=d$ eventually and
$\ka_2^\nu\to\infty$. Using the asymptotics of the Beta function (see Example~\ref{ex:example2-asymptotics}),
\[
a_\mu=0,
\qquad
b_\mu=1.
\]
Consequently,
\[
\widehat c_{1,\mu}=P,
\qquad
\widehat c_{2,\mu}=Q.
\]
The first coordinate of the stratum is finite, whereas the second is
infinite. Therefore,
\[
\mathfrak F_\mu
=
\operatorname{sp}
\left(
T_{2,P}\big|_{H_{2,d}}
\right)
\times
\{z\in\bC:|z|\leq 1/2\}.
\]

\medskip

\noindent
\textbf{3. The stratum $M((\infty,d))$.}
Similarly, if $\mu\in M((\infty,d))$, then
\[
\mathfrak F_\mu
=
\{z\in\bC:|z|\leq 1/2\}
\times
\operatorname{sp}
\left(
T_{2,P}\big|_{H_{2,d}}
\right).
\]

\medskip

\noindent
\textbf{4. The stratum $M((\infty,\infty))$.}
Finally, let
$\mu\in M((\infty,\infty))$ and put
\[
a=a_\mu,
\qquad
b=b_\mu.
\]
By \eqref{eq:final-example-local-symbols},
\[
\widehat c_{1,\mu}=aQ+bP,
\qquad
\widehat c_{2,\mu}=aP+bQ.
\]
In this case, the fibers may vary even within the single stratum
$M((\infty,\infty))$. To make the parameter explicit, consider the sequence
\[
s_\ka=\frac{\ka_1+2}{|\ka|+5}
=\gamma_{r_1^2,0}(\ka),
\]
which belongs to $\Tkqr$, and set $\theta=\psi_\mu(s_\bullet)\in[0,1]$.
Since $s_\ka-\ka_1/|\ka|\to0$ as $|\ka|\to\infty$, every net
$\ka^\nu\to\mu$ satisfies
\[
\frac{\ka_1^\nu}{|\ka^\nu|}\longrightarrow\theta.
\]
The asymptotics of the Beta function therefore give
\[
a_\mu=\theta^\alpha,
\qquad
b_\mu=(1-\theta)^\beta.
\]
Thus,
\[
D_1(\mu,c_1)
=
\left\{
\big(
\theta^\alpha+(1-\theta)^\beta
\big)x
+
i\theta^\alpha y:
x^2+y^2\leq\frac14
\right\},
\]
whereas
\[
D_2(\mu,c_2)
=
\left\{
\big(
\theta^\alpha+(1-\theta)^\beta
\big)x
+
i(1-\theta)^\beta y:
x^2+y^2\leq\frac14
\right\}.
\]
Both displayed sets are filled ellipses, possibly degenerating to intervals.
They are convex and hence polynomially convex. Consequently,
\[
\mathfrak F_\mu=D_1(\mu,c_1)\times D_2(\mu,c_2).
\]

\section*{Acknowledgements}

This work was supported by SECIHTI through the program
\emph{Estancias posdoctorales por México}, CVU 860740.

\printbibliography

\section*{Author information}

\noindent\textbf{Miguel Angel Rodriguez Rodriguez}\\
Escuela Superior de Física y Matemáticas, Instituto Politécnico Nacional\\
Ciudad de México, C.P. 07738, Mexico\\
\href{mailto:miarodriguezro@ipn.mx}{miarodriguezro@ipn.mx}\\
\href{https://orcid.org/0000-0002-5124-8271}{ORCID: 0000-0002-5124-8271}

\end{document}